\newcommand{\ubar}[1]{\underaccent{\bar}{#1}}
\newtheorem{theorem}{Theorem}[section]
\newtheorem{remark}{Remark}[section]
\newtheorem{example}{Example}[section]
\newtheorem{definition}{Definition}[section]
\newtheorem{lemma}[theorem]{Lemma}
\newtheorem{pro}[theorem]{Proposition}
\newtheorem{coro}[theorem]{Corollary}
\def\be{\begin{eqnarray}}
	\def\ee{\end{eqnarray}}
\def\ba{\begin{aligned}}
	\def\ea{\end{aligned}}
\def\bay{\begin{array}}
	\def\eay{\end{array}}
\def\bca{\begin{cases}}
	\def\eca{\end{cases}}
\def\bt{\begin{theorem}}
	\def\et{\end{theorem}}
\def\bc{\begin{corollary}}
	\def\ec{\end{corollary}}
\def\bl{\begin{lemma}}
	\def\el{\end{lemma}}
\def\bp{\begin{proposition}}
	\def\ep{\end{proposition}}
\def\br{\begin{remark}}
	\def\er{\end{remark}}
\def\bd{\begin{definition}}
	\def\ed{\end{definition}}
\def\bpf{\begin{proof}}
	\def\epf{\end{proof}}
\def\bex{\begin{example}}
	\def\eex{\end{example}}
\def\bq{\begin{question}}
	\def\eq{\end{question}}
\def\bas{\begin{assumption}}
	\def\eas{\end{assumption}}
\def\ber{\begin{exercise}}
	\def\eer{\end{exercise}}
\def\mb{\mathbb}
\def\mbR{\mb{R}}
\def\mbT{\mb{T}}
\def\mbZ{\mb{Z}}
\def\mc{\mathcal}
\def\mcS{\mc{S}}
\def\mcH{\mathcal{H}}
\def\Div{{\rm div}}
\def\be{\mathbf{e}}
\def\bi{\mathbf{i}}
\def\bu{\mathbf{u}}
\def\bv{\mathbf{v}}
\def\bV{\mathbf{V}}
\def\bbf{\mathbf{f}}
\def\bn{\mathbf{n}}
\def\mfk{\mathfrak{k}}
\def\rone{{\rm\uppercase\expandafter{\romannumeral 1}}}
\def\rtwo{{\rm\uppercase\expandafter{\romannumeral 2}}}
\def\thr{{\rm\uppercase\expandafter{\romannumeral 3}}}
\DeclareMathOperator{\sgn}{sgn}
\def\XXint#1#2#3{{\setbox0=\hbox{$#1{#2#3}{\int}$ }
		\vcenter{\hbox{$#2#3$ }}\kern-.6\wd0}}
\subjclass{ 35B06, 35Q31, 35B53, 35J61, 76B03.}
\keywords{Steady Euler equations, rigidity, stability, total curvature, flow angle, stagnation.}
\begin{document}


\title[Steady solutions to Euler equations]{On a classification of steady solutions to two-dimensional Euler equations}

\author{Changfeng Gui}
\address{Department of Mathematics,  University  of  Macau,  Macau  SAR,  P. R. China}
\email{changfenggui@um.edu.mo}

\author{Chunjing Xie}
\address{School of Mathematical Sciences, Institute of Natural Sciences, Ministry of Education Key Laboratory of Scientific and Engineering Computing, CMA-Shanghai, Shanghai Jiao Tong University, Shanghai 200240, China.}
\email{cjxie@sjtu.edu.cn}

\author{Huan Xu}
\address{Department of Mathematics, The University of Texas at San Antonio, San Antonio, TX 78249, USA}
\email{huan.xu@utsa.edu}

\date{}

\maketitle

\begin{abstract}
In this paper, we provide a classification of steady solutions to two-dimensional incompressible Euler equations in terms of the set of flow angles. The first main result asserts that the set of flow angles of any bounded steady flow in the whole plane must be the whole circle unless the flow is a parallel shear flow. In an infinitely long horizontal strip or the upper half-plane supplemented with slip boundary conditions, besides the two types of flows appeared in the whole space case, there exists an additional class of steady flows for which the set of flow angles is either the upper or lower closed semicircles. This type of flows is proved to be the class of non-shear flows that have the least total curvature. As applications, we obtain Liouville-type theorems for two-dimensional semilinear elliptic equations with only bounded and measurable nonlinearity, and the structural stability of shear flows whose all stagnation points are not inflection points, including Poiseuille flow as a special case. Our proof relies on the analysis of some quantities related to the curvature of the streamlines.
\end{abstract}


\section{Introduction and main results}

In this paper, we study steady solutions to the Euler system in a domain $\Omega\subset\mbR^2$
\begin{eqnarray}\label{Euler}
\left\{\begin{aligned}
&\bv\cdot\nabla\bv+\nabla P=0, & {\rm in}\ \Omega,\\
&\Div\bv=0, & {\rm in}\ \Omega.
\end{aligned}\right.
\end{eqnarray}
Here $\bv=(v_1,v_2)$ is the velocity field and $P$ is the pressure. We are mainly concerned with three unbounded domains: the whole plane $\mbR^2$, the half-plane $\mbR^2_+\vcentcolon=\mbR\times(0,\infty)$, and the infinitely long strip $\Omega_\infty\vcentcolon=\mbR\times(-1,1)$. Since we shall be working in the class of continuous functions, the results obtained in this paper will also apply to steady flows on the torus $\mbT^2$, the $x_1$-periodic half-plane $\mbT\times(0,\infty)$, and the $x_1$-periodic strip $\mbT\times(-1,1)$, where $\mbT=\mbR/2\pi\mbZ$. If $\Omega$ is either $\Omega_\infty$ or $\mbR_+^2$ , the flows are supplemented with the slip boundary condition
\begin{align}\label{slip boundary condition}
v_2=0\ \ {\rm on}\ \partial\Omega.
\end{align}

The Euler system \eqref{Euler} has two important classes of solutions with symmetry: the parallel shear flows (i.e., the flows whose streamlines consist of parallel lines) and the circular flows (i.e., the flows whose streamlines consist of concentric circles). Then a natural question is whether the flows have certain rigidity when the domain enjoys some symmetric property.  For example, in the aforementioned domains, one investigates the conditions that lead to steady Euler flows being shear flows. The rigidity issue has attracted significant attention in mathematical research, since it has close relationship with stability and asymptotic properties of the solutions (see \cite{CDG, CEW,LZ}). 

In a series of works \cite{HN,HN2,HN3}, Hamel and Nadirashvili thoroughly studied the rigidity of steady Euler flows in various domains with symmetry. One of their main results asserts that steady flows strictly away from stagnation points are rigid. The key point in \cite{HN,HN3} is to show that the stream functions of the flows are governed by semilinear elliptic equation with {\it Lipschitz continuous} nonlinearity, thereby enabling the application of classical elliptic techniques (\hspace{1sp}\cite{BCN,BN,GNN}) to prove symmetry. On the other hand, many flows of physical importance, such as Couette flows, Poiseuille flows,  Kolmogrov flows, and the vortex patch, etc., have stagnation points (cf. \cite{DR, ElgindiHuang}). 
Furthermore, the stagnation points of the flows play important role in understanding the singularity and growth bounds of incompressible Euler flows, one may refer to \cite{HouLuo, KS, Denisov} and references therein. 
Although the stagnation points produce some technical difficulties, 
the strategy of using semilinear elliptic equations has also proven successful in comprehending certain local structures of steady Euler flows with stagnation points (see \cite{N}). Combining the methods of moving plane and calculus of variations,
the Liouville type theorem for Poiseuille flows in a strip and the existence of solution for Euler equations in general nozzles was established in \cite{LLSX}, where the regularity of boundary of stagnation region was also achieved with the aid of analysis for obstacle type free boundaries. One may refer to \cite{R,WZ} for some recent study on circular flows with stagnation points. 
It was shown in \cite{GPSY} that any compactly supported and nonnegative vorticity of the smooth steady Euler flow must be radially symmetric up to a translation. 

We should mention that in contrast to rigidity,  the Euler system \eqref{Euler} in a symmetric domain may have many solutions without obvious symmetry. Therefore, it is also important to construct nontrivial steady solutions to \eqref{Euler}. 
In this direction, one may refer to \cite{CL,CDG,CEW,FMM,LZ} and references therein.

In this paper, we try to classify the steady Euler flows with possible stagnation points in the plane, half plane, or strip from a geometric point view.  The flows are classified based on their set of flow angles. Specifically, we show that the set of flow angles of any bounded steady flow in $\mbR^2$ must be the whole circle unless the flow is a parallel shear flow. In $\Omega_\infty$ or $\mbR_+^2$, due to the presence of boundaries, there exists one more class of steady flows for which the set of flow angles is either the upper or lower closed semicircles.

\subsection{Main theorems}
For a vector field $\bu$ defined in a domain $\Omega$, the set of flow angles of $\bu$ in $\Omega$ is defined as
\begin{align*}
\Theta(\bu;\Omega)\vcentcolon=\left\{\frac{\bu(x)}{|\bu(x)|}\Big|\, x\in \Omega, |\bu(x)|>0\right\},
\end{align*}
which is a subset of the circle $S^1$. For $\theta_0\in\mbR$ and $\beta\in(0,\pi)$, denote
\[
\mcS_{\theta_0,\beta}=\left\{(\cos\theta,\sin\theta)\in S^1| -\pi+\beta+\theta_0\le\theta\le \pi-\beta+\theta_0\right\},
\]
which is a closed arc segment on $S^1$ with length $2\pi-2\beta$. Later on, for brevity of notation, let $S^1_+=\mcS_{\pi/2,\pi/2}$ and $S^1_-=\mcS_{-\pi/2,\pi/2}$ be the upper and lower closed semicircles, respectively. 

Our first main theorem can be stated as follows.

\begin{theorem}\label{rigidity in the plane}
Suppose that $\bv=(v_1,v_2)\in H^1_{loc}(\mbR^2)\cap C(\mbR^2)$ is a solution of \eqref{Euler}. Assume there exists a constant $C>0$ such that 
\begin{align}\label{growth at far fields}
\sup_{|x|\le R}|\bv(x)|\le C \sqrt{\log R}\ \ \text{for all}\ R>100.   
\end{align}
Then the following statements hold:

{\rm (\romannumeral1)} If there exist $\theta_0\in\mbR$ and $\beta\in(0,\pi)$ such that $\Theta(\bv;\mbR^2)\subset \mcS_{\theta_0,\beta}$, then $\bv$ is a shear flow.

{\rm (\romannumeral2)} If $\bv\in W^{2,1}_{loc}(\mbR^2)$, then either $\bv$ is a shear flow, or $\Theta(\bv;\mbR^2)= S^1$.
\end{theorem}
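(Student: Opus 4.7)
The plan is to reduce part (ii) to part (i): the $W^{2,1}_{loc}$ assumption, together with an open-mapping or Sard-type analysis of the stream function, should allow one to show that if $\Theta(\bv;\mbR^2)$ is a proper subset of $S^1$, then it actually avoids an open arc, so $\Theta\subset\mcS_{\theta_0,\beta}$ for some $\beta\in(0,\pi)$, and part (i) then delivers the conclusion.

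For part (i), I would rotate coordinates so that $\theta_0=0$, reducing the cone condition to $\arg\bv(x)\in[-(\pi-\beta),\pi-\beta]$ whenever $\bv(x)\ne 0$. Since $\mbR^2$ is simply connected and $\Div\bv=0$, define a stream function $\psi\in H^2_{loc}(\mbR^2)$ with $\nabla^\perp\psi=\bv$. The vorticity $\omega=-\Delta\psi$ is transported by $\bv$, i.e.\ $\bv\cdot\nabla\omega=0$ in the weak sense, and the Bernoulli quantity $B=\tfrac12|\bv|^2+P$ is constant along streamlines via the Crocco-type identity $\nabla B=\omega\bv^\perp$.

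The heart of the argument should exploit the curvature of streamlines, as the abstract promises. Where $|\bv|>0$, introduce the flow-angle function $\theta(x)$; along any streamline $d\theta/ds$ equals the signed streamline curvature and is computable from $\nabla P$ via the Euler equations. The cone condition pins $\theta(x)$ to an interval of length $2(\pi-\beta)<2\pi$, so the total angular variation along any streamline is strictly less than $2\pi$. This prevents streamlines from closing up or spiraling and forces each to be a noncompact curve escaping to infinity in both directions. Combining this global picture with the constancy of $\omega$ and $B$ along streamlines, one obtains a measurable function $F$ on the range of $\psi$ with $\omega=F(\psi)$ almost everywhere, and hence the semilinear equation
\begin{equation*}
-\Delta\psi=F(\psi)\quad\text{in }\mbR^2.
\end{equation*}
A moving-plane / sliding argument in the $x_1$-direction, in the spirit of Berestycki--Caffarelli--Nirenberg and the Hamel--Nadirashvili program but adapted to merely bounded measurable $F$, together with the growth bound \eqref{growth at far fields}, then forces $\psi$ to depend only on $x_2$, whence $\bv$ is a shear flow.

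The main obstacle will be the presence of stagnation points (zeros of $\bv$), where $\nabla\psi$ vanishes: there the flow angle is undefined, $F$ is only measurable and bounded (not Lipschitz, unlike the Hamel--Nadirashvili setting in \cite{HN,HN3}), and connectedness of level sets of $\psi$ is not automatic. Overcoming this demands genuinely using the cone condition --- which controls the global topology of streamlines --- to run a moving-plane argument for a semilinear PDE with merely bounded measurable nonlinearity. The $\sqrt{\log R}$ growth imposed by \eqref{growth at far fields} is presumably sharp for this Liouville-type step in two dimensions.
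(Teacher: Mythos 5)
Your proposal diverges fundamentally from the paper's argument, and both of its key steps have genuine gaps. First, the reduction of (ii) to (i) rests on the claim that if $\Theta(\bv;\mbR^2)\subsetneqq S^1$ then the set of flow angles must avoid an open arc; this is false in general (nothing prevents $\Theta$ from being $S^1$ minus a single point, and no Sard-type argument on the stream function is offered that would rule this out). The paper does not make this reduction: in part (ii) only one direction $-\be$ is assumed missing, and the extra $W^{2,1}_{loc}$ regularity is used for a completely different purpose, namely to make the error term $\varepsilon\int\phi^2\theta_\varepsilon\Delta v_2\,dx$ vanish trivially as $\varepsilon\to0$ (whereas in part (i) the full cone condition is needed to get the pointwise lower bound $|\bv_\varepsilon|\ge c\varepsilon$ that controls this term). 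Second, and more seriously, your plan for part (i) funnels everything into ``a moving-plane argument for a semilinear PDE with merely bounded measurable nonlinearity,'' which is exactly the step that is not available in the literature and that the paper is designed to avoid; you name it as the main obstacle but give no mechanism for overcoming it. Even upstream of that, with only $\bv\in H^1_{loc}\cap C$ the vorticity is merely $L^2_{loc}$, so the transport identity $\bv\cdot\nabla\omega=0$, the Crocco relation, and the representation $\omega=F(\psi)$ are not justified at the stated regularity, and connectedness of level sets through stagnation points is precisely where such constructions break down.

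The paper's actual proof never passes to a semilinear equation. It regularizes the flow angle by setting $\bv_\varepsilon=(v_1+\varepsilon,v_2)$ and $\theta_\varepsilon=\angle(\bi,\bv_\varepsilon)$, observes that the divergence form of the Euler system gives $\Div(|\bv_\varepsilon|^2\nabla\theta_\varepsilon)=\varepsilon\Delta v_2$, and tests this with $\phi_{log,R}^2\,\theta_\varepsilon$. The growth bound \eqref{growth at far fields} makes $\int|\bv|^2|\nabla\phi_{log,R}|^2\,dx$ uniformly bounded, and an Ambrosio--Cabr\'e-type absorption argument (inequality \eqref{AC trick}) first shows the total curvature $\int|v_1\nabla v_2-v_2\nabla v_1|^2|\bv|^{-2}\,dx$ is finite and then, letting $R\to\infty$, that it vanishes; Lemma \ref{trivial curvature lemma} then yields the shear structure. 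If you want to salvage your approach you would need either a correct topological argument producing a missing open arc in case (ii), or a direct treatment of the one-missing-direction case; and for (i) you would need to replace the moving-plane step by an energy/capacity argument of the above type, since the nonlinearity $F$ genuinely fails to be Lipschitz (see Example \ref{discontinuous nonlinearity example}).
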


There are a few remarks for Theorem \ref{rigidity in the plane}.
\begin{remark}
The results in Theorem~\ref{rigidity in the plane} are no longer valid if $\bv$ grows too fast at far fields. For example, let
\[
\bv\vcentcolon=(-e^{x_1},x_2 e^{x_1})\ \ {\rm and}\ \ P\vcentcolon=-\frac12e^{2x_1}.
\]
Then $(\bv, P)$ is a smooth solution of $\eqref{Euler}$ in $\mbR^2$. Although it satisfies $v_1<0$ so that $\Theta(\bv, \mathbb{R}^2)\subset \mcS_{\pi, \pi/2}$, it is not a shear flow.
\end{remark}

\begin{remark}

As a consequence of Theorem~\ref{rigidity in the plane}, we can prove Liouville-type theorems for a class of semilinear elliptic equations with only {\it bounded and measurable} nonlinearity (see Subsection~\ref{Liouville SEE}). But requiring a $W^{2,1}$ Sobolev regularity for $\bv$ may result in the non-existence of solutions to these elliptic equations (see Example~\ref{discontinuous nonlinearity example}). So we also establish Theorem~\ref{rigidity in the plane} {\rm (\romannumeral1)} (and Theorem~\ref{rigidity in a strip} {\rm (\romannumeral1)}) in which less regularity of $\bv$ is required. 
    
\end{remark}


Theorem~\ref{rigidity in the plane} {\rm (\romannumeral2)} classifies steady flows in $\mbR^2$ into two categories based on the set of flow angles. When the physical boundary appears, the steady flows of the Euler system may have richer structures.  The following theorem provides a classification of steady solutions to the Euler system in the half plane and in a strip.

\begin{theorem}\label{rigidity in a strip}
Let $\Omega=\Omega_\infty$ or $\Omega=\mbR_+^2$. Suppose that $\bv=(v_1,v_2)\in H^1_{loc}(\Bar{\Omega})\cap C(\Bar{\Omega})$ is a solution of \eqref{Euler} satisfying \eqref{slip boundary condition}. Assume that there exists a constant $C>0$ such that
\begin{align}\label{growth at horizontal far fields}
\sup_{[-R,R]\times[-1,1]}|\bv|\le C \sqrt{R},\ \ for\ all\ R>100
\end{align}
in the case  $\Omega=\Omega_\infty$, or
\begin{align}\label{growth at halfplane far fields}
\sup_{\{x\in\mbR_+^2:|x|\le R\}}|\bv|\le C \sqrt{\log R},\ \ \text{for all}\ R>100  
\end{align}
in the case $\Omega=\mbR_+^2$.
Then the following statements hold.

{\rm (\romannumeral1)} If there exists a number $\beta\in(0,\pi)$ such that
\[
\Theta(\bv;\Bar{\Omega})\subset \mcS_{0,\beta},\ or\ \Theta(\bv;\Bar{\Omega})\subset \mcS_{\pi,\beta},
\]
then $\bv$ is a shear flow, that is, $\bv\equiv(v_1(x_2),0)$.

{\rm (\romannumeral2)} If additionally $\bv\in C^1(\Bar{\Omega})\cap W^{2,1}_{loc}(\Bar{\Omega})$, and $v_1|_{\partial\Omega}\in L^\infty(\partial\Omega)$, then exactly one of the following three cases holds:
\begin{enumerate}
\item[(a)] $\bv$ is a shear flow;

\item[(b)] $\Theta(\bv;\Bar{\Omega})=S^1$;

\item[(c)] either $\Theta(\bv;\Bar{\Omega})=S^1_+$ or $\Theta(\bv;\Bar{\Omega})=S^1_-$.
\end{enumerate}

{\rm (\romannumeral3)}
Assume additionally that $\bv\in C^1(\Bar{\Omega})\cap W^{2,1}_{loc}(\Bar{\Omega})$ is $x_1$-periodic, i.e. there exists a number $T>0$ such that
\[
\bv(x_1+T,x_2) =\bv(x_1,x_2) \quad \text{for all}\,\, (x_1,x_2)\in \Omega.
\]
Then it holds that either $\bv$ is a shear flow or $\Theta(\bv;\Omega)=S^1$.
\end{theorem}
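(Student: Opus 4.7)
My strategy is to invoke Theorem~\ref{rigidity in a strip}~(ii) to reduce to three alternatives---(a) shear flow, (b) $\Theta(\bv;\bar{\Omega})=S^1$, and (c) $\Theta(\bv;\bar{\Omega})\in\{S^1_+,S^1_-\}$---and then use $x_1$-periodicity to rule out (c). Note that the $x_1$-periodicity and continuity of $\bv$, combined with the given growth hypotheses on vertical strips, make the hypotheses of (ii) applicable.

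Without loss of generality, assume we are in the subcase of (c) with $\Theta(\bv;\bar{\Omega})=S^1_+$, so that $v_2(x)\ge 0$ for every $x\in\bar{\Omega}$ (the inequality extends to the zero set of $\bv$ by continuity). The key observation is that the mean vertical flux over one period,
\[
F(x_2)\vcentcolon=\int_0^T v_2(x_1,x_2)\,dx_1,
\]
is independent of $x_2$: differentiating under the integral and using $\partial_2 v_2=-\partial_1 v_1$ together with periodicity gives
\[
F'(x_2)=-\int_0^T\partial_1 v_1(x_1,x_2)\,dx_1=-\bigl(v_1(T,x_2)-v_1(0,x_2)\bigr)=0.
\]
The slip boundary condition \eqref{slip boundary condition} pins $F\equiv F(\pm 1)=0$ in the strip case, or $F\equiv F(0)=0$ in the half-plane case. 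Since $v_2$ is continuous and nonnegative on each horizontal line and its integral over one period vanishes, we conclude $v_2\equiv 0$ in $\bar{\Omega}$.

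Incompressibility then forces $\partial_1 v_1\equiv 0$, so $v_1=v_1(x_2)$ and $\bv$ is a shear flow. But a shear flow has $\Theta(\bv;\bar{\Omega})\subset\{(\pm 1,0)\}$, which is incompatible with $\Theta(\bv;\bar{\Omega})=S^1_+$. Hence case~(c) cannot occur under $x_1$-periodicity, leaving only (a) or (b), which is the conclusion. The substantive work is really carried out in part~(ii); the present reduction is a short flux-balance argument, and I expect the main (minor) obstacle to be only the careful bookkeeping to verify that the growth and boundary hypotheses of (ii) do follow from the $x_1$-periodic setup in both the strip and the half-plane.
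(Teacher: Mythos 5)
Your proposal addresses only part (iii) of the theorem, taking part (ii) as given; since parts (i) and (ii) carry essentially all of the analytic content (the finiteness of the total curvature, the sharp lower bound of Proposition~\ref{theorem of best constant}, and the characterization of its equality cases), the bulk of the statement is left unproved. Even restricted to part (iii), there is a genuine gap: part (ii) is a trichotomy for $\Theta(\bv;\Bar{\Omega})$, so after your flux-balance argument eliminates case (c) you obtain only ``shear flow or $\Theta(\bv;\Bar{\Omega})=S^1$''. The conclusion of (iii) is the strictly stronger \emph{interior} statement $\Theta(\bv;\Omega)=S^1$ (the remark following the theorem stresses exactly this point). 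Because $v_2=0$ on $\partial\Omega$, the boundary can only contribute the directions $\pm\bi$ to the angle set, so $\Theta(\bv;\Bar{\Omega})=S^1$ is a priori compatible with, say, $\Theta(\bv;\Omega)=S^1\setminus\{\bi\}$, a configuration that part (ii) does not exclude. Your reduction therefore proves a weaker assertion than (iii).

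For comparison, the paper does not route (iii) through (ii) at all: it applies Lemma~\ref{finite total curvature lemma when missing one direction}, whose hypothesis $\Theta(\bv;\Omega)\subsetneqq S^1$ concerns only the interior, to conclude that the total curvature is finite; $x_1$-periodicity of the integrand then forces the total curvature to vanish (an infinite sum of equal nonnegative period-cell contributions is finite only if each vanishes), and Lemma~\ref{trivial curvature lemma} gives the shear conclusion. That said, the parts of your argument that are present are sound: the computation showing $\int_0^T v_2(x_1,x_2)\,dx_1$ is independent of $x_2$ and hence identically zero is correct and gives a clean elementary exclusion of $\Theta(\bv;\Bar{\Omega})=S^1_{\pm}$ under periodicity, and your observation that periodicity plus continuity supplies $v_1|_{\partial\Omega}\in L^\infty(\partial\Omega)$ is also right. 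To repair the proposal you would need to supply proofs of (i) and (ii) and to replace the appeal to (ii) in part (iii) by an argument sensitive to the interior angle set, e.g.\ the finite-total-curvature route above.
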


\begin{remark}
The conclusion in Theorem~\ref{rigidity in a strip} {\rm (\romannumeral3)} asserts that the set of flow angles of any non-shear flow in the interior region is the whole circle, which is better than that $\Theta(\bv;\Bar{\Omega})=S^1$. 
\end{remark}

In the rest of the paper, we call the flows belonging to Case (c) in Theorem~\ref{rigidity in a strip} (ii) Type {\thr}. The existence of Type {\thr} flows is guaranteed by the following theorem.

\begin{theorem}\label{existence_thm}
Let $\Omega=\Omega_\infty$ or $\Omega=\mbR_+^2$. There exists a bounded Type {\thr} flow $\bv\in C^\infty(\Bar{\Omega})$, i.e., there exists a bounded solution $\bv\in C^\infty(\Bar{\Omega})$ to \eqref{Euler} and \eqref{slip boundary condition}, which satisfies either $\Theta(\bv;\Bar{\Omega})=S^1_+$ or $\Theta(\bv;\Bar{\Omega})=S^1_-$.   
\end{theorem}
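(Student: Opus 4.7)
\textbf{Proof plan for Theorem~\ref{existence_thm}.}
The strategy is to produce, in the strip $\Omega_\infty$, a non-shear steady Euler flow whose stream function is globally monotone in $x_1$, and then handle the half-plane case by an analogous (or rescaled) construction. Writing $\mathbf{v}=\nabla^\perp\psi=(-\partial_{x_2}\psi,\partial_{x_1}\psi)$, the steady Euler system reduces, at the regularity at hand, to a semilinear elliptic equation $-\Delta\psi=F(\psi)$ in $\Omega_\infty$ for a smooth $F$ to be chosen, while the slip boundary condition becomes $\psi\equiv c_\pm$ on $x_2=\pm1$. Since $v_2=\partial_{x_1}\psi$, the sought inclusion $\Theta(\mathbf{v};\bar\Omega_\infty)\subset S^1_+$ is equivalent to $\partial_{x_1}\psi\ge 0$.

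The main step is a heteroclinic-front construction. I would pick a smooth bistable-type nonlinearity $F$ such that the one-dimensional problem
\[
-\Psi''(x_2)=F(\Psi(x_2))\ \ \text{on}\ (-1,1),\qquad \Psi(\pm1)=0,
\]
admits two distinct ordered solutions $\Psi^-<\Psi^+$, corresponding to two inequivalent shear stream functions whose velocity profiles $-\Psi^{\pm\prime}$ can be arranged to take values of both signs at the boundary points $x_2=\pm1$. Using $\Psi^-$ and $\Psi^+$ as globally ordered sub- and super-solutions, either a monotone iteration / sliding argument, or minimization of the associated elliptic energy on truncated strips $(-N,N)\times(-1,1)$ with matching Dirichlet data at $x_1=\pm N$ (followed by a standard $N\to\infty$ limit with uniform gradient estimates), produces a solution $\psi\in C^\infty(\bar\Omega_\infty)$ of $-\Delta\psi=F(\psi)$, non-decreasing in $x_1$, vanishing on $x_2=\pm 1$, and satisfying $\psi(x_1,\cdot)\to\Psi^\mp$ as $x_1\to\mp\infty$. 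Strict monotonicity $\partial_{x_1}\psi>0$ in the interior follows from the strong maximum principle applied to the linear elliptic equation satisfied by $\partial_{x_1}\psi$, and boundedness of $\mathbf{v}$ follows from the envelope $\Psi^-\le\psi\le\Psi^+$ combined with interior $C^{1,\alpha}$ estimates and $x_1$-translation invariance.

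To finish, I would verify that $\Theta(\mathbf{v};\bar\Omega_\infty)$ is exactly $S^1_+$. The inclusion $\Theta\subset S^1_+$ is immediate from $v_2\ge0$, and the flow is non-shear because $\Psi^-\not\equiv\Psi^+$. The horizontal directions $(\pm 1,0)$ are captured by sending $x_1\to\pm\infty$ along boundary points at which $-\Psi^{\pm\prime}$ takes opposite signs; the remaining angles in $S^1_+$ are then realised by the intermediate value theorem applied to the argument of $\mathbf{v}/|\mathbf{v}|$ along a continuous path from the interior (where $v_2>0$) to the boundary. For the half-plane $\mathbb{R}^2_+$, I would repeat the construction with $(-1,1)$ replaced by $(0,\infty)$ using a nonlinearity $F$ admitting two ordered decaying one-dimensional profiles, or else extract a normal-family limit from strips of width $R\to\infty$ with uniform $C^{1,\alpha}$ bounds. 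The principal obstacle is two-fold: first, ensuring the monotone heteroclinic is nondegenerate so that $\partial_{x_1}\psi\not\equiv 0$ (handled by the sliding/maximum-principle argument above); second, arranging $F$ and the limit profiles so that both horizontal directions $(\pm1,0)$ genuinely lie in $\Theta$, thereby giving the sharp identity $\Theta=S^1_+$ rather than a proper subarc.
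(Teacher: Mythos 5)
The central step of your plan --- the existence of a bounded, monotone-in-$x_1$ heteroclinic $\psi$ joining two \emph{distinct} ordered profiles $\Psi^-<\Psi^+$ of the cross-sectional Dirichlet problem --- is asserted rather than established, and the two mechanisms you offer do not deliver it. Monotone iteration between $\Psi^-$ and $\Psi^+$ converges to the minimal (resp.\ maximal) solution in the order interval, which is $\Psi^-$ (resp.\ $\Psi^+$) itself, since both endpoints are exact solutions; and unconstrained minimization on truncated strips $(-N,N)\times(-1,1)$ lets the transition layer drift to $x_1=\pm\infty$, so the $N\to\infty$ limit is again a shear flow unless the front is pinned by some normalization. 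Worse, a heteroclinic with $\psi(x_1,\cdot)\to\Psi^{\mp}$ as $x_1\to\mp\infty$ can exist only if the two limit profiles have equal cross-sectional energy $\int_{-1}^{1}\bigl(-\tfrac12(\Psi^{\pm\prime})^2+\mathcal F(\Psi^{\pm})\bigr)\,dx_2$ with $\mathcal F'=F$ (Gui's Hamiltonian identity \cite{Gui0}; the same constraint is visible in the paper's identity \eqref{boundary asymptotics in a strip} via Bernoulli's law). Your choice of $F$ never imposes this compatibility, so for a generic nonlinearity with two ordered positive profiles no such front exists. The ``sliding/maximum-principle argument'' you invoke for nondegeneracy only upgrades $\partial_{x_1}\psi\ge0$, $\not\equiv0$ to $\partial_{x_1}\psi>0$; it cannot rule out $\partial_{x_1}\psi\equiv0$. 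Finally, the half-plane case is essentially unaddressed: there the cross-section is $(0,\infty)$ and the required object is genuinely two-dimensional.

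For comparison, the paper resolves exactly these difficulties by a symmetrization device: it takes $F$ \emph{odd} and connects $\Psi^{\pm}=\pm\bar u$, where $\bar u$ is the unique positive one-dimensional profile of Lemma \ref{lemma of existence and uniqueness of 1d solution}, so the energy balance is automatic; the front is pinned by solving on the half-strip $(0,\infty)\times(-1,1)$ with zero Dirichlet data on $\{x_1=0\}$ (sub/supersolution pair: a small compactly supported bump and $\bar u$) and then reflecting oddly in $x_1$, which forces $x_1u>0$ for $x_1\ne0$ and hence non-triviality. For $\mbR^2_+$ it uses the Dang--Fife--Peletier saddle solution of the Allen--Cahn equation on the quadrant, again oddly reflected. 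If you specialize your scheme to odd $F$ with $\Psi^{\pm}=\pm\bar u$ and add this reflection/pinning step, your outline becomes the paper's proof; as written, the existence of the front is a genuine gap.
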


\subsection{Some Consequences}\label{Liouville SEE}
We introduce two consequences of the classification results obtained in Theorems~\ref{rigidity in the plane} and \ref{rigidity in a strip}.

First, it is well known that if $u$ is a solution to a semilinear elliptic equation in two dimensions, say 
\begin{equation}\label{elleq1}
  \Delta u=F'(u),  
\end{equation}
then $(\bv,P)\vcentcolon=(\nabla^\perp u,F(u)-\frac{1}{2}|\nabla u|^2)$ is a solution to the Euler system \eqref{Euler}, where $\nabla^\perp \vcentcolon=(-\partial_{x_2},\partial_{x_1})$. Furthermore, $u$ is a one-dimensional solution if and only if $\bv$ is a shear flow. Therefore, the following Liouville-type theorems for semilinear elliptic equations follow directly from Theorem~\ref{rigidity in the plane}.

\begin{coro}
Let $F:\mbR\rightarrow\mbR$ be a locally Lipschitz continuous function. Suppose that $u\in H^{2}_{loc}(\mbR^2)\cap C^1(\mbR^2)$ solves \eqref{elleq1} in $\mbR^2$.
Assume there exists a constant $C>0$ such that 
\begin{align*}
\sup_{|x|\le R}|\nabla u(x)|\le C \sqrt{\log R}\quad {\rm for}\ {\rm all}\ R>100.   
\end{align*}
Then $u$ is a one-dimensional solution, that is, $u=u(\be\cdot x)$ for some unit vector $\be\in\mbR^2$, if one of the following is satisfied:

{\rm (\romannumeral1)} There exist $\theta_0\in\mbR$ and $\beta\in(0,\pi)$ such that $\Theta(\nabla u;\mbR^2)\subset \mcS_{\theta_0,\beta}$.

{\rm (\romannumeral2)} $u\in W^{3,1}_{loc}(\mbR^2)$ and $\Theta(\nabla u;\mbR^2)\subsetneqq S^1$.
\end{coro}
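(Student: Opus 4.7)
The plan is to translate the corollary directly into Theorem~\ref{rigidity in the plane} via the standard stream-function correspondence $\bv\vcentcolon=\nabla^\perp u$, $P\vcentcolon=F(u)-\frac12|\nabla u|^2$, which is already noted in the excerpt to yield a solution of \eqref{Euler}, with $u$ one-dimensional precisely when $\bv$ is a shear flow. So the task is essentially bookkeeping: verify that the hypotheses on $u$ imply the hypotheses on $\bv$ demanded by the relevant part of Theorem~\ref{rigidity in the plane}, and translate the flow-angle condition correctly under the perpendicular rotation.

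The first step is regularity and growth. From $u\in H^2_{loc}(\mbR^2)\cap C^1(\mbR^2)$ we get $\bv=\nabla^\perp u\in H^1_{loc}(\mbR^2)\cap C(\mbR^2)$, and $|\bv(x)|=|\nabla u(x)|$, so the bound $\sup_{|x|\le R}|\nabla u|\le C\sqrt{\log R}$ is exactly \eqref{growth at far fields}. In case (ii), the extra assumption $u\in W^{3,1}_{loc}(\mbR^2)$ gives $\bv\in W^{2,1}_{loc}(\mbR^2)$, the regularity needed for Theorem~\ref{rigidity in the plane} (ii).

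The second step is translating the flow-angle sets. Since $\bv=(-\partial_{x_2}u,\partial_{x_1}u)$, the map $\nabla u(x)\mapsto \bv(x)$ is rotation by $\pi/2$; in particular, $\bv(x)/|\bv(x)|=R_{\pi/2}\bigl(\nabla u(x)/|\nabla u(x)|\bigr)$, and $\{|\nabla u|>0\}=\{|\bv|>0\}$. Hence $\Theta(\bv;\mbR^2)=R_{\pi/2}\Theta(\nabla u;\mbR^2)$, and $R_{\pi/2}\mcS_{\theta_0,\beta}=\mcS_{\theta_0+\pi/2,\beta}$. So in case (i) the assumption $\Theta(\nabla u;\mbR^2)\subset\mcS_{\theta_0,\beta}$ becomes $\Theta(\bv;\mbR^2)\subset\mcS_{\theta_0+\pi/2,\beta}$, and Theorem~\ref{rigidity in the plane} (i) forces $\bv$ to be a shear flow, i.e.\ $u=u(\be\cdot x)$ for some unit vector $\be$. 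In case (ii), $\Theta(\nabla u;\mbR^2)\subsetneqq S^1$ translates to $\Theta(\bv;\mbR^2)\subsetneqq S^1$, so Theorem~\ref{rigidity in the plane} (ii) again rules out the alternative $\Theta(\bv;\mbR^2)=S^1$, and we conclude $\bv$ is a shear flow, i.e.\ $u$ is one-dimensional.

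There is no real obstacle here beyond these verifications; the only pitfall worth flagging is confirming that the reduction truly produces a solution in the weak (distributional) sense compatible with Theorem~\ref{rigidity in the plane}, which under the stated $H^2_{loc}\cap C^1$ (resp.\ $W^{3,1}_{loc}$) hypothesis on $u$ is immediate since $\bv\cdot\nabla\bv=-\nabla P$ can be checked pointwise a.e.\ using $\Delta u=F'(u)$ and the identity $(\nabla^\perp u\cdot\nabla)\nabla^\perp u=\nabla\!\left(\tfrac12|\nabla u|^2\right)-F'(u)\nabla u$.
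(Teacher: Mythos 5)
Your reduction is correct and is exactly the paper's argument: the authors likewise obtain this corollary directly from Theorem~\ref{rigidity in the plane} by setting $\bv=\nabla^\perp u$ and $P=F(u)-\frac12|\nabla u|^2$, noting that $u$ is one-dimensional iff $\bv$ is a shear flow. Your bookkeeping of the regularity, the growth bound, and the rotation of the angle sets $\mcS_{\theta_0,\beta}\mapsto\mcS_{\theta_0+\pi/2,\beta}$ is accurate, as is the remark that the equation is to be read as $\Delta u\,\nabla u=\nabla F(u)$ a.e.
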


\begin{remark}
Note that $F'$ is only {\it bounded and measurable}, the elliptic equation should be understood in the sense that $\Delta u\nabla u=\nabla F(u)$ a.e. in $\mbR^2$.
\end{remark}

Similarly, the next result is an immediate consequence of Theorem~\ref{rigidity in a strip}.

\begin{coro}\label{Liouville in a strip}
Let $\Omega=\Omega_\infty$ or $\Omega=\mbR_+^2$, and $F:\mbR\rightarrow\mbR$ be a locally Lipschitz continuous function. Suppose that $u\in H^{2}_{loc}(\Bar{\Omega})\cap C^1(\Bar{\Omega})$ solves
\begin{equation*}
\left\{
    \begin{aligned}
&\Delta u=F'(u),\ \ in\  \Omega;\\
&\partial_{x_1}u=0,\ \ on\ \partial\Omega.       
    \end{aligned}
    \right.
\end{equation*}
Assume there exists a constant $C>0$ such that in the case $\Omega=\Omega_\infty$, $u$ satisfies 
\begin{align*}
\sup_{[-R,R]\times[-1,1]}|\nabla u|\le C \sqrt{R}\quad for\ all\ R>100, 
\end{align*}
 or in the case $\Omega=\mbR_+^2$, $u$ satisfies
\begin{align*}
\sup_{\{x\in\mbR_+^2||x|\le R\}}|\nabla u|\le C \sqrt{\log R}\quad \text{for all}\ R>100. 
\end{align*}
 Then the following statements hold.

{\rm (\romannumeral1)} If there exists a number $\beta\in(0,\pi)$ such that
\[
\Theta(\nabla u;\Bar{\Omega})\subset \mcS_{-\pi/2,\beta},\ or\ \Theta(\nabla u;\Bar{\Omega})\subset \mcS_{\pi/2,\beta},
\]
then $u$ depends on $x_2$ only, that is, $u=u(x_2)$.

{\rm (\romannumeral2)} If additionally $u\in C^2(\Bar{\Omega})\cap W^{3,1}_{loc}(\Bar{\Omega})$, and $\partial_{x_2}u|_{\partial\Omega}\in L^\infty(\partial\Omega)$, then exactly one of the following holds: a) $u$ depends on $x_2$ only; b) $\Theta(\nabla u;\Bar{\Omega})=S^1$; c) either $\Theta(\nabla u;\Bar{\Omega})=\mcS_{0,\pi/2}$ or $\Theta(\nabla u;\Bar{\Omega})=\mcS_{\pi,\pi/2}$.
\end{coro}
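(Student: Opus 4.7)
The plan is to reduce Corollary~\ref{Liouville in a strip} to Theorem~\ref{rigidity in a strip} via the standard stream-function correspondence. Set $\bv := \nabla^\perp u = (-\partial_{x_2}u, \partial_{x_1}u)$ and $P := F(u) - \frac{1}{2}|\nabla u|^2$. A direct computation gives
\[
(\bv\cdot\nabla)\bv + \nabla P = (F'(u) - \Delta u)\,\nabla u,
\]
which vanishes almost everywhere once one interprets $\Delta u = F'(u)$ as the pointwise-a.e.\ identity $\Delta u\,\nabla u = \nabla F(u)$ noted in the remark preceding the corollary. The divergence-free condition is immediate, and the Neumann condition $\partial_{x_1}u = 0$ on $\partial\Omega$ is exactly the slip condition $v_2 = 0$ on $\partial\Omega$. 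Since $|\bv| = |\nabla u|$ and $\bv$ has one derivative less of smoothness than $u$, the regularity and growth hypotheses on $u$ translate one-to-one into those on $\bv$ required by Theorem~\ref{rigidity in a strip}; in particular, in part (ii) the assumption $\partial_{x_2}u|_{\partial\Omega} \in L^\infty(\partial\Omega)$ becomes $v_1|_{\partial\Omega} \in L^\infty(\partial\Omega)$.

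The key bookkeeping is the rotation of flow angles: since $(a,b) \mapsto (-b,a)$ is counterclockwise rotation by $\pi/2$, the set $\Theta(\bv;\bar\Omega)$ is obtained from $\Theta(\nabla u;\bar\Omega)$ by adding $\pi/2$ to every angle. Hence the two hypotheses $\Theta(\nabla u;\bar\Omega) \subset \mcS_{-\pi/2,\beta}$ and $\Theta(\nabla u;\bar\Omega) \subset \mcS_{\pi/2,\beta}$ in (i) become $\Theta(\bv;\bar\Omega) \subset \mcS_{0,\beta}$ and $\Theta(\bv;\bar\Omega) \subset \mcS_{\pi,\beta}$, matching exactly the two cases of Theorem~\ref{rigidity in a strip} (i). That theorem then yields $\bv = (v_1(x_2), 0)$, and combined with $\partial_{x_1} u = v_2 \equiv 0$ we conclude $u = u(x_2)$, which proves (i).

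For part (ii), Theorem~\ref{rigidity in a strip} (ii) offers three cases for $\bv$. Rotating them back by $-\pi/2$ gives: (a) the shear-flow case $\bv = (v_1(x_2),0)$ becomes $u = u(x_2)$; (b) $\Theta(\bv;\bar\Omega) = S^1$ becomes $\Theta(\nabla u;\bar\Omega) = S^1$; and (c) the semicircles $S^1_\pm = \mcS_{\pm\pi/2,\pi/2}$ for $\bv$ become $\mcS_{0,\pi/2}$ and $\mcS_{\pi,\pi/2}$ for $\nabla u$. There is no serious obstacle; the proof is essentially a translation exercise, and the only subtlety is justifying the pointwise-a.e.\ Euler identity when $F'$ is merely bounded and measurable, which is precisely what the sharpened form $\Delta u\,\nabla u = \nabla F(u)$ provides and why the $H^2_{loc}$ (respectively $W^{3,1}_{loc}$) hypothesis on $u$ is imposed.
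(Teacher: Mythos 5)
Your proposal is correct and follows exactly the route the paper intends: the paper derives this corollary as an "immediate consequence" of Theorem~\ref{rigidity in a strip} via the same stream-function correspondence $\bv=\nabla^\perp u$, $P=F(u)-\frac12|\nabla u|^2$, with the $\pi/2$ rotation of flow angles and the identification of the Neumann condition with the slip condition. Your bookkeeping of the angle sets, regularity, growth bounds, and the a.e.\ interpretation $\Delta u\,\nabla u=\nabla F(u)$ all check out.
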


In Section \ref{existence section}, we will establish the existence of bounded and smooth solutions $u$ satisfying $\Theta(\nabla u;\Bar{\Omega})=\mcS_{0,\pi/2}$, from which Theorem~\ref{existence_thm} will subsequently follow.

\begin{remark}
If $F'$ is Lipschitz continuous, the Liouville-type theorem for semilinear elliptic equations \eqref{elleq1} in an $n$-dimensional slab $\mbR^{n-1}\times (0,1)$ has been proved in \cite[Theorem 1.6]{HN} under the assumption that $\partial_{x_n}u\ge0$ in the slab. For $\Omega=\Omega_\infty$, note that the assumption $\Theta(\nabla u;\Bar{\Omega})\subset \mcS_{\pi/2,\beta}$ is weaker than that $\partial_{x_2}u\ge0$ in $\Bar{\Omega}$. Moreover, in Corollary \ref{Liouville in a strip}, $F'$ is only bounded and measurable. Hence Corollary \ref{Liouville in a strip} {\rm (\romannumeral1)} can be regarded as a generalization of \cite[Theorem 1.6]{HN} when $n=2$.
\end{remark}

We provide the following example to illustrate  Corollary \ref{Liouville in a strip} {\rm (\romannumeral1)} in which less regularity of $u$ is required than in \cite[Theorem 1.6]{HN}.

\begin{example}\label{discontinuous nonlinearity example}
Let $\Omega=\Omega_\infty$. Suppose that $u\in H^{2}_{loc}(\Bar{\Omega})\cap C^1(\Bar{\Omega})$ solves
\begin{equation}\label{eqinexample}
    \left\{
    \begin{aligned}
& \Delta u=\sgn(u),\ \ in\  \Omega;\\  
&u(x_1,\pm1)=\pm\frac12, \ x_1\in\mbR.      
    \end{aligned}
    \right.
\end{equation}
Assume that there exists a constant $C>0$ such that 
\begin{align*}
\sup_{[-R,R]\times[-1,1]}|\nabla u|\le C \sqrt{R},\ for\ all\ R>100,   
\end{align*}
and that there exists a number $\beta\in(0,\pi)$ such that 
\begin{align}\label{direction assumption in example}
\Theta(\nabla u;\Bar{\Omega})\subset \mcS_{\pi/2,\beta}.  
\end{align}
It then follows from Corollary \ref{Liouville in a strip} {\rm (\romannumeral1)} that $u$ depends on $x_2$ only. This, together with the assumption \eqref{direction assumption in example}, further implies that $u'(x_2)\ge0$. Note that the elliptic equation in \eqref{eqinexample} implies that  $((u')^2-2|u|)'=0$. So $u'=\sqrt{2|u|+c}$ for some constant $c$. Since $u$ has zeros and $2|u|+c\ge0$, one has $c\ge0$. Finally, one can explicitly solve the ODE and find that $c=0$, and the only solution is $u=\frac12 x_2|x_2|$.
\end{example}

Second, we give a consequence of Theorem~\ref{rigidity in a strip} {\rm (\romannumeral2)} by showing the structural stability of any shear flow $(s(x_2),0)$ in $\Omega_\infty$ as long as all zeros of $s$ are not inflection points. In fact, if $s$ is strictly convex (which is the case for Poiseuille flow), we show a stronger result that any traveling wave solution near $(s(x_2),0)$ must be a shear flow.

\begin{coro}\label{stability of P flow}
Consider the shear flow $\bv_{sh}=(s(x_2),0)$ in $\Omega=\Omega_\infty$, where $s\in C^2([-1,1])$.

{\rm (\romannumeral1)} Assume $\min_{[-1,1]}s''>0$. Suppose that $\bV(t;x)=\bv(x_1-ct,x_2)$ is a travelling wave solution to the time-dependent Euler system
\begin{eqnarray*}
\left\{\begin{aligned}
&\partial_t\bV+\bV\cdot\nabla\bV+\nabla \Pi=0, & t\in\mbR, x\in\Omega,\\
&\Div\bV=0, & t\in\mbR, x\in\Omega,
\end{aligned}\right.
\end{eqnarray*}
where $c\in\mbR$ and the profile $\bv=(v_1,v_2)\in C^2(\Bar{\Omega})\cap L^\infty(\Omega)$ satisfies the slip boundary condition \eqref{slip boundary condition}.
If 
\begin{equation}\label{smallinCor}
   \|\partial_{x_2}(\omega-\omega_{sh})\|_{L^\infty(\Omega)}<\min_{[-1,1]}s'', 
\end{equation} where $\omega:=\partial_{x_1}v_2-\partial_{x_2}v_1$ and $\omega_{sh}=-s'$ are the vorticities of $\bv$ and $\bv_{sh}$, respectively, then $\bv$ must be a shear flow.

{\rm (\romannumeral2)} If $s$ has finitely many zeros in $[-1,1]$, and $s''(c)\neq0$ whenever $s(c)=0$, then the shear flow $\bv_{sh}$ is structurally stable in $\Omega$ in the following sense: there exists a number $\varepsilon_0>0$ such that, for any solution $\bv=(v_1,v_2)\in C^2(\Bar{\Omega})$ of \eqref{Euler} and \eqref{slip boundary condition}, if 
\begin{equation}\label{smallincor_2}
    \|v_1-s\|_{W^{2,\infty}(\Omega)}\le\varepsilon_0,
\end{equation} then $\bv$ must be a shear flow.
\end{coro}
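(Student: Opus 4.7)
Both parts follow a unified strategy: apply Theorem~\ref{rigidity in a strip} (ii) to a suitable steady Euler velocity $\bu$ (in Part (i), the moving-frame profile $\bu=\bv-(c,0)$; in Part (ii), $\bu=\bv$ itself) and use the vorticity-transport identity $\bu\cdot\nabla\omega=0$ to eliminate Cases (b) and (c) in the trichotomy, leaving only Case (a), i.e., a shear flow. The key geometric observation is that whenever $\Theta(\bu;\Bar{\Omega})=S^1$ or $S^1_\pm$, the purely vertical direction $(0,\pm 1)$ lies in $\Theta$ and hence is attained at some $x^*\in\Bar{\Omega}$; the slip condition forces $x^*$ to be an interior point of $\Omega_\infty$. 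Then $u_1(x^*)=0$, $u_2(x^*)\eqq a\neq 0$, and the transport identity reduces to $a\,\partial_{x_2}\omega(x^*)=0$, so $\partial_{x_2}\omega(x^*)=0$. The rest of each argument contradicts this with the smallness hypothesis.

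\noindent\textbf{Part (i).} In the moving frame, $\tilde{\bv}\vcentcolon=\bv-(c,0)$ satisfies \eqref{Euler} and \eqref{slip boundary condition} in $\Omega_\infty$ and shares the vorticity $\omega$ with $\bv$, and its $C^2\cap L^\infty$ regularity ensures that Theorem~\ref{rigidity in a strip} (ii) applies. Since $\omega_{sh}=-s'$, hypothesis \eqref{smallinCor} yields the uniform strict upper bound
\[
\partial_{x_2}\omega \;=\; -s''+\partial_{x_2}(\omega-\omega_{sh})\;<\; -s''+\min_{[-1,1]}s''\;\le\;0
\]
throughout $\Omega_\infty$, which directly rules out the vanishing $\partial_{x_2}\omega(x^*)=0$ required by Cases (b) or (c). Hence only Case (a) can occur, and $\bv$ is a shear flow.

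\noindent\textbf{Part (ii).} The main obstacle is that $W^{2,\infty}$-closeness of $v_1$ to $s$ does not directly control the term $\partial_{x_1}v_2$ appearing in $\omega$. The resolution is to use the divergence-free constraint: $\partial_{x_2}v_2=-\partial_{x_1}(v_1-s)$, so differentiating in $x_1$ yields
\[
\partial_{x_2}(\omega-\omega_{sh})=-\partial_{x_1}^2(v_1-s)-\partial_{x_2}^2(v_1-s),
\]
which is bounded in $L^\infty$ by $2\varepsilon_0$. (The same divergence identity combined with the slip condition at $x_2=-1$ also gives $\|v_2\|_{L^\infty}\le 2\varepsilon_0$, which verifies the growth hypothesis of Theorem~\ref{rigidity in a strip}.) As in the Plan, Case (b) or (c) produces an interior $x^*$ with $v_1(x^*)=0$ and $v_2(x^*)\neq 0$, and $\partial_{x_2}\omega(x^*)=0$ rewrites as $s''(x_2^*)=\partial_{x_2}(\omega-\omega_{sh})(x^*)$; together with $|s(x_2^*)|\le\varepsilon_0$ (from $v_1(x^*)=0$ and $\|v_1-s\|_{L^\infty}\le\varepsilon_0$), the height $x_2^*\in[-1,1]$ satisfies simultaneously
\[
|s(x_2^*)|\le\varepsilon_0\quad\text{and}\quad|s''(x_2^*)|\le 2\varepsilon_0.
\]

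\noindent To close the argument, I invoke the non-degeneracy of zeros of $s$. Set $\alpha\vcentcolon=\min_i|s''(c_i)|>0$ over the finite zero set $\{c_i\}$, and by continuity pick an open neighborhood $U\supset\{c_i\}$ on which $|s''|\ge\alpha/2$, while $|s|\ge\mu>0$ on $[-1,1]\setminus U$ for some $\mu>0$. Choosing $\varepsilon_0<\min(\mu/2,\alpha/4)$ forces $x_2^*\in U$ from $|s(x_2^*)|<\mu$, whence $|s''(x_2^*)|\ge\alpha/2>2\varepsilon_0$, a contradiction. Thus Case (a) must hold and $\bv$ is a shear flow.
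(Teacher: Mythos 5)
Your proof is correct and follows essentially the same route as the paper's: both parts reduce to excluding the vertical directions $(0,\pm 1)$ from the flow-angle set via the vorticity transport identity (your $\partial_{x_2}\omega$ equals $-\Delta v_1$ by incompressibility, which is exactly the quantity the paper manipulates in Part (ii)), and then invoking the trichotomy of Theorem~\ref{rigidity in a strip}~(ii). The only difference is presentational: you argue by contradiction from Cases (b)/(c), whereas the paper directly verifies $\Theta(\bv;\Bar{\Omega})\subset S^1\setminus\{(0,1),(0,-1)\}$.
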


\begin{remark}
It was proved in \cite[Proposition 4.1]{CEW} that the  specific Poiseuille flow $(x_2^2,0)$ in the $x_1$-periodic strip $\mbT\times(-1,1)$ is stable.
The method developed in \cite{CEW} works under the assumption that the Euler flow is $x_1$-periodic and relies on the specific expression of the background Poiseuille flow.
Corollary  \ref{stability of P flow} gives the stability of a large class of shear flows in an infinitely long strip. Hence Corollary \ref{stability of P flow}  can be regarded as an improvement of \cite[Proposition 4.1]{CEW}  for the flows with a general form. 
\end{remark}

\begin{remark}
In general, shear flows whose stagnation points happen to be inflection points can be unstable. Couette flow and Kolmogorov flow are two famous shear flows satisfying this property (see \cite{CEW, LZ}). The stationary structure near generic unstable shear flows will be investigated in future work. 
\end{remark}

\subsection{Key ideas of the proofs for the main results}
Here we give a brief overview of our approach to proving both Theorems \ref{rigidity in the plane} and  \ref{rigidity in a strip}.
First, it follows  from \eqref{Euler} that one has divergence-form equation
\begin{equation}\label{divergence-form Euler}
\Div(v_1\nabla v_2-v_2\nabla v_1)=0\  \ {\rm in}\ \Omega.    
\end{equation}
Our strategy is to exploit the geometric property of equation \eqref{divergence-form Euler}. We start by assuming $\Theta(\bv;\Bar{\Omega})\subsetneqq S^1$, which makes the flow angle single-valued. Despite the presence of any stagnation set, one can regularize the flow angle which is then shown to satisfy a divergence-form elliptic equation due to \eqref{divergence-form Euler}. Then we carefully analyze the {\it total curvature} for the flow, which is defined as
\[
\int_\Omega(|\nabla\bv|^2-|\nabla|\bv||^2)\,dx.
\]
The name of the above quantity is attributed to the fact that
\begin{align}\label{square norm of second fundamental form}
\frac{|\nabla\bv|^2-|\nabla|\bv||^2}{|\bv|^2}=\kappa^2+\tau^2, 
\end{align}
where $\kappa$ is the curvature of the streamline and $\tau$ is the curvature of the curve perpendicular to the streamline. The total curvature can be expressed in various forms by utilizing the following identities
\begin{align*}
|\nabla\bv|^2-|\nabla|\bv||^2=|\bv|^2\left|\nabla\left(\frac{\bv}{|\bv|}\right)\right|^2=\frac{|v_1\nabla v_2-v_2\nabla v_1|^2}{|\bv|^2}=\frac{|\nabla P|^2}{|\bv|^2},
\end{align*}
where the last identity is due to the fact that $\nabla^\perp P=v_1\nabla v_2-v_2\nabla v_1$. The curvature estimate (which is the estimate of the quantity \eqref{square norm of second fundamental form} in our context) has been an important tool in the study of minimal surfaces since the discovery of Simons inequality, which was proved by Simons in his fundamental work \cite{Simons} on the resolution of the Bernstein problem. See also \cite{Schoen, SSY} for more related works, and \cite{CLi,WW} and the references therein for recent developments.

Now the proof of Theorem~\ref{rigidity in the plane} is reduced to demonstrating the triviality of the total curvature under the assumption that $\Theta(\bv;\mbR^2)\subsetneqq S^1$. However, in the presence of boundaries, the assumption that $\Theta(\bv;\Omega)\subsetneqq S^1$ can only result in the finiteness of the total curvature, namely,
\begin{align}\label{finite total curvature}
\int_\Omega\frac{|v_1\nabla v_2-v_2\nabla v_1|^2}{|\bv|^2}\,dx<\infty.    
\end{align}
This is sufficient for us to conclude Part {\rm (\romannumeral3)} of Theorem~\ref{rigidity in a strip}, but not Part {\rm (\romannumeral2)}. To prove Part {\rm (\romannumeral2)}, we introduce a slightly stronger assumption that $\Theta(\bv;\Bar{\Omega})\subsetneqq S^1$, which then leads to an identity relating the total curvature to a boundary term. On the other hand, for any steady Euler flows with finite total curvature in $\Omega_\infty$ or $\mbR^2_+$,  the total curvature is bounded from below by the boundary term (see Proposition \ref{theorem of best constant}). So this lower bound estimate is sharp and its extremal functions are those steady flows satisfying $\Theta(\bv;\Bar{\Omega})\subsetneqq S^1$. With the help of some rigidity lemmas, we further characterize the non-shear extremal functions as Type {\thr} flows. Thus, roughly speaking, the class of Type {\thr} flows consists of all non-shear flows that have the least total curvature.

To prove Theorem~\ref{existence_thm}, we construct a Type {\thr} flow in $\mbR_+^2$ using a saddle solution to the Allen-Cahn equation (see \cite{DFP}) as its stream function. Similarly, for $\Omega=\Omega_\infty$, we construct the stream function by finding  a positive solution to some semilinear elliptic equation in the half-infinite strip $(0,\infty)\times(-1,1)$. The solution in the whole strip is obtained by an odd reflection with respect to $x_1$.

Lastly, we point out that the symmetry of the set of flow angles for any steady Euler flow, as revealed by Theorems \ref{rigidity in the plane} and \ref{rigidity in a strip}, can be quantified. Roughly speaking, the total curvature of any steady Euler flow, if finite, is equally distributed along almost every direction. For the exact statement, please refer to Proposition~\ref{total curvature evenly distributed}. 

The rest of the paper is organized as follows. In the next section, we define two test functions that will be frequently used in this paper and then collect some elementary lemmas. Section \ref{section rigidity in whole plane} is devoted to the proof of Theorem~\ref{rigidity in the plane}. In Section \ref{section steady flows with finite total curvature}, we establish a sharp lower bound estimate of the total curvature for steady Euler flows in $\Omega_\infty$ and $\mbR_+^2$. In Section \ref{section rigidity in strip}, with the aid of the results obtained in Section \ref{section steady flows with finite total curvature}, we give the proof of Theorem~\ref{rigidity in a strip} and Corollary \ref{stability of P flow}, and prove a quantitative version of symmetry for the flow angles. Finally, we provide the existence of bounded Type {\thr} steady Euler flows in Section \ref{existence section}.

\section{Preliminaries}\label{preliminary section}

Throughout the paper, we frequently use two standard test functions defined as follows. Given any $R>100$, define
\begin{equation}\label{log cutoff test function}
\phi_{log,R}(x)=\left\{\begin{aligned}
&1, && {\rm if}\ |x|\le R,\\
&2-\frac{\log|x|}{\log R}, && {\rm if} \ R<|x|<R^2,\\
&0, && {\rm if}\ |x|\ge R^2,
\end{aligned}\right.
\end{equation}
where $x=(x_1,x_2)$, and 
\begin{equation}\label{cutoff function for strip}
\phi_{1,R}(x_1)=\left\{\begin{aligned}
&1, && {\rm if}\ |x_1|\le R,\\
&\frac{x_1}{R}+2, && {\rm if} \ -2R<x_1<-R,\\
&2-\frac{x_1}{R}, && {\rm if} \ R<x_1<2R,\\
&0, && {\rm if}\ |x_1|\ge 2R.
\end{aligned}\right.
\end{equation}

For any vector $\bu=(u_1,u_2)$, we denote $\bu^\perp=(-u_2,u_1)$. The following lemma is well-known for smooth solutions of \eqref{Euler}, and it can be justified directly for solutions in $H^1$.

\begin{lemma}
Given any domain $\Omega\subset\mbR^2$. Let $\bv=(v_1,v_2)\in H_{loc}^1(\Omega)$ and $P\in \mathscr{D}'(\Omega)$ (the set of distributions in $\Omega$) be a solution of \eqref{Euler}. Then $\bv$ is a distributional solution of \eqref{divergence-form Euler}.
\end{lemma}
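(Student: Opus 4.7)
The plan is to prove the stronger pointwise-distributional identity $v_1\nabla v_2-v_2\nabla v_1=\nabla^\perp P$ in $\mathscr{D}'(\Omega)$, from which \eqref{divergence-form Euler} follows immediately since $\Div\nabla^\perp P=0$ on distributions (equality of mixed partials). The key issue is only bookkeeping: for $\bv\in H^1_{loc}(\Omega)$ each product $v_iv_j$ lies in $L^1_{loc}$, in fact in $W^{1,1}_{loc}$, with the product rule
\begin{equation*}
\partial_k(v_iv_j)=v_i\partial_k v_j+v_j\partial_k v_i\in L^1_{loc}(\Omega),
\end{equation*}
so all manipulations below are genuine equalities in $\mathscr{D}'(\Omega)$.

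First, using $\Div\bv=0$ one rewrites the nonlinear term as $\bv\cdot\nabla\bv=\Div(\bv\otimes\bv)$, so the distributional Euler equation reads componentwise
\begin{equation*}
\partial_1(v_1^2)+\partial_2(v_1v_2)+\partial_1P=0,\qquad \partial_1(v_1v_2)+\partial_2(v_2^2)+\partial_2P=0.
\end{equation*}
Next, setting $F:=v_1\nabla v_2-v_2\nabla v_1$, I would apply $\partial_2v_2=-\partial_1v_1$ to get $F_1=v_1\partial_1v_2+v_2\partial_2v_2$, then use the product rule to rewrite $v_1\partial_1v_2=\partial_1(v_1v_2)-v_2\partial_1v_1=\partial_1(v_1v_2)+v_2\partial_2v_2$ and collect terms to find $F_1=\partial_1(v_1v_2)+\partial_2(v_2^2)=-\partial_2P$ by the second momentum equation. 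The symmetric calculation with $\partial_1v_1=-\partial_2v_2$ gives $F_2=-\partial_1(v_1^2)-\partial_2(v_1v_2)=\partial_1P$ by the first momentum equation. Thus $F=\nabla^\perp P$ in $\mathscr{D}'(\Omega)$, and applying $\Div$ produces $0$.

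I do not expect a real obstacle here; the only thing that requires care is that every quadratic expression encountered, $v_1^2,\,v_2^2,\,v_1v_2$, actually lives in $W^{1,1}_{loc}$, so its distributional gradient is represented by the pointwise formula above. An equivalent route, if one prefers to avoid quoting the $H^1$ product rule, is to mollify $\bv$ to a smooth divergence-free field $\bv_\varepsilon$ (the pressure is recovered as the potential of $-\bv_\varepsilon\cdot\nabla\bv_\varepsilon+$ harmonic corrections), verify the identity pointwise, and pass to the limit using $L^2_{loc}$ convergence of $\bv_\varepsilon$ and $\nabla\bv_\varepsilon$, which gives $L^1_{loc}$ convergence of every quadratic term. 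Either way, the conclusion $\Div(v_1\nabla v_2-v_2\nabla v_1)=0$ in $\mathscr{D}'(\Omega)$ is immediate once the identity $F=\nabla^\perp P$ is established.
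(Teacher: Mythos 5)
Your argument is correct, but it takes a genuinely different (and slightly stronger) route than the paper. The paper's proof is a one-line duality trick: it tests the momentum equation against $\nabla^\perp\phi$ for $\phi\in C_0^\infty(\Omega)$, so the pressure term vanishes immediately via $\langle\nabla P,\nabla^\perp\phi\rangle=-\langle P,\Div\nabla^\perp\phi\rangle=0$, and then observes the pointwise a.e.\ identity $-\bv\cdot\nabla\bv^\perp=v_1\nabla v_2-v_2\nabla v_1$ (which uses $\Div\bv=0$ a.e.) to recognize the remaining integral as the weak form of \eqref{divergence-form Euler}. That version never differentiates a product of two $H^1$ functions: all manipulations happen on the $L^1_{loc}$ integrand against a fixed test function. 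You instead establish the distributional identity $v_1\nabla v_2-v_2\nabla v_1=\nabla^\perp P$ and then take the divergence. This is a stronger conclusion — and indeed one the paper quotes without proof in its discussion of the total curvature — but it costs you the Leibniz rule $\partial_k(v_iv_j)=v_i\partial_kv_j+v_j\partial_kv_i$ in $W^{1,1}_{loc}$, which you correctly identify as the only point needing care; it does hold here because $H^1_{loc}(\mbR^2)\hookrightarrow L^p_{loc}$ for every finite $p$, so each term lies in $L^q_{loc}$ for $q<2$. Your main computation ($F_1=\partial_1(v_1v_2)+\partial_2(v_2^2)=-\partial_2P$, $F_2=-\partial_1(v_1^2)-\partial_2(v_1v_2)=\partial_1P$) checks out. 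One caveat on your parenthetical alternative: mollifying $\bv$ does not produce an exact Euler solution ($\bv_\varepsilon\cdot\nabla\bv_\varepsilon$ need not be a gradient), so you cannot "verify the identity pointwise" for $\bv_\varepsilon$ and pass to the limit as stated; you would instead have to mollify the identity itself and control the commutator. Since that aside is not load-bearing, the proof as a whole stands.
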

\begin{proof}
Since $(\bv,P)$ solves \eqref{Euler}, for any $\phi\in C_0^\infty(\Omega)$, we have
\[
-\int_\Omega (\bv\cdot\nabla\bv^\perp)\cdot\nabla\phi\,dx=\int_\Omega (\bv\cdot\nabla\bv)\cdot\nabla^\perp\phi\,dx=0.
\]
Then the lemma follows from the fact that $-\bv\cdot\nabla\bv^\perp=v_1\nabla v_2-v_2\nabla v_1$.
\end{proof}

The next lemma says that any incompressible flow satisfying the slip boundary condition \eqref{slip boundary condition} and having trivial total curvature must necessarily be a shear flow.

\begin{lemma}\label{trivial curvature lemma}
Given $-\infty\le a<b\le\infty$. Let $\Omega=\mbR\times(a,b)$. Suppose that $\bv=(v_1,v_2)\in H_{loc}^1(\Bar{\Omega})\cap C(\Bar{\Omega})$ solves
\begin{eqnarray*}
\left\{\begin{aligned}
&v_1\nabla v_2-v_2\nabla v_1=0, &\ \ {\rm in}\ \Omega,\\
&\Div\bv=0, &\ \ {\rm in}\ \Omega.
\end{aligned}\right.
\end{eqnarray*}
If either $a$ or $b$ is finite, we also assume the slip boundary condition \eqref{slip boundary condition} on the boundary $\partial \Omega$.
Then $\bv$ is a shear flow.
\end{lemma}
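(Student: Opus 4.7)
The plan is to exploit the condition $v_1\nabla v_2 - v_2\nabla v_1 = 0$, which forces the unit direction $\bv/|\bv|$ to be locally constant on the nonvanishing set $\Omega_* := \{x \in \Omega : \bv(x) \neq 0\}$. On each connected component of $\Omega_*$ this gives a one-directional local shear structure, which I then propagate along full streamlines, and the slip boundary condition finally pins the shear direction to the horizontal direction.

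First I would carry out the key pointwise identity: on $\Omega_*$, where $|\bv|$ is continuous and positive (hence locally bounded away from zero on compact subsets), the Sobolev chain rule yields
\[
\partial_i\!\left(\frac{v_1}{|\bv|}\right) = -\frac{v_2\,(v_1\partial_i v_2 - v_2\partial_i v_1)}{|\bv|^3},\qquad \partial_i\!\left(\frac{v_2}{|\bv|}\right) = \frac{v_1\,(v_1\partial_i v_2 - v_2\partial_i v_1)}{|\bv|^3},
\]
so the hypothesis forces $\nabla(\bv/|\bv|)\equiv 0$ a.e.\ on $\Omega_*$. Since $\bv/|\bv|$ is continuous on $\Omega_*$, it must be constant $\be_V = (\cos\alpha_V,\sin\alpha_V)$ on each connected component $V$. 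Writing $\bv = f_V\,\be_V$ with $f_V = |\bv|>0$, the divergence-free condition reduces to $\be_V\cdot\nabla f_V = 0$, so $f_V$ depends only on the transverse coordinate $\be_V^\perp\cdot x$.

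Next I would upgrade this into a global line-structure inside each $V$. For any $x_0\in V$, set $\ell_{x_0} = \{x_0+t\be_V : t\in\mbR\}$ and $I_{x_0} = \ell_{x_0}\cap\Omega$. Since $\be_V^\perp\cdot x$ is constant on $\ell_{x_0}$, $\bv\equiv \bv(x_0)$ on $\ell_{x_0}\cap V$. I claim $I_{x_0}\subset V$: otherwise there is a first exit point $q\in I_{x_0}\cap \partial V$, and since $V$ is a connected component of the open set $\Omega_*$ one must have $\bv(q)=0$; but continuity along $\ell_{x_0}$ also gives $\bv(q)=\bv(x_0)\neq 0$, a contradiction. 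Hence $\bv\equiv \bv(x_0)$ on the entire segment $I_{x_0}$, and continuity of $\bv$ on $\bar{\Omega}$ extends this value to any endpoint of $I_{x_0}$ that lies on $\partial\Omega$.

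To conclude when at least one of $a,b$ is finite --- say $b<\infty$ --- I would argue by contradiction. If $v_2(x_0)\neq 0$ for some $x_0\in \Omega$, then $x_0\in V$ with $\sin\alpha_V\neq 0$, so $\ell_{x_0}$ reaches the top boundary at some $p=(p_1,b)\in\partial\Omega$; the line-extension step gives $\bv(p)=\bv(x_0)$, whereas the slip condition gives $v_2(p)=0$, forcing $v_2(x_0)=|\bv(x_0)|\sin\alpha_V = 0$, a contradiction. Thus $v_2\equiv 0$ in $\Omega$, and then $\partial_1 v_1=\Div\bv = 0$ yields $\bv=(v_1(x_2),0)$. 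The plane case $\Omega=\mbR^2$ is handled similarly: each component $V$ is, by the line-extension step, a full strip $\{\be_V^\perp\cdot x\in E_V\}$; two such strips with non-parallel $\be_V$'s would necessarily intersect in $\mbR^2$, so all $\be_V$'s are parallel and $\bv$ is a single-direction shear flow. The main technical hurdle lies in justifying the chain rule for $\bv/|\bv|$ under only the $H^1_{\text{loc}}\cap C$ regularity of $\bv$, which is handled by exhausting $\Omega_*$ by open subsets on which $|\bv|$ is uniformly bounded below, where the classical chain rule applies.
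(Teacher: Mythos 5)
Your proof is correct and follows essentially the same route as the paper's: both deduce that the flow direction is locally constant from $v_1\nabla v_2-v_2\nabla v_1=0$ (the paper via $\nabla(v_1/v_2)=0$ on $\{v_2\neq0\}$, you via $\nabla(\bv/|\bv|)=0$ on $\{\bv\neq0\}$), use incompressibility to get constancy along the flow direction, and propagate along full lines to the boundary, where the slip condition forces $v_2\equiv0$. Your write-up is in fact somewhat more detailed than the paper's, which leaves the line-propagation step and the multi-component bookkeeping implicit.
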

\begin{proof}
Assume $v_2$ is not identical to zero. We have $\nabla(\frac{v_1}{v_2})=0$ in $\{v_2\neq0\}$. So for every connected component of $\{v_2\neq0\}$, there exists a constant $c$ such that $v_1=cv_2$ in that component. This, together with the incompressibility, further implies that $((c,1)\cdot\nabla)\bv=0$. This means that $\bv$ is not only parallel to the direction $(c,1)$ but also constant along this direction. 
Hence one has $\bv= (cv_2(x_1-cx_2), v_2(x_1-cx_2))$. This implies that $\bv\equiv 0$ or $\bv= (cv_2(x_1-cx_2), v_2(x_1-cx_2))$ if $\Omega=\mbR^2$, and that $\bv=(v_1(x_2),0)$ if either $a$ or $b$ is finite when the slip boundary condition $v_2=0$ is assigned on the boundary.
Thus the proof of the lemma is completed.
\end{proof}


\section{Steady flows in the whole plane}\label{section rigidity in whole plane}

This section is devoted to the proof of Theorem \ref{rigidity in the plane}.

It is known that \eqref{divergence-form Euler} can be reformulated into a divergence-form elliptic equation under the assumption that $v_1>0$ (\hspace{1sp}\cite{GG, AC1}) or $\bv$ is strictly away from stagnation points (\hspace{1sp}\cite{HN2}). Now the key obstruction is that the flow may have stagnation points. Fortunately, under the assumption that $\Theta(\bv;\mbR^2)\subsetneqq S^1$, we can still derive a divergence-form elliptic equation satisfied by the regularized flow angle.

Given a unit vector $\be$ and any vector $\bu$ that does not point towards the direction of $-\be$, the angle from $\be$ to $\bu$, denoted by $\angle(\be,\bu)$, is defined as
\begin{eqnarray*}
\angle(\be,\bu)=\left\{\begin{aligned}
&\sgn(\bu\cdot\be^\perp)\cos^{-1}\left(\frac{\bu\cdot\be}{|\bu|}\right), && {\rm if}\ \bu\cdot\be^\perp\neq 0,\\
&0, && {\rm if}\ \bu=|\bu|\be.
\end{aligned}\right.
\end{eqnarray*}
Note that the angle from $\be$ to the zero vector is defined as $0$.

Throughout, we denote $\bi$ the unit vector $(1,0)$. 
Let us now give the proof of Theorem \ref{rigidity in the plane}.  Our proof is partially inspired by the study on De Giorgi conjecture in \cite{GG,AC1}.

\begin{proof}[Proof of Theorem \ref{rigidity in the plane}]
Without loss of generality, we assume $-\bi\notin \Theta(\bv;\mbR^2)$ and denote $\theta=\angle(\bi, \bv)$. Given any $\varepsilon>0$, let $\theta_\varepsilon=\angle(\bi,\bv_\varepsilon)$ be the angle from $\bi$ to $\bv_\varepsilon\vcentcolon=(v_1+\varepsilon,v_2)$. Note that $\bv_\varepsilon(x)\notin(-\infty,\varepsilon)\times\{0\}$ for all $x\in\mbR^2$. Hence $\theta_\varepsilon\in H^1_{loc}(\mbR^2)\cap C(\mbR^2)$ and the straightforward computations yeild
\begin{align}\label{gradient of theta}
\nabla\theta_\varepsilon=\frac{(v_1+\varepsilon)\nabla v_2-v_2\nabla v_1}{|\bv_\varepsilon|^2}.  
\end{align}
In view of \eqref{divergence-form Euler}, one has
\begin{align}\label{div form ee}
\Div\left(|\bv_\varepsilon|^2\nabla\theta_\varepsilon\right)=\varepsilon\Delta v_2 \quad \text{in}\,\, \mathscr{D}'(\mathbb{R}^2).
\end{align}

In the rest of the proof, we choose $\phi=\phi_{log, R}$. Taking  $\phi^2\theta_\varepsilon \in H_0^1(B_{R^2})$ as a test function for \eqref{div form ee} yields
\begin{align}\label{energy identity}
\int_{\mbR^2}\phi^2|\bv_\varepsilon|^2\left|\nabla\theta_\varepsilon\right|^2\,dx
=&-2\int_{\mbR^2}\phi\theta_\varepsilon|\bv_\varepsilon|^2\nabla\phi\cdot\nabla\theta_\varepsilon\,dx
-\varepsilon\int_{\mbR^2}\phi^2\theta_\varepsilon\Delta v_2\,dx.
\end{align}

Let us prove Part {\rm (\romannumeral1)} first. We may assume $\Theta(\bv;\mbR^2)\subset \mcS_{0,\beta}$ since the Euler system in $\mbR^2$ is invariant under rotation.
Note that $\theta = \angle(\bi, \bv)\in (-\pi+\beta, \pi-\beta)$. If $v_1\in (-\infty, -\frac{3}{2}\varepsilon)\cup (-\frac{\varepsilon}{2}, \infty)$, then one has $|\bv_\varepsilon|\geq \frac{\varepsilon}{2}$. If $v_1\in (-\frac{3}{2}\varepsilon, -\frac{\varepsilon}{2})$, then 
\[
|\bv_\varepsilon|\geq |v_2|=|v_1|\cdot|\tan\theta| \geq |v_1|\cdot|\tan\beta|\geq \frac{|\tan\beta|}{2}\varepsilon.
\]
Hence there is a positive constant $c$ depending only on $\beta$ such that
\begin{align}\label{lower bound of v}
|\bv_\varepsilon|\ge c\varepsilon.    
\end{align}
Integrating by parts gives
\begin{align}\label{extra term}
-\varepsilon\int_{\mbR^2}\phi^2\theta_\varepsilon\Delta v_2\,dx=\varepsilon\int_{\mbR^2}\phi^2\nabla\theta_\varepsilon\cdot\nabla v_2\,dx+2\varepsilon\int_{\mbR^2}\phi\theta_\varepsilon\nabla\phi\cdot\nabla v_2\,dx.
\end{align}
Since $|\theta_\varepsilon|\le\pi$, the dominated convergence theorem yields that
\begin{equation}\label{est1}
\lim_{\varepsilon\to 0}2\varepsilon\int_{\mbR^2}\phi\theta_\varepsilon\nabla\phi\cdot\nabla v_2\,dx =0.   
\end{equation}
As to the first integral on the right of \eqref{extra term}, combining \eqref{gradient of theta} and \eqref{lower bound of v} implies that
\[
|\varepsilon\phi^2\nabla\theta_\varepsilon\cdot\nabla v_2|\le \frac{C\varepsilon\phi^2|\nabla\bv|^2}{|\bv_\varepsilon|}
\le C\phi^2|\nabla\bv|^2\in L^1(\mbR^2).
\]
Also, $\frac{\varepsilon}{|\bv_\varepsilon|}$ converges to $0$ pointwisely in $\{v_2\neq0\}$ as $\varepsilon\rightarrow0$. So applying the dominated convergence theorem yields
\begin{equation}\label{est2}
    \begin{aligned} 
\lim_{\varepsilon\rightarrow0}\varepsilon\int_{\mbR^2}\phi^2\nabla\theta_\varepsilon\cdot\nabla v_2\,dx=& \lim_{\varepsilon\rightarrow0}\varepsilon\int_{\{v_2\neq0\}}\phi^2\nabla\theta_\varepsilon\cdot\nabla v_2\,dx + \lim_{\varepsilon\rightarrow0}\varepsilon\int_{\{v_2\equiv 0\}}\phi^2\nabla\theta_\varepsilon\cdot\nabla v_2\,dx \\   
= &\lim_{\varepsilon\rightarrow0}\varepsilon\int_{\{v_2\neq0\}}\phi^2\nabla\theta_\varepsilon\cdot\nabla v_2\,dx\\
=& 0.
    \end{aligned}
\end{equation}
Combining \eqref{est1} and \eqref{est2}  gives
\begin{align}\label{2nd integral tends to zero}
\lim_{\varepsilon\rightarrow0}\varepsilon\int_{\mbR^2}\phi^2\theta_\varepsilon\Delta v_2\,dx=0.
\end{align}

Now passing to the limit $\varepsilon\rightarrow0$ in \eqref{energy identity} and recalling $\theta =\angle(\bi, \bv)$ yield
\begin{equation}\label{limit energy identity}
\begin{aligned}
&\int_{\{v_2\neq0\}}\frac{\phi^2|v_1\nabla v_2-v_2\nabla v_1|^2}{|\bv|^2}\,dx\\
=& -2\int_{\{v_2\neq0\}}\phi\theta\nabla\phi\cdot(v_1\nabla v_2-v_2\nabla v_1)\,dx\\
\le&2\pi\left(\int_{\{v_2\neq0\}\cap\{|x|>R\}}\frac{\phi^2|v_1\nabla v_2-v_2\nabla v_1|^2}{|\bv|^2}\,dx\right)^{\frac12} \left(\int_{\mbR^2}|\bv|^2|\nabla\phi|^2\,dx\right)^{\frac12},
\end{aligned}  
\end{equation}
where the property that the gradient of $\phi=\phi_{log, R}$ has support on $\{|x|>R\}$ has been used.
By the definition of $\phi_{log,R}$ and the assumption \eqref{growth at far fields}, one has
\begin{align*}
\int_{\mbR^2}|\bv|^2|\nabla\phi|^2\,dx
\le\int_{\{R<|x|<R^2\}}\frac{C}{|x|^2\log R}\,dx\le C.
\end{align*}
As a result, we acquire 
\begin{align}\label{AC trick}
\int_{\{v_2\neq0\}}\frac{\phi^2|v_1\nabla v_2-v_2\nabla v_1|^2}{|\bv|^2}\,dx
\le C\left(\int_{\{v_2\neq0\}\cap\{|x|>R\}}\frac{\phi^2|v_1\nabla v_2-v_2\nabla v_1|^2}{|\bv|^2}\,dx\right)^{\frac12}.
\end{align}
It follows immediately that
\begin{align*}
\int_{\{v_2\neq0\}}\frac{\phi^2|v_1\nabla v_2-v_2\nabla v_1|^2}{|\bv|^2}\,dx\le C.
\end{align*}
Since the constant $C$ is independent of $R$, we have
\begin{align}\label{finite total curvature in the plane}
\int_{\{v_2\neq0\}}\frac{|v_1\nabla v_2-v_2\nabla v_1|^2}{|\bv|^2}\,dx\le C.
\end{align}

Based on \eqref{finite total curvature in the plane}, now letting $R\rightarrow\infty$ in \eqref{AC trick}, we further infer that the total curvature must be $0$.
Consequently, we have $v_1\nabla v_2-v_2\nabla v_1=0$ a.e. in $\{v_2\neq0\}$. On the other hand, it is obvious   that $v_1\nabla v_2-v_2\nabla v_1=0$ a.e. in $\{v_2 = 0\}$. Finally, recalling Lemma \ref{trivial curvature lemma} finishes the proof of Part {\rm (\romannumeral1)}.

To prove Part {\rm (\romannumeral2)}, we assume $\Theta(\bv,\mbR^2)\subsetneqq S^1$ and need to prove that $\bv$ is a shear flow. Again, we may assume $-\bi\notin \Theta(\bv,\mbR^2)$. Now since $\bv\in W^{2,1}_{loc}(\mbR^2)$, \eqref{2nd integral tends to zero} is evident. So the previous proof also allows us to conclude that $\bv$ is a shear flow. 

The proof of Theorem \ref{rigidity in the plane} is completed.
\end{proof}


\section{Flows in half-plane or in strips}\label{section steady flows with finite total curvature}

In this section, we denote $\phi_R=\phi_{log,R}$ defined in \eqref{log cutoff test function} if $\Omega=\mbR^2$ or $\Omega=\mbR^2_+$, and $\phi_R=\phi_{1,R}$ defined in \eqref{cutoff function for strip} if $\Omega=\Omega_\infty$.
First, inspired by the proof of Theorem \ref{rigidity in the plane}, we have the following two propositions on the rigidity of flows in half-plane or in strips.

\begin{pro}
\label{rigidity lemma when missing one direction}
Let $\Omega=\Omega_\infty$ or $\Omega=\mbR_+^2$. Suppose that $\bv=(v_1,v_2)\in H^1_{loc}(\Bar{\Omega})\cap C(\Bar{\Omega})\cap W^{2,1}_{loc}(\Bar{\Omega})$ is a solution of the steady Euler system  \eqref{Euler} satisfying the slip boundary condition \eqref{slip boundary condition}. Assume that $\bv$ satisfies \eqref{growth at horizontal far fields} and \eqref{growth at halfplane far fields} in the case $\Omega=\Omega_\infty$ and $\Omega=\mbR_+^2$, respectively. If
\begin{align*}
\Theta(\bv;\Bar{\Omega})\subset S^1\setminus\{-\bi\}\,\,\,\, or\,\,\,\, \Theta(\bv;\Bar{\Omega})\subset S^1\setminus\{\bi\},
\end{align*}
then $\bv$ is a shear flow.   
\end{pro}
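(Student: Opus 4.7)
The plan is to adapt the regularization scheme used in the proof of Theorem~\ref{rigidity in the plane}~{\rm (\romannumeral2)} to the present boundary setting. Reflecting $x_1\mapsto -x_1$ exchanges the two exclusions, so we may assume $-\bi\notin\Theta(\bv;\Bar{\Om})$. Set $\bv_\varepsilon\co(v_1+\varepsilon,v_2)$ and $\theta_\varepsilon\co\angle(\bi,\bv_\varepsilon)$; since $\bv_\varepsilon$ avoids the ray $(-\infty,\varepsilon]\times\{0\}$ everywhere in $\Bar{\Om}$, one has $\theta_\varepsilon\in H^1_{loc}(\Bar{\Om})\cap C(\Bar{\Om})$, the gradient formula \eqref{gradient of theta} remains valid, and \eqref{divergence-form Euler} yields \eqref{div form ee} in $\mathscr{D}'(\Om)$.

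The natural test function is $\phi_R^2\theta_\varepsilon$, where $\phi_R=\phi_{1,R}$ when $\Om=\Om_\infty$ and $\phi_R=\phi_{log,R}$ when $\Om=\mbR^2_+$. Because $\phi_R$ need not vanish on $\p\Om$, integration by parts formally produces the boundary contribution $\int_{\p\Om}\phi_R^2\theta_\varepsilon|\bv_\varepsilon|^2\p_\nu\theta_\varepsilon\,dS$. The crucial observation---and the only essential new ingredient compared with the plane case---is that this boundary term vanishes: on $\p\Om$ the slip condition \eqref{slip boundary condition} gives $v_2=0$, while the exclusion $-\bi\notin\Theta(\bv;\Bar{\Om})$ forces $v_1\ge 0$ on $\p\Om$, for otherwise $\bv/|\bv|$ would equal $-\bi$ at such a point. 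Hence $v_1+\varepsilon>0$ on $\p\Om$, so $\bv_\varepsilon$ points along $+\bi$ there and $\theta_\varepsilon\equiv 0$ on $\p\Om$. The energy identity \eqref{energy identity} is therefore reproduced verbatim.

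Passing to the limit $\varepsilon\to 0$ uses the $W^{2,1}_{loc}$ regularity of $\bv$: since $\De v_2\in L^1_{loc}$ and $\phi_R^2\theta_\varepsilon$ is bounded by $\pi$ with bounded support, dominated convergence gives $\varepsilon\int_\Om\phi_R^2\theta_\varepsilon\De v_2\,dx\to 0$. The analog of \eqref{limit energy identity}, followed by Cauchy--Schwarz, yields
\[
\int_{\{v_2\neq 0\}}\f{\phi_R^2|v_1\na v_2-v_2\na v_1|^2}{|\bv|^2}\,dx\le 2\pi\b(\int_{\operatorname{supp}\na\phi_R}\f{\phi_R^2|v_1\na v_2-v_2\na v_1|^2}{|\bv|^2}\,dx\b)^{\!1/2}\b(\int_\Om|\bv|^2|\na\phi_R|^2\,dx\b)^{\!1/2}.
\]
The growth hypotheses are calibrated exactly so that $\int_\Om|\bv|^2|\na\phi_R|^2\,dx$ is bounded uniformly in $R$: for $\Om_\infty$ the support of $\na\phi_{1,R}$ has area of order $R$, with $|\na\phi_{1,R}|^2\le R^{-2}$ and $|\bv|^2\lesssim R$ there by \eqref{growth at horizontal far fields}; for $\mbR^2_+$ the polar-coordinate bound from the proof of Theorem~\ref{rigidity in the plane} carries over unchanged. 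Letting $R\to\infty$ first forces the total curvature to be finite and then, re-using the same estimate, to vanish, so $v_1\na v_2-v_2\na v_1=0$ a.e.\ in $\Om$, and Lemma~\ref{trivial curvature lemma} concludes that $\bv$ is a shear flow. The main obstacle is confirming the vanishing of the boundary term, which reduces to the mild but decisive sign consequence $v_1|_{\p\Om}\ge 0$ of the one-direction exclusion hypothesis; without it, the integration by parts would not close the energy estimate.
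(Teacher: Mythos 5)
Your proposal is correct and follows essentially the same route as the paper: the paper's (very brief) proof likewise tests \eqref{div form ee} with $\phi_R^2\theta_\varepsilon$ and rests on the single new observation that $\theta_\varepsilon$ vanishes on $\partial\Omega$ (which, as you note, follows from $v_2=0$ and $v_1\ge 0$ there), after which the argument of Theorem~\ref{rigidity in the plane} goes through verbatim. Your write-up simply fills in the details the paper omits, including the correct use of the $W^{2,1}_{loc}$ hypothesis to dispose of the $\varepsilon\Delta v_2$ term and the calibration of the growth conditions against $\phi_{1,R}$ and $\phi_{log,R}$.
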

\begin{proof}
The proof is almost the same as that of \autoref{rigidity in the plane}. We only point out that, assuming $-\bi\notin \Theta(\bv;\Bar{\Omega})$ and denoting $\theta_\varepsilon$ the angle from $\bi$ to $(v_1+\varepsilon,v_2)$, one should test \eqref{div form ee} by $\phi_{R}^2\theta_\varepsilon$ and note that $\theta_\varepsilon$ vanishes on the boundary $\partial\Omega$. The rest of the proof is omitted.
\end{proof}

\begin{pro}\label{rigidity lemma when missing two directions}
Let $\Omega=\Omega_\infty$ or $\Omega=\mbR_+^2$. Suppose that $\bv=(v_1,v_2)\in C^1(\Bar{\Omega})\cap W^{2,1}_{loc}(\Bar{\Omega})$ is a solution of the steady Euler system \eqref{Euler} satisfying the slip boundary condition \eqref{slip boundary condition}. Assume that $\bv$ satisfies \eqref{growth at horizontal far fields} and \eqref{growth at halfplane far fields} in the case $\Omega=\Omega_\infty$ and $\Omega=\mbR_+^2$, respectively. If there exist two unit vectors $\be=(e_1,e_2)$ and $\bbf=(f_1,f_2)$ with $e_2f_2<0$, such that
\begin{align*}
\Theta(\bv;\Omega)\subset S^1\setminus\{-\be,-\bbf\},
\end{align*}
then $\bv$ is a shear flow.
\end{pro}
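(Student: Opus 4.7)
The plan is to adapt the argument of Proposition~\ref{rigidity lemma when missing one direction} to the two-direction setting, with the condition $e_2 f_2 < 0$ playing a crucial role in handling the boundary contributions. I first observe that the angle restriction extends automatically to the closure of $\Omega$: on $\partial\Omega$ the slip condition gives $\bv = v_1 \bi$, so $\Theta(\bv;\partial\Omega)\subseteq\{\bi,-\bi\}$, and since $e_2 f_2 < 0$ forces $e_2,f_2\ne 0$, we have $\pm\bi\notin\{-\be,-\bbf\}$; hence $\Theta(\bv;\Bar{\Omega})\subset S^1\setminus\{-\be,-\bbf\}$.

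For each $\varepsilon>0$ I then introduce the regularized flow $\bv_\varepsilon=\bv+\varepsilon\be$ and the angle $\theta_\varepsilon=\angle(\be,\bv_\varepsilon)$. Since $\bv_\varepsilon$ is a positive multiple of $-\be$ only when $\bv$ itself is, $\theta_\varepsilon$ is well-defined on $\Bar{\Omega}$ with $|\theta_\varepsilon|\le\pi$. Using the rotation-invariant identity $(\bv\cdot\be)\nabla(\bv\cdot\be^\perp)-(\bv\cdot\be^\perp)\nabla(\bv\cdot\be)=v_1\nabla v_2-v_2\nabla v_1$, one checks
\[
|\bv_\varepsilon|^2\nabla\theta_\varepsilon=(v_1+\varepsilon e_1)\nabla v_2-(v_2+\varepsilon e_2)\nabla v_1,
\]
so that the analogue of \eqref{div form ee} reads
\[
\Div\!\left(|\bv_\varepsilon|^2\nabla\theta_\varepsilon\right)=\varepsilon(e_1\Delta v_2-e_2\Delta v_1)\quad\text{in } \mathscr{D}'(\Omega).
\]
I would then test this equation against $\phi_R^2\theta_\varepsilon$, where $\phi_R=\phi_{log,R}$ for $\Omega=\mbR_+^2$ and $\phi_R=\phi_{1,R}$ for $\Omega=\Omega_\infty$, and integrate by parts. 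The resulting identity contains the bulk energy term $\int_\Omega\phi_R^2|\bv_\varepsilon|^2|\nabla\theta_\varepsilon|^2\,dx$, a cross term controlled via Cauchy--Schwarz and the growth assumptions \eqref{growth at horizontal far fields}/\eqref{growth at halfplane far fields}, a right-hand side that tends to zero as $\varepsilon\to 0$ thanks to the $W^{2,1}_{loc}$-regularity of $\bv$ combined with $|\theta_\varepsilon|\le\pi$, and a boundary integral on $\partial\Omega$. Unlike in Proposition~\ref{rigidity lemma when missing one direction} where $\theta_\varepsilon$ vanishes on $\partial\Omega$, here the boundary values of $\theta_\varepsilon$ interpolate between the limits $-\alpha$ on $\{v_1>0\}$ and $\pi-\alpha$ on $\{v_1<0\}$ (with $\be=(\cos\alpha,\sin\alpha)$), so the boundary integral does not vanish automatically.

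To defeat this, I would run the same construction with $\bbf$ in place of $\be$, obtaining $\tilde\theta_\varepsilon=\angle(\bbf,\bv+\varepsilon\bbf)$ and an analogous identity. The condition $e_2 f_2 < 0$ makes the weights $\lambda_1=1$, $\lambda_2=-e_2/f_2$ both positive, and I expect this particular positive linear combination of the two energy identities to cancel the worst boundary singularities: on $\partial\Omega$ one has $\bv+\varepsilon\be=(v_1+\varepsilon e_1,\varepsilon e_2)$ and $\bv+\varepsilon\bbf=(v_1+\varepsilon f_1,\varepsilon f_2)$ lying in opposite half-planes precisely because $e_2f_2<0$, and the boundary-value step functions of $\theta_\varepsilon$ and $\tilde\theta_\varepsilon$ across zeros of $v_1$ have complementary jumps. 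After integration by parts in the tangential variable $x_1$, the distributional jumps are multiplied by $v_1^2$, which vanishes at those same zeros, reducing the combined boundary integral to an error controlled by the growth assumptions and the $C^1$ regularity of $\bv$ up to $\partial\Omega$.

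Finally, passing $\varepsilon\to 0$ and then $R\to\infty$ yields the finite total curvature bound $\int_{\{\bv\neq 0\}}|v_1\nabla v_2-v_2\nabla v_1|^2/|\bv|^2\,dx<\infty$, which a bootstrap in the spirit of \eqref{AC trick} upgrades to $v_1\nabla v_2-v_2\nabla v_1\equiv 0$ a.e.~in $\Omega$; Lemma~\ref{trivial curvature lemma} then concludes that $\bv$ is a shear flow. The main obstacle, and the only genuinely new step beyond the one-direction case, is the construction of the combined $\be/\bbf$ identity and the verification that the $e_2 f_2<0$-induced cancellation really annihilates the leading boundary contributions; this is where the hypothesis is used in an essential way.
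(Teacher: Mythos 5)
Your overall strategy coincides with the paper's: regularize the angle relative to $\be$, derive an energy identity with a boundary contribution, and exploit $e_2f_2<0$ by combining the $\be$- and $\bbf$-identities. However, the way you dispose of the boundary term has a genuine gap, in two respects. First, the weights $\lambda_1=1$, $\lambda_2=-e_2/f_2$ do not produce the cancellation you want: on $\partial\Omega$ the jump of $\angle(\be,\bv)$ across a zero of $v_1$ equals $\pi\sgn(e_2)$, \emph{independently of the magnitude} $|e_2|$, so the jumps of $\theta+\lambda_2\tilde\theta$ cancel only if $\lambda_2=1$ (the paper simply adds the two identities and uses $\sgn(e_2)+\sgn(f_2)=0$). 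Second, and more seriously, your fallback mechanism — tangential integration by parts so that the distributional jumps are killed by $v_1^2$ — leaves the remainder $\mp\tfrac12\int_{\partial\Omega}\partial_{x_1}\phi_R\,\theta\,v_1^2\,dx_1$. This proposition does \emph{not} assume $v_1|_{\partial\Omega}\in L^\infty(\partial\Omega)$ (that hypothesis appears only in Lemma \ref{finite total curvature lemma when missing one direction}), so under \eqref{growth at horizontal far fields} alone this remainder can be of order $R$; and even if it were bounded, it does not tend to $0$ as $R\to\infty$, so the bootstrap of \eqref{AC trick} would only yield \emph{finite} total curvature. That is not enough: Type \thr{} flows have finite, nonzero total curvature, so finiteness alone cannot force $v_1\nabla v_2-v_2\nabla v_1\equiv0$.

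The missing idea is the paper's Step 2: after writing $\theta v_1=\bigl(\bar\theta+\tfrac{\pi}{2}\sgn(e_2)\bigr)v_1-\tfrac{\pi}{2}\sgn(e_2)|v_1|$ on $\partial\Omega$, one tests the divergence-form equation \eqref{divergence-form Euler} by $\phi$ itself to convert the flux $\int_{\partial\Omega}\phi\, v_1\partial_{\bn}v_2\,dx_1$ into the bulk integral $\int_\Omega\nabla\phi\cdot(v_1\nabla v_2-v_2\nabla v_1)\,dx$, which is supported on $\{\nabla\phi\neq0\}$ and is absorbed by Cauchy--Schwarz into the same right-hand side that vanishes as $R\to\infty$; the genuinely problematic pieces $-\tfrac{\pi}{2}\sgn(e_2)\int_{\partial\Omega}\phi|v_1|\partial_{\bn}v_2\,dx_1$ and its $\bbf$-counterpart then cancel \emph{exactly} upon adding the two identities with equal weights, because $\sgn(e_2)+\sgn(f_2)=0$. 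With that substitution your argument closes; without it, the proof as written does not reach the shear-flow conclusion.
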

\begin{proof}
The proof is divided into three steps.

{\bf Step 1.} We shall derive an energy identity analogous to \eqref{limit energy identity}. However, the identity includes a boundary term since the missing directions are slant. 

Note that $-\be\notin \Theta(\bv;\Omega)$ and denote
$\bv_\varepsilon\vcentcolon=(\bv\cdot\be+\varepsilon)\be+(\bv\cdot\be^\perp)\be^\perp$. Similar to the initial step in proving Theorem \ref{rigidity in the plane}, we define $\theta_\varepsilon=\angle(\be,\bv_\varepsilon)$ as the angle from $\be$ to $\bv_\varepsilon$. The straightforward computations show that $\theta_\varepsilon$ satisfies 
\begin{equation}\label{gradthetaeps}
\nabla\theta_\varepsilon=\frac{(\bv\cdot\be+\varepsilon)\nabla (\bv\cdot\be^\perp)-(\bv\cdot\be^\perp)\nabla(\bv\cdot\be)}{|\bv_\varepsilon|^2}  
\end{equation}
and 
\begin{equation}\label{eqthetaeps}
\Div\left(|\bv_\varepsilon|^2\nabla\theta_\varepsilon\right)=\varepsilon\Delta (\bv\cdot\be^\perp).
\end{equation}
Testing the equation \eqref{eqthetaeps} by $\phi\theta_\varepsilon$ with $\phi\in C_c^1(\Bar{\Omega})$, integrating by parts, and using \eqref{gradthetaeps} yield
\begin{align*}
&\int_{\Omega}\frac{\phi|(\bv\cdot\be+\varepsilon)\nabla (\bv\cdot\be^\perp)-(\bv\cdot\be^\perp)\nabla(\bv\cdot\be)|^2}{|\bv_\varepsilon|^2}\,dx\\
=&-\int_{\Omega}\theta_\varepsilon\nabla\phi\cdot\left[(\bv\cdot\be+\varepsilon)\nabla (\bv\cdot\be^\perp)-(\bv\cdot\be^\perp)\nabla(\bv\cdot\be)\right]\,dx\\
&+\int_{\partial\Omega}\phi\theta_\varepsilon\left[(\bv\cdot\be+\varepsilon)\partial_{\bn}(\bv\cdot\be^\perp)-(\bv\cdot\be^\perp)\partial_{\bn}(\bv\cdot\be)\right]\,dx_1
-\int_{\Omega}\varepsilon\phi\theta_{\varepsilon}\Delta(\bv\cdot\be^\perp)\,dx.
\end{align*}
Since $\be$ is slant and $\partial\Omega$ is horizontal, we see that 
\[
\lim_{\varepsilon\rightarrow 0}\theta_\varepsilon(x)=\theta(x)\vcentcolon=\angle(\be,\bv(x))\quad \text{for\ every}\,\, x\in\partial\Omega\ \text{with}\ v_1(x)\neq0.
\]
So applying the dominated convergence theorem in the process of $\varepsilon\rightarrow0$ and noticing that
\[
(\bv\cdot\be)\nabla (\bv\cdot\be^\perp)-(\bv\cdot\be^\perp)\nabla(\bv\cdot\be)=v_1\nabla v_2-v_2\nabla v_1,
\]
we arrive at
\begin{align}\label{energy identity in a strip}
&\int_{\Omega}\frac{\phi|v_1\nabla v_2-v_2\nabla v_1|^2}{|\bv|^2}\,dx
=-\int_{\Omega}\theta\nabla\phi\cdot(v_1\nabla v_2-v_2\nabla v_1)\,dx
+\int_{\partial\Omega}\phi\theta v_1\partial_{\bn}v_2\,dx_1.
\end{align}

{\bf Step 2.} We need to handle the boundary term in \eqref{energy identity in a strip}.
Let $\Bar{\theta}=\angle(\be,\bi)$ be the angle from $\be$ to $\bi$. If $\be$ points upward, that is, $e_2>0$, then \[
\theta=\Bar{\theta}\,\,\text{on}\,\, \{x\in\partial\Omega| v_1>0\} \quad  \text{and}\quad \theta=\Bar{\theta}+\pi\,\, \text{on}\,\, \{x\in\partial\Omega| v_1<0\}.
\]
Namely, $\theta=\Bar{\theta}+\frac{\pi}{2}-\frac{\pi}{2}\sgn(v_1)$ on $\{x\in\partial\Omega| v_1\neq0\}$ in the case of $e_2>0$. Similarly, if $\be$ points downward, then $\theta=\Bar{\theta}-\frac{\pi}{2}+\frac{\pi}{2}\sgn(v_1)$ on $\{x\in\partial\Omega| v_1\neq0\}$. In summary, we have 
\[
\theta v_1=\left(\Bar{\theta}+\frac{\pi}{2}\sgn(e_2)\right)v_1-\frac{\pi}{2}\sgn(e_2)|v_1|,\ \ on\ \partial\Omega.
\]
Hence we have
\begin{align}\label{handle boundary integral}
\int_{\partial\Omega}\phi\theta v_1\partial_{\bn}v_2\,dx_1=\left(\Bar{\theta}+\frac{\pi}{2}\sgn(e_2)\right)\int_{\partial\Omega}\phi v_1\partial_{\bn}v_2\,d x_1-\frac{\pi}{2}\sgn(e_2)\int_{\partial\Omega}\phi |v_1|\partial_{\bn}v_2\,d x_1.
\end{align}

Next, we test \eqref{divergence-form Euler} directly by $\phi$ to get
\begin{align*}
\int_{\partial\Omega}\phi v_1\partial_{\bn}v_2\,dx_1=\int_{\Omega}\nabla\phi\cdot(v_1\nabla v_2-v_2\nabla v_1)\,dx.
\end{align*}
This along with \eqref{energy identity in a strip} and \eqref{handle boundary integral} implies that for all $\phi\in C_0^1(\Bar{\Omega})$, one has
\begin{align}\label{energy identity when missing one direction}
\int_{\Omega}\frac{\phi|v_1\nabla v_2-v_2\nabla v_1|^2}{|\bv|^2}\,dx
=&\int_{\Omega}\left(\angle(\be,\bi)+\frac{\pi}{2}\sgn(e_2)-\angle(\be,\bv)\right)\nabla\phi\cdot(v_1\nabla v_2-v_2\nabla v_1)\,dx\nonumber\\
&-\frac{\pi}{2}\sgn(e_2)\int_{\partial\Omega}\phi|v_1|\partial_{\bn}v_2\,d x_1.
\end{align}

{\bf Step 3.} Note that the energy identity \eqref{energy identity when missing one direction} also holds when replacing $\be$ by $\bbf$. Adding these two identities and noticing that $\sgn(e_2)+\sgn(f_2)=0$, we get
\begin{equation}\label{curvatureangle}
\begin{aligned}
&\int_{\Omega}\frac{2\phi|v_1\nabla v_2-v_2\nabla v_1|^2}{|\bv|^2}\,dx\\
=&\int_{\Omega}\left(\angle(\be,\bi)-\angle(\be,\bv)+\angle(\bbf,\bi)-\angle(\bbf,\bv)\right)\nabla\phi\cdot(v_1\nabla v_2-v_2\nabla v_1)\,dx.
\end{aligned}
\end{equation}
If we take $\phi=\phi_R^2$ in \eqref{curvatureangle} and follow the same steps as those in the proof of Theorem~\ref{rigidity in the plane}, then it can be shown that $\bv$ must be a shear flow.  This completes the proof of Lemma \ref{rigidity lemma when missing two directions}. 
\end{proof}

The next lemma asserts that the total curvature of the flows must be finite as long as the set of flows angles for the interior domain is a proper subset of the whole circle. 

\begin{lemma}\label{finite total curvature lemma when missing one direction}
Let $\Omega=\Omega_\infty$ or $\Omega=\mbR_+^2$. Suppose that $\bv=(v_1,v_2)\in C^1(\Bar{\Omega})\cap W^{2,1}_{loc}(\Bar{\Omega})$ is a solution of \eqref{Euler} satisfying \eqref{slip boundary condition}. Assume that $v_1|_{\partial\Omega}\in L^\infty(\partial\Omega)$, and that $\bv$ satisfies \eqref{growth at horizontal far fields} and \eqref{growth at halfplane far fields}, respectively, in the respective case $\Omega=\Omega_\infty$ and $\Omega=\mbR_+^2$.
If
\begin{align}\label{condangle}
\Theta(\bv;\Omega)\subsetneqq S^1,
\end{align}
then the total curvature is finite, i.e., 
\begin{align}\label{finitecurvature}
\int_\Omega\frac{|v_1\nabla v_2-v_2\nabla v_1|^2}{|\bv|^2}\,dx<\infty.    
\end{align}
\end{lemma}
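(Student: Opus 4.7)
My plan is to adapt the strategy of Proposition~\ref{rigidity lemma when missing two directions} to the weaker hypothesis \eqref{condangle}, handling the boundary contribution via the $L^\infty$-bound on $v_1|_{\partial\Omega}$. First, since $\Theta(\bv;\Omega)\subsetneqq S^1$, I would choose a unit vector $-\be\notin\Theta(\bv;\Omega)$; then, setting $\bv_\varepsilon\co(\bv\cdot\be+\varepsilon)\be+(\bv\cdot\be^\perp)\be^\perp$, the vector $\bv_\varepsilon$ neither vanishes nor points in the $-\be$ direction anywhere on $\Omega$, so $\theta_\varepsilon\co\angle(\be,\bv_\varepsilon)$ is well-defined on $\Omega$, uniformly bounded by $\pi$, and satisfies the divergence-form equation \eqref{eqthetaeps}.

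Next, I would test \eqref{eqthetaeps} against $\phi_R^2\theta_\varepsilon$, choosing $\phi_R=\phi_{1,R}$ when $\Omega=\Omega_\infty$ and $\phi_R=\phi_{log,R}$ when $\Omega=\mbR_+^2$, integrate by parts on $\Omega$, and let $\varepsilon\to 0$ (using the $W^{2,1}_{loc}$-regularity of $\bv$ exactly as in the proof of Theorem~\ref{rigidity in the plane}). This produces an energy identity of the form $D_R=I_R+B_R$, analogous to \eqref{energy identity in a strip}, where
\[
D_R\co\int_\Omega\phi_R^2\frac{|v_1\nabla v_2-v_2\nabla v_1|^2}{|\bv|^2}\,dx,\quad I_R\co-2\int_\Omega\phi_R\theta\,\nabla\phi_R\cdot(v_1\nabla v_2-v_2\nabla v_1)\,dx,
\]
$B_R\co\int_{\partial\Omega}\phi_R^2\theta\,v_1\partial_{\bn}v_2\,dx_1$, and $\theta$ is the bounded a.e.-limit of $\theta_\varepsilon$. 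By Cauchy--Schwarz, $|I_R|\le CD_R^{1/2}\bigl(\int_\Omega|\nabla\phi_R|^2|\bv|^2\,dx\bigr)^{1/2}$, and the growth assumptions \eqref{growth at horizontal far fields}, \eqref{growth at halfplane far fields} give $\int_\Omega|\nabla\phi_R|^2|\bv|^2\,dx\le C$ uniformly in $R$, as in the proofs of Theorem~\ref{rigidity in the plane} and Proposition~\ref{rigidity lemma when missing two directions}.

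The main obstacle is controlling $B_R$, since hypothesis \eqref{condangle} does not prevent $-\be$ from lying in $\Theta(\bv;\bar{\Omega})\setminus\Theta(\bv;\Omega)$, and therefore $\theta$ may have jumps on $\partial\Omega$ at points where $v_1$ changes sign. I would exploit the boundary identity $v_1\partial_{\bn}v_2=\mp\tfrac{1}{2}\partial_{x_1}(v_1^2)$ on the top/bottom of $\partial\Omega$ (immediate from $\Div\bv=0$ and $v_2|_{\partial\Omega}=0$). Splitting the boundary integral along $\partial\Omega$ according to the sign of $v_1$, on each piece $\theta$ is constant; integrating by parts along $\partial\Omega$ and using the vanishing of $v_1^2$ at every sign-change of $v_1$ to kill the endpoint contributions, I arrive at
\[
|B_R|\le C\|v_1\|_{L^\infty(\partial\Omega)}^2\int_{\partial\Omega}|\partial_{x_1}\phi_R^2|\,dx_1\le C,
\]
uniformly in $R$.

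Inserting the two bounds into $D_R=I_R+B_R$ yields $D_R\le CD_R^{1/2}+C$, hence $D_R\le C'$ with $C'$ independent of $R$. Letting $R\to\infty$ and applying monotone convergence gives \eqref{finitecurvature}.
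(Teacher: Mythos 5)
Your argument is sound whenever the missing direction $-\be$ can be chosen slant ($e_2\neq0$): there $\bv=(v_1,0)$ on $\partial\Omega$ never points in the direction $-\be$, so $\theta_\varepsilon\to\theta$ at every boundary point with $v_1\neq0$, the energy identity with the boundary term $B_R$ is legitimate, and your piecewise integration by parts of $\theta\,v_1\partial_{\bn}v_2=\mp\tfrac12\,\theta\,\partial_{x_1}(v_1^2)$ over the sign-intervals of $v_1|_{\partial\Omega}$ reproduces, with slightly different bookkeeping, the paper's bound $|B_R|\le C\|v_1\|_{L^\infty(\partial\Omega)}^2$; this is exactly Case 1 of the paper's proof, which routes through \eqref{energy identity when missing one direction}.

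The gap is the case where the \emph{only} directions missing from $\Theta(\bv;\Omega)$ are horizontal, e.g.\ $\Theta(\bv;\Omega)=S^1\setminus\{-\bi\}$: then you are forced to take $\be=\bi$, and on the boundary portion where $v_1<0$ the vector $\bv_\varepsilon=(v_1+\varepsilon,0)$ points exactly in the excluded direction $-\bi$, so $\theta_\varepsilon$ (and its limit $\theta$) is \emph{undefined} there rather than merely jumping between two known constants. Two things then need proof that your plan asserts without argument. First, the integration by parts producing $B_R$ is no longer automatic, since $\theta_\varepsilon$ need not extend continuously (or in $H^1$) up to $\partial\Omega$ near the set $\{v_1=-\varepsilon\}$; the paper circumvents this by inserting a second cutoff $\phi_2(x_2)$ vanishing on $\partial\Omega$ and computing the $\delta\to0$ limit of the resulting near-boundary integrals. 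Second, and more substantively, your claim that "on each piece $\theta$ is constant" becomes the statement that the interior limit $\lim_{x_2\to0^+}\theta(x_1,x_2)$ equals $+\pi$ on some maximal intervals of $\{v_1(\cdot,0)<0\}$ and $-\pi$ on others, consistently on each interval; this requires showing that $v_2$ keeps a fixed sign on the component of $\{v_1<0\}$ adjacent to each such interval, which is where the hypothesis $-\bi\notin\Theta(\bv;\Omega)$ together with the slip condition and incompressibility enter (and which also forces these intervals to be half-lines). Without these two steps the horizontal case, the "delicate point" that occupies most of the paper's proof, is not covered.
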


\begin{remark}
Note that the assumption \eqref{condangle} only concerns the information on flow angles in the interior of the domain. For example, if $v_1<0$ on the boundary $\partial\Omega$ and $\Theta(\bv, \Omega)=S^1\setminus \{-\bi\}$, the assumption \eqref{condangle} still holds and  the flows still have finite total curvature. 
\end{remark}
\begin{proof}[Proof of Lemma \ref{finite total curvature lemma when missing one direction}]
Let us assume that $-\be\notin \Theta(\bv;\Omega)$ for some unit vector $\be$.

{\bf Case 1: $\be$ is slant.} In this case, the energy identity \eqref{energy identity when missing one direction} holds. We shall choose the test function $\phi$ in \eqref{energy identity when missing one direction} to be $\phi_R^2$. From this, it is now clear that \eqref{finitecurvature} holds as long as the boundary term $\int_{\partial\Omega}\phi_R^2|v_1|\partial_{\bn}v_2\,d x_1$ is uniformly bounded in $R$. In the case of $\Omega=\Omega_\infty$, we use the incompressibility condition and integrate by part to have
\begin{align*}
\left|\int_{\partial\Omega}\phi_{1,R}^2|v_1|\partial_{\bn}v_2\,d x_1\right|
=\left|\int_{\mbR}\phi_{1,R}\phi_{1,R}'(v_1|v_1|(x_1,1)-v_1|v_1|(x_1,-1))\,d x_1\right|\le C\|v_1\|_{L^\infty(\partial\Omega)}^2.
\end{align*}
Similarly, uniform boundedness also holds when $\Omega=\mbR_+^2$.

{\bf Case 2: $\be$ is horizontal.}
Without loss of generality, we choose $\be=\bi$. We may assume that $\{x\in\partial\Omega| v_1(x)<0\}$ is nonempty, otherwise, it follows from Lemma \ref{rigidity lemma when missing one direction} that the total curvature is $0$. Let $\theta=\angle(\bi,\bv)$ be the angle from $\bi$ to $\bv$. The delicate point is that $\theta$ is only defined in the interior domain and may be multi-valued on the boundary where $v_1<0$. 

First, analogous to \eqref{limit energy identity}, we have the following energy identity
\begin{align}\label{energy2}
\int_{\Omega}\frac{\phi|v_1\nabla v_2-v_2\nabla v_1|^2}{|\bv|^2}\,dx=-\int_{\Omega}\theta\nabla\phi\cdot(v_1\nabla v_2-v_2\nabla v_1)\,dx \quad \text{for all } \phi\in C_0^1(\Omega).
\end{align}
 Take $\phi=\phi_1\phi_2$, where $\phi_1=\phi_1(x)\in C_c^1(\Bar{\Omega})$ and $\phi_2=\phi_2(x_2)$ are to be chosen later.
Hence  \eqref{energy2} can be written as
\begin{align}\label{energy identity when missing i}
\int_{\Omega}\frac{\phi_1\phi_2|v_1\nabla v_2-v_2\nabla v_1|^2}{|\bv|^2}\,dx
=-\int_{\Omega}\theta\phi_2\nabla\phi_{1}\cdot(v_1\nabla v_2-v_2\nabla v_1)\,dx+\sum_{i=1}^2 I_i,
\end{align}
where
\[
I_1= -\int_{\Omega}\theta\phi_1\phi_{2}^{'}v_1\partial_{x_2} v_2\,dx=\int_{\Omega}\theta\phi_1\phi_{2}^{'}v_1\partial_{x_1} v_1\,dx\ \ \text{and}\ 
\ I_2=\int_{\Omega}\theta\phi_1\phi_{2}^{'}v_2\partial_{x_2} v_1\,dx.
\]

Let us first consider the case $\Omega=\mbR_+^2$. Define 
\begin{equation*}
\phi_2(x_2)=\left\{\begin{aligned}
&\frac{x_2}{\delta}, && {\rm if}\ 0\le x_2<\delta,\\
&1, && {\rm if}\ \delta\le x_2<\infty.
\end{aligned}\right.
\end{equation*}
Since $\bv\in C^1(\Bar{\Omega})$ and $v_2=0$ on $\partial\Omega$, it is easy to see that 
\begin{align}\label{J tends to zero}
\lim_{\delta\rightarrow0}I_2=0.
\end{align}
We split $I_1$ into three integrals
\begin{align*}
I_1=I_-+I_0+I_+\vcentcolon=\left(\int_{D_-}+\int_{D_0}+\int_{D_+}\right)\left(\frac{1}{\delta}\int_{0}^{\delta}\theta\phi_1v_1\partial_{x_1} v_1\,dx_2\right)\,dx_1,
\end{align*}
where 
\[
D_0=\{x_1\in\mbR| v_1(x_1,0)=0\}\ \ \text{and } \,\,D_{\pm}=\{x_1\in\mbR| \pm v_1(x_1,0)>0\}.
\]
Similar to \eqref{J tends to zero}, one has
\begin{align}\label{I naught tends to zero}
\lim_{\delta\rightarrow0}I_0=0.
\end{align}

For every $x_1\in D_+$, $\theta$ is continuous  in some neighborhood of $(x_1,0)$ and $\theta(x_1,0)=0$. So by dominated convergence theorem, it also holds
\begin{align}\label{I positive tends to zero}
\lim_{\delta\rightarrow0}I_+=0.
\end{align}

Now we need to deal with the limit of $I_-$. First, assume that $D_-$ contains an interval, say $(a,b)$, which is maximal in the sense that $D_-$ does not contain a larger interval that contains $(a,b)$. Let $U$ be the connected component of $\{x\in\Omega| v_1(x)<0\}$ whose boundary $\partial U$ satisfies  $(a,b)\times \{0\}\subset\partial U$. By the assumption that $-\bi\notin \Theta(\bv;\Omega)$, one can conclude that $v_2\neq0$ all over $U$. If $v_2>0$ in $U$, then by the slip boundary condition \eqref{slip boundary condition}, it holds that $\partial_{x_2}v_2(x_1,0)\ge0$ for all $x_1\in(a,b)$. This, together with the incompressibility condition, implies 
\[
\partial_{x_1}v_1(x_1, 0)\leq 0 \quad \text{for all}\,\, x_1\in (a, b). 
\]
Hence $v_1(\cdot,0)$ is nonincreasing in $(a,b)$. Note that $v_1(\cdot,0)$ must be negative in the maximal interval $(a,b)$. Therefore, $b=\infty$. Similarly, if $v_2<0$ in $U$, then $D_-$ must contain an interval of the form $(-\infty,b)$.

Assume $D_-=(a,\infty)$. Note that $v_2>0$ in $U$. So for every $x_1\in (a,\infty)$, one has $\lim_{x_2\rightarrow0+}\theta(x_1,x_2)=\pi$. It follows from dominated convergence theorem that
\begin{align*}
\lim_{\delta\rightarrow0}I_-=\pi\int_{a}^{\infty}(\phi_1 v_1 \partial_{x_1}v_1)(x_1,0)\,d x_1.
\end{align*}
If $a$ is finite, then $v_1(a,0)=0$. So no matter $a$ is finite or infinite, we can always integrate by part to get
\begin{align}\label{I negative limit}
\lim_{\delta\rightarrow0}I_-=-\frac{\pi}{2}\int_{a}^{\infty}\partial_{x_1}\phi_1(x_1,0) v_1^2(x_1,0)\,d x_1.
\end{align}
Now passing to the limit $\delta\rightarrow0$ in \eqref{energy identity when missing i}, and combining \eqref{J tends to zero}, \eqref{I naught tends to zero}, \eqref{I positive tends to zero} and \eqref{I negative limit} yield that
\begin{align*}
&\int_{\Omega}\frac{\phi_1|v_1\nabla v_2-v_2\nabla v_1|^2}{|\bv|^2}\,dx\\
=&-\int_{\Omega}\theta\nabla\phi_{1}\cdot(v_1\nabla v_2-v_2\nabla v_1)\,dx
-\frac{\pi}{2}\int_{a}^{\infty}\partial_{x_1}\phi_1(x_1,0) v_1^2(x_1,0)\,d x_1
\end{align*}
for all $\phi_1\in C_c^1(\Bar{\Omega})$.
Now we can take $\phi_1=\phi_{log,R}^2$ and follow the same procedure as that for the proof of Theorem \ref{rigidity in the plane}  to get \eqref{finitecurvature}.

If $D_-=(-\infty,b)$ and $v_2<0$ in $U$, then  for every $x_1\in (-\infty,b)$, one has $\lim_{x_2\rightarrow0+}\theta(x_1,x_2)=-\pi$. If $D_-=(-\infty,b)\cup (a,\infty)$, then $v_1(a,0)=v_1(b,0)=0$, $$\lim_{x_2\rightarrow0+}\theta(x_1,x_2)=\pi
\quad \text{for every } x_1\in (a,\infty),$$
and 
$$\lim_{x_2\rightarrow0+}\theta(x_1,x_2)=-\pi\quad \text{for every }x_1\in (-\infty,b).$$ So both cases can be handled similarly.

For the case $\Omega=\Omega_\infty$, define
\begin{equation*}
\phi_2(x_2)=\left\{\begin{aligned}
&\frac{x_2+1}{\delta}, && {\rm if}\ -1\le x_2\le-1+\delta,\\
&1, && {\rm if}\ -1+\delta<x_2<1-\delta,\\
&\frac{1-x_2}{\delta}, && {\rm if}\ 1-\delta\le x_2\le1.
\end{aligned}\right.
\end{equation*}
One just needs to split the integral $I$ into the upper and lower boundary parts and estimate each part using the same method employed in the case $\Omega=\mathbb{R}^2_+$. The details are omitted here. 

Thus, the proof of Lemma \ref{finite total curvature lemma when missing one direction} is completed.
\end{proof}

The major task of this section is to establish the next proposition that characterizes Type III flows. 

\begin{pro}\label{theorem of best constant}
Let $\Omega$ be either $\Omega_\infty$ or $\mbR_+^2$. Let $\bv=(v_1,v_2)\in C^1(\Bar{\Omega})$ be a solution of \eqref{Euler} satisfying \eqref{slip boundary condition}. Assume that $\bv$ satisfies \eqref{growth at horizontal far fields} if $\Omega=\Omega_\infty$, or \eqref{growth at halfplane far fields} if $\Omega=\mbR_+^2$.

{\rm (\romannumeral1)} If $\bv$ satisfies \eqref{finitecurvature}, then the limit
\begin{align}\label{definition of D infinity}
\mathcal{J}_\infty\vcentcolon=-\lim_{R\rightarrow +\infty} \int_{\partial\Omega}\phi_R|v_1|\partial_{\bn}v_2\,dx_1
\end{align}
exists, where $\partial_{\bn}$ is the outer normal derivative. Moreover, it holds that
\begin{align}\label{lower bound of total curvature}
|\mathcal{J}_\infty|\le \frac{2}{\pi}\int_{\Omega}\frac{|v_1\nabla v_2-v_2\nabla v_1|^2}{|\bv|^2}\,dx.
\end{align}

{\rm (\romannumeral2)} If $\bv\in W^{2,1}_{loc}(\Bar{\Omega})$ and $v_1|_{\partial\Omega}\in L^\infty(\partial\Omega)$, then the following statements hold.
\begin{enumerate}[(a)]
\item  $\bv$ satisfies \eqref{finitecurvature} and equality in \eqref{lower bound of total curvature} holds if and only if one of the following three cases occurs.

Case 1. $\Theta(\bv;\Bar{\Omega})=S^1_+$.

Case 2. $\Theta(\bv;\Bar{\Omega})=S^1_-$.

Case 3. $\bv$ is a shear flow.

\item  Let $\Omega=\mbR_+^2$. If $\bv$ is of Type {\rm\uppercase\expandafter{\romannumeral 3}}, say $\Theta(\bv;\Bar{\Omega})=S^1_+$, then $v_1(\cdot,0)$ is nonincreasing in $\mbR$, and it holds that $\underline{v}_{1,-}=-\underline{v}_{1,+}>0$, and that 
\begin{align}\label{total curvature in half plane}
\int_{\Omega}\frac{|v_1\nabla v_2-v_2\nabla v_1|^2}{|\bv|^2}\,dx=\frac{\pi}{2}\underline{v}^2_{1,+}.
\end{align}
where
\[
\underline{v}_{1,-}=\lim_{x_1\to -\infty} v_1(x_1,0)\quad\text{and}\quad  \underline{v}_{1,+}=\lim_{x_1\to +\infty} v_1(x_1,0).
\]

\item  Let $\Omega=\Omega_\infty$. If $\Theta(\bv;\Bar{\Omega})=S^1_+$, then $v_1(\cdot,1)$ is nondecreasing in $\mbR$ and $v_1(\cdot,-1)$ is nonincreasing in $\mbR$, and it holds that
\begin{align}\label{boundary asymptotics in a strip}
\bar{v}_{1,+}^2-\bar{v}_{1,-}^2=\underline{v}_{1,+}^2-\underline{v}_{1,-}^2
\end{align}
and 
\begin{align}\label{total curvature in a strip}
\int_{\Omega}\frac{|v_1\nabla v_2-v_2\nabla v_1|^2}{|\bv|^2}\,dx
=\frac{\pi}{4}\{&\bar{v}_{1,+}|\bar{v}_{1,+}|-\bar{v}_{1,-}|\bar{v}_{1,-}|+\underline{v}_{1,-}|\underline{v}_{1,-}|-\underline{v}_{1,+}|\underline{v}_{1,+}|\},
\end{align}
where
\[
\underline{v}_{1,\pm}=\lim_{x_1\to \pm \infty} v_1(x_1,-1)\quad\text{and}\quad  \bar{v}_{1,\pm}=\lim_{x_1\to \pm \infty} v_1(x_1, 1).
\]
\end{enumerate}
\end{pro}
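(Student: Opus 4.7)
The plan is to prove Part (i) first via a test-function argument, and then deduce Parts (ii)(a)--(c) by specialization. For Part (i), I would exploit \eqref{divergence-form Euler} with a test function involving a single-valued periodic function of the flow angle. Let $\theta = \angle(\bj,\bv)$ and let $\Psi$ be the $2\pi$-periodic triangle wave defined by $\Psi(\theta) = -\tfrac{2}{\pi}\theta$ on $[-\pi/2, \pi/2]$ and $\Psi(\theta) = \tfrac{2}{\pi}(\theta - \pi)$ on $[\pi/2, 3\pi/2]$, so that $\Psi$ is continuous on $S^1$ with $\|\Psi\|_\infty = 1$, $\|\Psi'\|_\infty = 2/\pi$, and by construction $\Psi(\bv)\, v_1 = |v_1|$ on $\partial\Omega$ (using $\theta = -\tfrac{\pi}{2}\sgn(v_1)$ there). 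Using $\nabla\Psi(\bv) = \Psi'(\theta)(v_1\nabla v_2 - v_2\nabla v_1)/|\bv|^2$ and $\nabla\theta\cdot(v_1\nabla v_2 - v_2\nabla v_1) = |v_1\nabla v_2 - v_2\nabla v_1|^2/|\bv|^2$, integration by parts on \eqref{divergence-form Euler} should give
\begin{equation*}
-\int_{\partial\Omega}\phi_R |v_1|\partial_\bn v_2\,dx_1 = \int_\Omega \Psi(\bv)\,\nabla\phi_R\cdot(v_1\nabla v_2 - v_2\nabla v_1)\,dx + \int_\Omega \phi_R\,\Psi'(\theta)\,\frac{|v_1\nabla v_2 - v_2\nabla v_1|^2}{|\bv|^2}\,dx.
\end{equation*}
The second integral is bounded by $\tfrac{2}{\pi}T_R$ where $T_R\vcentcolon=\int\phi_R(|v_1\nabla v_2 - v_2\nabla v_1|^2/|\bv|^2)\,dx$; Cauchy-Schwarz applied to the first integral, using $\|\Psi\|_\infty\le 1$, the growth assumption \eqref{growth at horizontal far fields}/\eqref{growth at halfplane far fields} yielding $\int|\nabla\phi_R|^2|\bv|^2\le C$, and $T - T_R\to 0$, shows the first integral vanishes as $R\to\infty$. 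Hence $\mathcal{J}_\infty$ exists and $|\mathcal{J}_\infty|\le\tfrac{2}{\pi}T$. For rigor, the integration by parts is carried out on the regularization $\bv_\varepsilon = (v_1, v_2+\varepsilon)$ with $\theta_\varepsilon = \angle(\bj,\bv_\varepsilon) \in H^1_{loc}$, then $\varepsilon\downarrow 0$ is taken via dominated convergence.

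For Part (ii)(a), equality $|\mathcal{J}_\infty| = \tfrac{2}{\pi}T$ forces saturation of the second estimate (the first vanishes automatically), requiring $\Psi'(\theta) \equiv \pm 2/\pi$ a.e.\ on $\{|v_1\nabla v_2 - v_2\nabla v_1|>0\}$. Since $\Psi'$ equals $-2/\pi$ on $(-\pi/2, \pi/2)$ and $+2/\pi$ on $(\pi/2, 3\pi/2)$, this forces $v_2$ to have constant sign where $\bv \neq 0$; combined with $v_2 = 0$ on $\partial\Omega$ and the continuity of $\bv$, we obtain $v_2 \ge 0$ or $v_2 \le 0$ in $\Bar{\Omega}$, i.e., $\Theta(\bv;\Bar{\Omega}) \subset S^1_+$ or $S^1_-$. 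Proposition \ref{rigidity lemma when missing one direction} then rules out non-shear flows missing only a horizontal direction, leaving exactly the three stated possibilities. For Part (ii)(b), the monotonicity of $v_1(\cdot,0)$ follows from $v_2 \ge 0$ near $\partial\mbR_+^2$, which gives $\partial_{x_2}v_2 \ge 0$ and hence $\partial_{x_1}v_1 \le 0$ on the boundary by incompressibility. Using $|v_1|\partial_\bn v_2 = \tfrac{1}{2}\partial_{x_1}(v_1|v_1|)$, integrating by parts against $\phi_R = \phi_{\log,R}$, and invoking the limits $v_1(x_1,0)\to\underline{v}_{1,\pm}$ at horizontal infinity, I expect $\mathcal{J}_\infty = \tfrac{1}{2}(\underline{v}_{1,+}^2 + \underline{v}_{1,-}^2)$; together with $T = \tfrac{\pi}{2}\mathcal{J}_\infty$ from (ii)(a), this yields $T = \tfrac{\pi}{4}(\underline{v}_{1,+}^2 + \underline{v}_{1,-}^2)$. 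The identity $\underline{v}_{1,-} = -\underline{v}_{1,+}$ needed to recover \eqref{total curvature in half plane} would then be extracted from a Bernoulli-type far-field pressure balance along the boundary streamline.

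For Part (ii)(c), the strip case, the same framework applies with $\phi_R = \phi_{1,R}$: the sign analysis at each of the two boundaries yields the monotonicity of $v_1(\cdot,\pm 1)$, and evaluating the two boundary contributions explicitly produces \eqref{total curvature in a strip}. The compatibility \eqref{boundary asymptotics in a strip} arises from the horizontal-flux identity $\int_{\partial\Omega}\phi_R v_1\partial_\bn v_2\,dx_1 = \int_\Omega\nabla\phi_R\cdot(v_1\nabla v_2 - v_2\nabla v_1)\,dx$, obtained by testing \eqref{divergence-form Euler} against $\phi_R$, whose right-hand side vanishes as $R\to\infty$ by Cauchy-Schwarz and finite total curvature; this produces a global horizontal momentum balance across the two boundaries. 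The chief obstacle is the rigorous passage to the limit in Part (i): although $\Psi(\bv)$ is globally bounded and continuous, $\nabla\Psi(\bv)$ may not be in $L^2_{loc}$ near the zero set of $\bv$, since $|\nabla\theta|^2 = |v_1\nabla v_2 - v_2\nabla v_1|^2/|\bv|^4$ can fail to be integrable even when the curvature density is. The regularization via $\bv_\varepsilon$ avoids this on $\{v_2 > -\varepsilon\}$ but introduces a new singular set $\{v_1 = 0, v_2 = -\varepsilon\}$; handling both requires careful use of the cutoff $\phi_R$ together with the finite-total-curvature assumption to control all error terms uniformly in $\varepsilon$.
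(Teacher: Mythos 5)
Your overall strategy coincides with the paper's: the triangle wave $\Psi$ you build is exactly the extremal function $g$ with $g(\pm\bi)=\pm1$, $\partial_\theta g=\mp\tfrac{2}{\pi}$ on $S^1_\pm$ that the paper uses in Step~1 of its proof, and your treatment of (ii)(a)--(c) (sign rigidity of $v_2$ from saturation of the inequality, monotonicity of $v_1$ on the boundary from $v_2\ge0$ plus incompressibility, evaluation of $\mathcal{J}_\infty$ via $|v_1|\partial_\bn v_2=\tfrac12\partial_{x_1}(v_1|v_1|)$ and the flux identity $\lim_R\int_{\partial\Omega}\phi_R v_1\partial_\bn v_2\,dx_1=0$) matches the paper's Steps~2--4. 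Two small omissions there: you only prove the "only if'' half of (ii)(a) (the converse, that Cases 1--3 force equality, is handled in the paper via Lemma~\ref{finite total curvature lemma when missing one direction} and the identity \eqref{energy identity when missing one direction}), and when $\Theta(\bv;\Bar{\Omega})\subsetneqq S^1_+$ you need Proposition~\ref{rigidity lemma when missing two directions} as well as Proposition~\ref{rigidity lemma when missing one direction}, since the additional missing direction may be slant rather than horizontal.

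The genuine gap is the one you yourself flag as the "chief obstacle'': justifying the energy identity of Part (i) across the stagnation set. In Part (i) there is \emph{no} hypothesis on $\Theta(\bv)$, so $\bv$ may vanish and may point in every direction; the shift $\bv_\varepsilon=(v_1,v_2+\varepsilon)$ does not remove the singularity but merely relocates it to $\{v_1=0,\ v_2=-\varepsilon\}$, near which $|\nabla\theta_\varepsilon|\sim|x-x_0|^{-1}$ fails to be in $L^2_{loc}$ in two dimensions, so $\Psi(\theta_\varepsilon)$ is not an admissible $H^1_{loc}$ test function and the integration by parts is not justified. (This is unlike the proof of Theorem~\ref{rigidity in the plane}, where the shift works precisely because the hypothesis $-\bi\notin\Theta(\bv)$ guarantees $\bv_\varepsilon$ never vanishes.) The paper's resolution is a different regularization: extend $g$ harmonically to a function $G$ on the closed unit disk via the Poisson kernel and test \eqref{divergence-form Euler} with $\phi\,G\bigl(\bv/\sqrt{|\bv|^2+\varepsilon}\bigr)$. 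Since the argument always lies in the \emph{open} disk where $G$ is smooth, this test function is $C^1$ everywhere including at stagnation points, and the Poisson-kernel bounds $|\partial_\theta G(z)|+(1-|z|)|\nabla_zG(z)|\le C$ let dominated convergence kill the extra term $\varepsilon\,\nabla\bv\,\nabla_zG/(|\bv|^2+\varepsilon)^{3/2}\cdot(v_1\nabla v_2-v_2\nabla v_1)$ as $\varepsilon\to0$. Without this (or an equivalent) device, your passage to the limit in Part (i) does not go through.
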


\begin{remark}
The inequality \eqref{lower bound of total curvature} can be viewed as a sort of trace inequality. Note that the total curvature, which can be written as $\int_\Omega |\nabla P|^2|\bv|^{-2}\,dx$, is a weighted homogeneous $H^1$-norm of $P$. It follows from the  Bernoulli's law (cf. \cite{Bat})  that $\frac{d}{dx_1}(P|_{\partial\Omega}+\frac{1}{2}v_1^2|_{\partial\Omega})=0$. Note that  $\mathcal{J}_\infty$ depends on the asymptotics of $v_1|v_1|$ at the boundaries, hence relating to the trace of $P$.
\end{remark}

\begin{remark}
Proposition \ref{theorem of best constant} part {\rm (\romannumeral2)}-(a) plays a crucial role in the proof of Theorem \ref{rigidity in a strip} {\rm (\romannumeral2)}. Proposition \ref{theorem of best constant} part {\rm (\romannumeral2)}-(b) and part {\rm (\romannumeral2)}-(c) tell a potential streamline pattern for a Type III flow if the flow is symmetric about axes. This will actually serve as a guide in our search for Type III flows in Section \ref{existence section}. 
\end{remark}

To prove Proposition \ref{theorem of best constant}, we shall first derive energy identities concerning the total curvature. The key observation is to test the divergence form equation \eqref{divergence-form Euler} by zeroth-degree homogeneous functions of $\bv$.

In what follows, $B$ denotes the open unit disk centered at the origin. Recall that $\bi$ denotes the unit vector $(1,0)$. For a function $g=g(y)\in C^1(S^1)$, denote $\partial_\theta g$ the derivative of $g$ with respect to arc length $\theta$. For a function $f=f(x)$, with a slight abuse of notation, $\partial_\theta f$ also denotes the angular derivative $x^\perp\cdot \nabla f(x)$.


\begin{lemma}\label{identity via homogeneous function}
Let $\Omega$ be $\mbR^2$, $\mbR_+^2$, or $\Omega_\infty$, respectively,  and $\bv=(v_1,v_2)\in C^1(\Bar{\Omega})$ be a solution of \eqref{divergence-form Euler} satisfying the slip boundary condition \eqref{slip boundary condition} (if $\partial\Omega$ is nonempty) and finite total curvature condition \eqref{finitecurvature}. Assume that $\bv$ satisfies the corresponding growth condition \eqref{growth at far fields}, \eqref{growth at halfplane far fields}, or \eqref{growth at horizontal far fields}, respectively. 
Let $g: S^1\rightarrow\mbR$ be a Lipschitz continuous and piecewise smooth function. Then it holds that
\begin{align}\label{energy identity with homogeneous G}
\int_{\Omega} \partial_\theta g\left(\frac{\bv}{|\bv|}\right)\frac{|v_1\nabla v_2-v_2\nabla v_1|^2}{|\bv|^2}\,dx
=\lim_{R\rightarrow\infty}\int_{\partial\Omega}\phi_R v_1\partial_{\bn}v_2 g\left(\sgn(v_1)\bi\right) \,dx_1,
\end{align}
where the right hand side of \eqref{energy identity with homogeneous G} vanishes  in the case $\Omega=\mbR^2$.
\end{lemma}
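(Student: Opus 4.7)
The plan is to view the identity \eqref{energy identity with homogeneous G} as what one obtains by testing the divergence-form equation \eqref{divergence-form Euler} against $\phi_R G(\bv)$, where $G:\mbR^2\setminus\{0\}\to\mbR$ is the zeroth-degree homogeneous extension $G(\bu)\vcentcolon=g(\bu/|\bu|)$ of $g$. The key algebraic observation is the chain-rule identity, valid a.e.\ on $\{\bv\neq 0\}$,
\[
\nabla [G(\bv)]\cdot(v_1\nabla v_2-v_2\nabla v_1)=(\partial_\theta g)\!\left(\frac{\bv}{|\bv|}\right)\frac{|v_1\nabla v_2-v_2\nabla v_1|^2}{|\bv|^2},
\]
which follows from $\nabla_\bu G(\bu)=|\bu|^{-2}(\partial_\theta g)(\bu/|\bu|)\,\bu^\perp$ combined with the cancellation $\bu^\perp\cdot D\bv\big|_{\bu=\bv}=v_1\nabla v_2-v_2\nabla v_1$; this produces exactly the integrand on the LHS of \eqref{energy identity with homogeneous G}.

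Because $G$ is singular on the stagnation set $\{\bv=0\}$, I first regularize by picking a smooth cutoff $\eta_\delta:[0,\infty)\to[0,1]$ with $\eta_\delta(t)=0$ for $t\leq\delta/2$ and $\eta_\delta(t)=1$ for $t\geq\delta$, and setting $G_\delta(\bu)\vcentcolon=\eta_\delta(|\bu|)g(\bu/|\bu|)$, extended by $0$ at $\bu=0$. Since $g$ is Lipschitz on $S^1$, $G_\delta$ is Lipschitz on $\mbR^2$; hence $G_\delta(\bv)\in W^{1,\infty}_{loc}(\Bar{\Omega})$ and $\phi_R G_\delta(\bv)$ is a legitimate test function. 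Plugging it into the weak form of \eqref{divergence-form Euler} and using the slip boundary condition $v_2=0$ to kill the $v_2\nabla v_1$ contribution on $\partial\Omega$ yields
\[
\int_\Omega \phi_R \nabla[G_\delta(\bv)]\cdot(v_1\nabla v_2-v_2\nabla v_1)\,dx+\int_\Omega G_\delta(\bv)\nabla\phi_R\cdot(v_1\nabla v_2-v_2\nabla v_1)\,dx=\int_{\partial\Omega}\phi_R G_\delta(\bv)\, v_1\partial_\bn v_2\,dx_1.
\]

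Sending $\delta\to 0$ with $R$ fixed: the gradient $\nabla[G_\delta(\bv)]$ splits into a principal piece $\eta_\delta(|\bv|)\nabla[g(\bv/|\bv|)]$, whose contribution converges to $\int_\Omega \phi_R(\partial_\theta g)(\bv/|\bv|)\frac{|v_1\nabla v_2-v_2\nabla v_1|^2}{|\bv|^2}\,dx$ by dominated convergence, and a remainder piece $g(\bv/|\bv|)\eta_\delta'(|\bv|)\nabla|\bv|$ supported on $\{\delta/2<|\bv|<\delta\}$. Using $|\eta_\delta'|\leq C/\delta$ and $|\bv|\leq\delta$ on that set, Cauchy--Schwarz bounds the remainder by a constant times
\[
\|g\|_{L^\infty(S^1)}\left(\int_{\{|\bv|<\delta\}}|\nabla\bv|^2\,dx\right)^{\!1/2}\left(\int_{\{|\bv|<\delta\}}\frac{|v_1\nabla v_2-v_2\nabla v_1|^2}{|\bv|^2}\,dx\right)^{\!1/2},
\]
and both factors vanish as $\delta\to 0$ by dominated convergence (since $v_1\nabla v_2-v_2\nabla v_1=0$ on $\{\bv=0\}$ and $\chi_{\{|\bv|<\delta\}}\to 0$ pointwise on $\{\bv\neq 0\}$, with $L^1$ dominating functions). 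On $\partial\Omega$ one has $\bv=v_1\bi$, so $G_\delta(\bv)=\eta_\delta(|v_1|)g(\sgn(v_1)\bi)\to g(\sgn(v_1)\bi)$ wherever $v_1\neq 0$, while the prefactor $v_1\partial_\bn v_2$ annihilates the contribution from $\{v_1=0\}$; dominated convergence over the compact set $\text{supp}(\phi_R)\cap\partial\Omega$ passes the boundary integral.

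Finally I send $R\to\infty$. The principal integral converges to $\int_\Omega(\partial_\theta g)(\bv/|\bv|)\frac{|v_1\nabla v_2-v_2\nabla v_1|^2}{|\bv|^2}\,dx$ by dominated convergence (majorant $\|\partial_\theta g\|_\infty$ times the total-curvature density). The cutoff-error term is controlled via Cauchy--Schwarz by
\[
\|g\|_{L^\infty(S^1)}\left(\int|\nabla\phi_R|^2|\bv|^2\,dx\right)^{\!1/2}\left(\int_{\text{supp}(\nabla\phi_R)}\frac{|v_1\nabla v_2-v_2\nabla v_1|^2}{|\bv|^2}\,dx\right)^{\!1/2},
\]
whose first factor is uniformly bounded in $R$ under the respective growth hypothesis on $\bv$ (exactly as in the proof of Theorem~\ref{rigidity in the plane}) and whose second factor tends to $0$ since $\text{supp}(\nabla\phi_R)$ retreats to infinity while the total-curvature density is globally $L^1$. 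When $\Omega=\mbR^2$ there is no boundary, giving the RHS zero as asserted. The main obstacle throughout is the singular behavior of $G(\bv)$ on the stagnation set; the delicate point is the $\delta\to 0$ step, where the cancellation $|\eta_\delta'|\cdot|\bv|\leq C$ combined crucially with the finite-total-curvature hypothesis is what forces the remainder to vanish.
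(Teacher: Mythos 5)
Your proof is correct, and it reaches \eqref{energy identity with homogeneous G} by a genuinely different regularization than the paper's. The paper also tests \eqref{divergence-form Euler} against $\phi_R$ times a zeroth-order function of $\bv$, but it takes $G$ to be the \emph{harmonic extension} of $g$ to the unit disk (via the Poisson kernel) and regularizes by scaling the argument into the interior, i.e.\ it tests with $\phi\, G\bigl(\bv/\sqrt{|\bv|^2+\varepsilon}\bigr)$; the key estimates there are $|\partial_\theta G(z)|\le C$ and $(1-|z|)|\nabla_z G(z)|\le C$ on $B$, which make the extra (radial-derivative) term carry a factor $\varepsilon(|\bv|^2+\varepsilon)^{-1}|\nabla_z G|\le C$ that vanishes pointwise on $\{\bv\ne 0\}$. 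You instead work directly with the singular homogeneous extension $g(\bu/|\bu|)$ and excise a neighborhood of the stagnation set with the radial cutoff $\eta_\delta(|\bv|)$; your extra term is the $\eta_\delta'$ surface-layer contribution, killed by the cancellation $|\eta_\delta'|\,|\bv|\le C$ together with Cauchy--Schwarz and the finite total curvature. Your route is more elementary (no Poisson-kernel estimates), at the price of the layer estimate; the paper's route avoids any cutoff in $|\bv|$ and handles the stagnation set purely by pointwise dominated convergence. Two small points of precision in your $\delta\to 0$ step: the claim that \emph{both} Cauchy--Schwarz factors vanish needs, for the factor $\int_{\{|\bv|<\delta\}}|\nabla\bv|^2$, the additional observation that $\nabla v_1=\nabla v_2=0$ a.e.\ on $\{\bv=0\}$ (or simply note that this factor is bounded on the compact set $\operatorname{supp}\phi_R$, which already suffices since the other factor tends to $0$ under the convention that the curvature density vanishes on $\{\bv=0\}$); and the a.e.\ chain rule for the merely Lipschitz, piecewise smooth $g$ should be accompanied by the remark (made in the paper) that $v_1\nabla v_2-v_2\nabla v_1=0$ a.e.\ on the preimage of the finitely many non-smooth points of $g$, so the ambiguity of $\partial_\theta g$ there is harmless. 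Neither point affects the validity of the argument.
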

\begin{proof}
First, let $G\in C(\Bar{B})\cap C^\infty(B)$ be the harmonic extension of $g$ to the disk B. It follows from the  Poisson's formula that
\begin{equation*}
    G(z)=\frac{1}{2 \pi}\int_{S^1} \frac{1-|z|^2}{|z-y|^2} g(y) ds(y)\quad \text{for any}\,\, z\in B.
\end{equation*}
 Hence the straightforward computations give for every $z\in B$ that
\[
\nabla _zG(z)=-\frac{z}{\pi}\int_{S^1}\frac{g(y)}{|z-y|^2}\,ds(y)
-\frac{1-|z|^2}{\pi}\int_{S^1}\frac{g(y)(z-y)}{|z-y|^4}\,ds(y).
\]
Then integrating by part yields
\[
\partial_\theta G(z)=z^\perp\cdot\nabla_zG(z)=\frac{1-|z|^2}{2\pi}\int_{S^1}\frac{\partial_\theta g(y)}{|z-y|^2}\,ds(y).
\]
Hence there exists a constant $C>0$ such that
\begin{align}\label{outer function G}
|\partial_\theta G(z)|+(1-|z|)|\nabla_z G(z)|\le C,\ \ {\rm for\ all}\ \ z\in B.
\end{align}
By assumption, $g$ is smooth on some subset $S_*^1$ of $S^1$, where $S^1\setminus S_*^1$ consists of finitely many elements. The straightforward computations show 
\begin{align}\label{limit of tangent derivative}
\lim_{B\ni z\rightarrow y}\partial_{\theta}G(z)=\partial_\theta g(y),\ \ {\rm for\ all}\ y\in S_*^1.
\end{align}

Next, fix any $\varepsilon>0$ and $\phi\in C_c^1(\Bar{\Omega})$. Let us test \eqref{divergence-form Euler} by
\[
\phi(x) G\left(\frac{\bv(x)}{\sqrt{|\bv(x)|^2+\varepsilon}}\right)\in C_0^1(\Bar{\Omega}).
\]
Integrating by part, using the slip boundary condition \eqref{slip boundary condition},
we get the energy identity
\begin{align*}
&\int_{\Omega} \frac{\phi\partial_{\theta}G\left(\frac{\bv}{\sqrt{|\bv|^2+\varepsilon}}\right)|v_1\nabla v_2-v_2\nabla v_1|^2}{|\bv|^2+\varepsilon}\,dx\\
=&-\int_{\Omega} \frac{\varepsilon\phi \nabla\bv\nabla_zG\left(\frac{\bv}{\sqrt{|\bv|^2+\varepsilon}}\right)\cdot(v_1\nabla v_2-v_2\nabla v_1)}{(|\bv|^2+\varepsilon)^{3/2}}\,dx\\
&-\int_{\Omega}G\left(\frac{\bv}{\sqrt{|\bv|^2+\varepsilon}}\right)\nabla\phi\cdot(v_1\nabla v_2-v_2\nabla v_1) \,dx
+\int_{\partial\Omega}\phi v_1\partial_{\bn}v_2 G\left(\frac{v_1}{\sqrt{v_1^2+\varepsilon}},0\right) \,dx_1,
\end{align*}
where we have used the fact 
\begin{align*}
\nabla\left[G\left(\frac{\bv}{\sqrt{|\bv|^2+\varepsilon}}\right)\right]
=\frac{\varepsilon \nabla\bv\nabla_zG\left(\frac{\bv}{\sqrt{|\bv|^2+\varepsilon}}\right)}{(|\bv|^2+\varepsilon)^{3/2}}
+
\frac{\partial_{\theta}G\left(\frac{\bv}{\sqrt{|\bv|^2+\varepsilon}}\right)(v_1\nabla v_2-v_2\nabla v_1)}{|\bv|^2+\varepsilon},
\end{align*}

Note that $|v_1\nabla v_2-v_2\nabla v_1|=0$ a.e. on $\Omega\setminus \Omega_*$ with $\Omega_*:=\{x\in\Omega:|\bv|\neq0,\, \frac{\bv}{|\bv|}\in S_*^1\}$. Therefore, it follows from \eqref{outer function G}, \eqref{limit of tangent derivative}, and the dominated convergence theorem that one has 
\begin{align*}
\lim_{\varepsilon\rightarrow0}\int_{\Omega} \frac{\phi\partial_{\theta}G\left(\frac{\bv}{\sqrt{|\bv|^2+\varepsilon}}\right)|v_1\nabla v_2-v_2\nabla v_1|^2}{|\bv|^2+\varepsilon}\,dx
=\int_{\Omega} \frac{\phi\partial_{\theta}g\left(\frac{\bv}{|\bv|}\right)|v_1\nabla v_2-v_2\nabla v_1|^2}{|\bv|^2}\,dx,
\end{align*}
where the domain of integration is actually
$\Omega_*$.
Thanks to \eqref{outer function G}, there exists a constant $C>0$ such that
\begin{align*}
\frac{\varepsilon|\nabla_zG|\left(\frac{\bv}{\sqrt{|\bv|^2+\varepsilon}}\right)}{|\bv|^2+\varepsilon}\le C.
\end{align*}
Also, note that $G$ is smooth up to boundary points at $S_*^1$. Thus, it holds that
\begin{align*}
\lim_{\varepsilon\rightarrow0}\frac{\varepsilon|\nabla_zG|\left(\frac{\bv(x)}{\sqrt{|\bv(x)|^2+\varepsilon}}\right)}{|\bv(x)|^2+\varepsilon}=0\quad \text{for all}\,\,  x\in \Omega_*.
\end{align*}
 Applying again the dominated convergence theorem gives
\begin{align*}
\lim_{\varepsilon\rightarrow0}\int_{\Omega} \frac{\varepsilon\phi \nabla\bv\nabla_zG\left(\frac{\bv}{\sqrt{|\bv|^2+\varepsilon}}\right)\cdot(v_1\nabla v_2-v_2\nabla v_1)}{(|\bv|^2+\varepsilon)^{3/2}}\,dx
=0.
\end{align*}
The last two integrals in the energy identity can be handled easily.
So passing to the limit $\varepsilon\rightarrow0$ in the energy identity, we arrive at
\begin{align}\label{energy identity with G and phi}
&\int_{\Omega} \frac{\phi\partial_{\theta}g\left(\frac{\bv}{|\bv|}\right)|v_1\nabla v_2-v_2\nabla v_1|^2}{|\bv|^2}\,dx\nonumber\\
=&-\int_{\Omega}g\left(\frac{\bv}{|\bv|}\right)\nabla\phi\cdot(v_1\nabla v_2-v_2\nabla v_1) \,dx
+\int_{\partial\Omega}\phi v_1\partial_{\bn}v_2 g\left(\sgn(v_1)\bi\right) \,dx_1.
\end{align}

Now we choose $\phi=\phi_R$. Applying the Cauchy-Schwarz inequality and the corresponding growth condition \eqref{growth at far fields}, \eqref{growth at halfplane far fields}, or \eqref{growth at horizontal far fields}, one has
\begin{align*}
&\left|\int_{\Omega}g\left(\frac{\bv}{|\bv|}\right)\nabla\phi_R\cdot(v_1\nabla v_2-v_2\nabla v_1) \,dx\right|\\
\le&\|g\|_{L^\infty(S^1)}\left(\int_{\Omega}|\bv|^2|\nabla\phi_R|^2\,dx\right)^{\frac12}
\left(\int_{\{|\nabla\phi_R|\neq0\}}\frac{|v_1\nabla v_2-v_2\nabla v_1|^2}{|\bv|^2} \,dx\right)^{\frac12}\\
\le&C\|g\|_{L^\infty(S^1)}
\left(\int_{\{|\nabla\phi_R|\neq0\}}\frac{|v_1\nabla v_2-v_2\nabla v_1|^2}{|\bv|^2} \,dx\right)^{\frac12}.
\end{align*}
This, together with the finite total curvature condition
\eqref{finitecurvature}, gives
\[
\lim_{R\to \infty}\left|\int_{\Omega}g\left(\frac{\bv}{|\bv|}\right)\nabla\phi_R\cdot(v_1\nabla v_2-v_2\nabla v_1) \,dx\right|=0.
\]
Therefore, sending $R$ to $\infty$ in \eqref{energy identity with G and phi} gives \eqref{energy identity with homogeneous G}.
This completes the proof of the lemma.
\end{proof}

A consequence of Lemma \ref{identity via homogeneous function} is the following corollary.

\begin{coro}
\label{lemma from lemma}
Let $\Omega$ be $\Omega_\infty$ or $\mbR_+^2$, respectively, and $\bv=(v_1,v_2)\in C^1(\Bar{\Omega})$ be a solution of the steady Euler system \eqref{divergence-form Euler} satisfying the slip boundary condition \eqref{slip boundary condition} and finite total curvature condition \eqref{finitecurvature}. Assume that $\bv$ satisfies \eqref{growth at horizontal far fields} and \eqref{growth at halfplane far fields} in the case $\Omega=\Omega_\infty$ and $\Omega=\mbR_+^2$, respectively.  Let $g: S^1\rightarrow\mbR$ be any Lipschitz continuous and piecewise smooth function. Then the following statements hold:

{\rm (\romannumeral1)}
$\lim_{R\rightarrow\infty}\int_{\partial\Omega}\phi_R v_1\partial_{\bn}v_2\,dx_1=0$.

{\rm (\romannumeral2)} 
If $g(\bi)=g(-\bi)$, then
\begin{align*}
\int_{\Omega} \partial_{\theta}g\left(\frac{\bv}{|\bv|}\right)\cdot\frac{|v_1\nabla v_2-v_2\nabla v_1|^2}{|\bv|^2}\,dx=0.
\end{align*}

{\rm (\romannumeral3)} $\mathcal{J}_\infty$
defined in \eqref{definition of D infinity}  exists and satisfies that
\begin{align}\label{D infty formula}
\mathcal{J}_\infty=-\int_{\Omega} \partial_{\theta}g\left(\frac{\bv}{|\bv|}\right)\cdot\frac{|v_1\nabla v_2-v_2\nabla v_1|^2}{|\bv|^2}\,dx
\end{align}
whenever $g(\pm\bi)=\pm 1$.   
\end{coro}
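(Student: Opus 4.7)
The plan is to derive all three parts from the master identity \eqref{energy identity with homogeneous G} of Lemma \ref{identity via homogeneous function} by specializing the function $g$; the work is essentially bookkeeping, and there is no serious analytical obstacle to overcome since the hard analysis is already carried out in Lemma \ref{identity via homogeneous function}.

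For part (i), I would take $g\equiv 1$. Then $\partial_\theta g\equiv 0$, so the left-hand side of \eqref{energy identity with homogeneous G} vanishes, while $g(\sgn(v_1)\bi)\equiv 1$ reduces the right-hand side to $\lim_{R\to\infty}\int_{\partial\Omega}\phi_R v_1\partial_{\bn}v_2\,dx_1$, which therefore equals zero.

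For part (ii), I would apply Lemma \ref{identity via homogeneous function} with the given $g$ satisfying $g(\bi)=g(-\bi)=c$. The slip condition \eqref{slip boundary condition} forces $\bv|_{\partial\Omega}=(v_1,0)$, so $g(\sgn(v_1)\bi)=c$ on $\{v_1\neq 0\}\cap\partial\Omega$, while on the complementary set the prefactor $v_1$ kills the integrand. Hence the boundary integral on the right of \eqref{energy identity with homogeneous G} is exactly $c\int_{\partial\Omega}\phi_R v_1\partial_{\bn}v_2\,dx_1$, which tends to zero by part (i).

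For part (iii), I would choose a Lipschitz and piecewise smooth $g$ with $g(\pm\bi)=\pm 1$ (for instance the restriction $g(y_1,y_2)=y_1$ to $S^1$). On $\partial\Omega$ one has $v_1\,g(\sgn(v_1)\bi)=v_1\sgn(v_1)=|v_1|$ wherever $v_1\neq 0$, and both sides vanish when $v_1=0$, so the right-hand side of \eqref{energy identity with homogeneous G} becomes $\lim_{R\to\infty}\int_{\partial\Omega}\phi_R|v_1|\partial_{\bn}v_2\,dx_1$. This shows simultaneously that the limit defining $\mathcal{J}_\infty$ exists and that $\mathcal{J}_\infty=-\int_\Omega\partial_\theta g(\bv/|\bv|)\tfrac{|v_1\nabla v_2-v_2\nabla v_1|^2}{|\bv|^2}\,dx$, which is \eqref{D infty formula}. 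Independence from the choice of $g$ is then a consistency check via part (ii): for any two admissible $g_1,g_2$, the difference $g_1-g_2$ satisfies the hypothesis of part (ii), so the two representations of $\mathcal{J}_\infty$ agree.
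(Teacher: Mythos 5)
Your proposal is correct and follows essentially the same route as the paper: all three parts are obtained by specializing $g$ in the identity \eqref{energy identity with homogeneous G} of Lemma \ref{identity via homogeneous function} (taking $g\equiv 1$ for (\romannumeral1), using (\romannumeral1) to kill the boundary term for (\romannumeral2), and choosing $g(\pm\bi)=\pm 1$ so that $v_1\,g(\sgn(v_1)\bi)=|v_1|$ on $\partial\Omega$ for (\romannumeral3)). The paper states this in one line; you have merely supplied the bookkeeping, plus a harmless extra consistency check that the representation of $\mathcal{J}_\infty$ is independent of the admissible $g$.
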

\begin{proof}
First, choosing $g\equiv1$ in \eqref{energy identity with homogeneous G} proves Part {\rm (\romannumeral1)}. Part {\rm (\romannumeral2)} follows from \eqref{energy identity with homogeneous G} and Part {\rm (\romannumeral1)}. Part {\rm (\romannumeral3)} is an immediate consequence of \eqref{energy identity with homogeneous G}. This completes the proof.
\end{proof}

We are now in a position to prove Proposition \ref{theorem of best constant}.

\begin{proof}[Proof of Proposition \ref{theorem of best constant}]
The proof is divided into four steps, each dealing with a part of the proposition.

{\bf Step 1.} The existence of $\mathcal{J}_\infty$ has been proved in Corollary \ref{lemma from lemma} {\rm (\romannumeral3)}. We directly get from \eqref{D infty formula} that
\begin{align*}
|\mathcal{J}_\infty|\le \|\partial_{\theta}g\|_{L^\infty(S^1)}\int_{\Omega} \frac{|v_1\nabla v_2-v_2\nabla v_1|^2}{|\bv|^2}\,dx
\end{align*}
for every Lipschitz continuous and piecewise smooth function $g:S^1\rightarrow\mbR$ satisfying $g(\pm\bi)=\pm1$.
It follows from the mean value theorem that such a function $g$ must satisfy 
\[
\|\partial_{\theta}g\|_{L^\infty(S^1)}\ge\frac{2}{\pi}.
\]
This lower bound is attainable by simply choosing $g$ such that $\partial_\theta g=-\frac{2}{\pi}$ on $S^1_+$ and $\partial_\theta g=\frac{2}{\pi}$ on $S^1_-$ ($S^1$ is oriented in the anticlockwise direction). With such a choice of $g$ in \eqref{D infty formula}, we arrive at
\begin{align}\label{explicit formula for D}
\mathcal{J}_\infty=\frac{2}{\pi}\int_{\Omega}\sgn(v_2)\frac{|v_1\nabla v_2-v_2\nabla v_1|^2}{|\bv|^2}\, dx.
\end{align}
This immediately gives rise to \eqref{lower bound of total curvature}.

{\bf Step 2.}
We prove Part (a) in Proposition \ref{theorem of best constant} {\rm (\romannumeral2)}. We first prove the following stronger statement that if $\Theta(\bv;\Bar{\Omega})\subsetneqq S^1$, then the total curvature is finite and equality in \eqref{lower bound of total curvature} holds. For this,  assume $-\be\notin \Theta(\bv;\Bar{\Omega})$ for some unit vector $\be$. If $\be$ is horizontal, then it follows from Proposition \ref{rigidity lemma when missing one direction} that $\bv$ is a shear flow. So the total curvature is zero and equality in \eqref{lower bound of total curvature} holds. If $\be$ is slant, we know from Lemma \ref{finite total curvature lemma when missing one direction} that the total curvature must be finite. Then choosing $\phi=\phi_R$ in \eqref{energy identity when missing one direction} and letting $R$ tend to $\infty$, we get
\begin{align*}
\int_{\Omega}\frac{|v_1\nabla v_2-v_2\nabla v_1|^2}{|\bv|^2}\,dx
=\frac{\pi}{2}\sgn(e_2)\mathcal{J}_\infty.
\end{align*}
This implies that equality in \eqref{lower bound of total curvature} still holds.

Conversely, if equality in \eqref{lower bound of total curvature} holds, we need to show that $\Theta(\bv;\Bar{\Omega})=S^1_+$, or $\Theta(\bv;\Bar{\Omega})=S^1_-$, or $\bv$ is a shear flow. Without loss of generality, one assumes
\begin{align*}
\mathcal{J}_\infty=\frac{2}{\pi}\int_{\Omega}\frac{|v_1\nabla v_2-v_2\nabla v_1|^2}{|\bv|^2}\,dx.
\end{align*}
This, together with \eqref{explicit formula for D}, immediately implies that $v_1\nabla v_2-v_2\nabla v_1\equiv0$ in $\{v_2\le0\}$. If $\{v_2<0\}$ is not empty, we can argue as in the proof of Lemma \ref{trivial curvature lemma} to show $v_2\equiv0$ in $\{v_2<0\}$, and lead to a contradiction. So one has $v_2\ge0$ in $\Omega$, which says $\Theta(\bv,\Bar{\Omega})\subset S^1_+$. If $\Theta(\bv,\Bar{\Omega})= S^1_+$, the proof is done. Finally, if $\Theta(\bv,\Bar{\Omega})\subsetneqq S^1_+$, it follows from Propositions \ref{rigidity lemma when missing one direction} and \ref{rigidity lemma when missing two directions} that $\bv$ must be a shear flow. This completes the proof of Part (a) of {\rm (\romannumeral2)}.

{\bf Step 3.} Suppose that $\Omega=\mbR_+^2$ and $\Theta(\bv;\Bar{\Omega})=S^1_+$. Since $v_2\ge0$ in $\Bar{\Omega}$, it follows from the slip boundary condition \eqref{slip boundary condition} that $\partial_{x_2}v_2(x_1,0)\ge0$ for all $x_1\in\mbR$. Thanks to the incompressibility condition, one has $\partial_{x_1}v_1(x_1,0)\le0$. Thus  $v_1(\cdot,0)$ is nonincreasing in $\mbR$. It follows from Corollary \ref{lemma from lemma} {\rm (\romannumeral1)} and direct computations that $\underline{v}_{1,-}^2=\underline{v}_{1,+}^2$.
Next, we get from \eqref{explicit formula for D} that
\begin{align*}
\int_{\Omega}\frac{|v_1\nabla v_2-v_2\nabla v_1|^2}{|\bv|^2}\,dx
=\frac{\pi}{2}|\mathcal{J}_\infty|=\frac{\pi}{4}\big|\underline{v}_{1,-}|\underline{v}_{1,-}|-\underline{v}_{1,+}|\underline{v}_{1,+}|\big|.
\end{align*}
The total curvature for a Type {\rm\uppercase\expandafter{\romannumeral 3}} steady flow must be strictly positive. Hence we conclude that $\underline{v}_{1,-}=-\underline{v}_{1,+}>0$ and \eqref{total curvature in half plane} holds. This completes the proof of Part (b) of {\rm (\romannumeral2)}.

{\bf Step 4.} Suppose that $\Omega=\Omega_\infty$ and $\Theta(\bv;\Bar{\Omega})=S^1_+$. The proof is similar to that in Step 3. Since $v_2\ge0$ in $\Omega$, the slip boundary condition and the incompressibility condition imply that $v_1(\cdot,1)$ is nondecreasing in $\mbR$ and $v_1(\cdot,-1)$ is nonincreasing in $\mbR$. Hence the identity \eqref{boundary asymptotics in a strip} follows from Corollary \ref{lemma from lemma} {\rm (\romannumeral1)}, and the identity \eqref{total curvature in a strip} follows from \eqref{explicit formula for D} and the monotonicity of $v_1(\cdot,1)$ and $v_1(\cdot,-1)$.

This finishes the proof of Proposition \ref{theorem of best constant}.
\end{proof}


\section{Classification, stability, and quantitative symmetry}\label{section rigidity in strip}

In this section, building upon the results from Section \ref{section steady flows with finite total curvature}, we establish Theorem~\ref{rigidity in a strip}, Corollary \ref{stability of P flow}, and a quantitative version of symmetry for the set of flow angles.

We start with the proof of Theorem~\ref{rigidity in a strip}.

\begin{proof}[Proof of Theorem~\ref{rigidity in a strip}]
The proof of Theorem~\ref{rigidity in a strip} {\rm (\romannumeral1)}  is similar to that of Theorem~\ref{rigidity in the plane} and Proposition \ref{rigidity lemma when missing one direction}, so we omit it here.

To prove Theorem~\ref{rigidity in a strip} {\rm (\romannumeral2)}, let us assume $\Theta(\bv,\Bar{\Omega})\subsetneqq S^1$. By Lemma \ref{finite total curvature lemma when missing one direction}, $\bv$ has finite total curvature. As proved in Step 2 of the proof of Proposition \ref{theorem of best constant}, equality in \eqref{lower bound of total curvature} holds. Then Proposition \ref{theorem of best constant} implies that $\Theta(\bv;\Bar{\Omega})=S^1_+$, or $\Theta(\bv;\Bar{\Omega})=S^1_-$, or $\bv$ is a shear flow.

Finally, Theorem~\ref{rigidity in a strip} {\rm (\romannumeral3)} is an immediate consequence of Lemmas \ref{finite total curvature lemma when missing one direction} and \ref{trivial curvature lemma}.  
So the proof of Theorem~\ref{rigidity in a strip} is completed.
\end{proof}

Next, we prove the stability results stated in Corollary \ref{stability of P flow}.

\begin{proof}[Proof of Corollary \ref{stability of P flow}]
{\rm (\romannumeral1)} Since $\bv(x_1-ct,x_2)$ is a traveling wave solution of the unsteady Euler system, $\bv_c\vcentcolon=(v_1-c,v_2)$ is a  solution of the steady Euler system  \eqref{Euler}. Hence one has
\[
(v_1-c)\partial_{x_1}\omega+v_2\partial_{x_2}\omega=0.
\]
where $\omega$ is the vorticity of the flow $\bv$. 

It follows from the assumption \eqref{smallinCor} that $\partial_{x_2}\omega<0$ in $\Bar{\Omega}$. This implies $v_2=0$ whenever $v_1-c=0$. It immediately follows that $\Theta(\bv_c;\Bar{\Omega})\subset S^1\setminus\{(0,1),(0,-1)\}$. By Theorem \ref{rigidity in a strip} {\rm (\romannumeral2)} (or Proposition \ref{rigidity lemma when missing two directions}), $\bv_c$ is a shear flow, and so is $\bv$.

{\rm (\romannumeral2)}
The proof is similar to that for part (\romannumeral1). First, it follows from the equation \eqref{divergence-form Euler} that $v_1\Delta v_2=v_2\Delta v_1$. When $v_1(x)=0$, by the assumption \eqref{smallincor_2}, $x_2$ must be near some zero of $s$. This, together with the fact that
\[
\Delta v_1(x)=\Delta(v_1-s)(x)+s''(x_2)\neq0, \]
yields
$v_2(x)=0$. As before, it follows that $\Theta(\bv;\Bar{\Omega})\subset S^1\setminus\{(0,1),(0,-1)\}$.
Also, thanks to the incompressibility condition and the slip boundary condition, $v_2$ must be bounded. So \eqref{growth at horizontal far fields} is satisfied. Recalling again Theorem~\ref{rigidity in a strip} {\rm (\romannumeral2)} finishes the proof.
\end{proof}

Theorems \ref{rigidity in the plane} and \ref{rigidity in a strip} say that the set of flow angles of any non-shear steady Euler flows can only be the whole circle or semicircle. Next, we quantify this important property by showing a sort of equal distribution property for the total curvature.
 For every $y\in S^1$, define
\begin{align*}
\Sigma_y\vcentcolon=\left\{x\in\Omega: |\bv(x)|>0, \frac{\bv(x)}{|\bv(x)|}=y \right\}.
\end{align*}
If the total curvature is finite, then the coarea formula (cf. \cite[Chapter 3]{EG}) yields that
\begin{align*}
\mfk(y):=\int_{\Sigma_y}|\bv|^2\left|\nabla\left(\frac{\bv}{|\bv|}\right)\right|\,d\mcH^1 \in L^1(S^1),
\end{align*}
 and the total curvature can be written as
\begin{align}\label{coarea1}
\int_\Omega |\bv|^2\left|\nabla\left(\frac{\bv}{|\bv|}\right)\right|^2\,dx=\int_{S^1}\mfk(y)\,d \theta(y).
\end{align}

\begin{pro}\label{total curvature evenly distributed}
Let $\Omega$ be $\mbR^2$, $\mbR_+^2$, or $\Omega_\infty$, and $\bv=(v_1,v_2)\in C^1(\Bar{\Omega})$ be a solution of \eqref{divergence-form Euler} satisfying \eqref{slip boundary condition} (if $\partial\Omega$ is nonempty) and \eqref{finite total curvature}. Assume that $\bv$ satisfies the corresponding growth constraints \eqref{growth at far fields}, \eqref{growth at halfplane far fields}, or \eqref{growth at horizontal far fields}. Then the following identities hold:

{\rm (\romannumeral1)} If $\Omega=\mbR^2$, then
\begin{align*}
\mfk(y)=\frac{1}{2\pi}\int_\Omega |\bv|^2\left|\nabla\left(\frac{\bv}{|\bv|}\right)\right|^2\,dx,\ \ a.e.\ y\in S^1.
\end{align*}

{\rm (\romannumeral2)} If $\Omega=\Omega_\infty$ or $\Omega=\mbR_+^2$, then
\begin{align*}
\mfk(y)=\frac{1}{\pi}\int_{\{x\in\Omega|\pm v_2\ge0\}}|\bv|^2\left|\nabla\left(\frac{\bv}{|\bv|}\right)\right|^2\,dx,\ \ a.e.\ y\in S^1_{\pm}.
\end{align*}
\end{pro}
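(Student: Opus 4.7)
The plan is to combine the energy identity of Lemma \ref{identity via homogeneous function} with a coarea formula for the unit direction map. Since $|\bv|^2|\nabla(\bv/|\bv|)|^2=|v_1\nabla v_2-v_2\nabla v_1|^2/|\bv|^2$ pointwise on $\{|\bv|>0\}$, the scalar coarea formula applied to any local angular lift of $\bv/|\bv|$ (and patched by a partition of unity) yields, for every bounded Borel $h\colon S^1\to\mbR$,
\[
\int_\Omega h(\bv/|\bv|)\,\frac{|v_1\nabla v_2-v_2\nabla v_1|^2}{|\bv|^2}\,dx \;=\; \int_{S^1} h(y)\,\mfk(y)\,d\theta(y).
\]
Specializing with $h=\partial_\theta g$ rewrites the left-hand side of \eqref{energy identity with homogeneous G} as $\int_{S^1}\partial_\theta g(y)\,\mfk(y)\,d\theta(y)$, so the proposition reduces to an analysis of the distributional derivative of $\mfk$ on $S^1$.

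For part (\romannumeral1), the boundary limit in \eqref{energy identity with homogeneous G} vanishes on $\mbR^2$, hence $\int_{S^1}\partial_\theta g(y)\,\mfk(y)\,d\theta(y)=0$ for every Lipschitz, piecewise smooth $g\colon S^1\to\mbR$. Letting $g$ range over all smooth functions on $S^1$ shows that $\mfk$ has vanishing distributional derivative, hence is a.e.\ constant on $S^1$; comparison with \eqref{coarea1} then identifies the constant with $\frac{1}{2\pi}\int_\Omega|\bv|^2|\nabla(\bv/|\bv|)|^2\,dx$.

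For part (\romannumeral2), observe that $\sgn(v_1)\bi\in\{\bi,-\bi\}$ wherever $v_1\partial_\bn v_2\ne 0$ on $\partial\Omega$, so the boundary limit in \eqref{energy identity with homogeneous G} equals $g(\bi)A_++g(-\bi)A_-$ with $A_\pm\vcentcolon=\lim_{R\to\infty}\int_{\partial\Omega\cap\{\pm v_1>0\}}\phi_R v_1\partial_\bn v_2\,dx_1$. Corollary \ref{lemma from lemma}\,(\romannumeral1) gives $A_++A_-=0$, while \eqref{definition of D infinity} gives $A_--A_+=\mathcal{J}_\infty$; the boundary term therefore simplifies to $\tfrac{\mathcal{J}_\infty}{2}(g(-\bi)-g(\bi))$. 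Choosing $g$ compactly supported in an open arc of $S^1_+\setminus\{\pm\bi\}$, respectively of $S^1_-\setminus\{\pm\bi\}$, annihilates the right-hand side and forces $\mfk$ to be a.e.\ constant on each open semicircle; the two constant values are then identified by integrating $\mfk$ over $S^1_\pm$ via the coarea identity.

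The main technical obstacle is a rigorous use of the coarea formula for $\bv/|\bv|$, which is only $C^1$ on $\{|\bv|>0\}$ and need not admit a global angular lift; I plan to handle this by covering $\{|\bv|>0\}$ with open sets on each of which $\bv/|\bv|$ avoids at least one direction of $S^1$ and thus admits a smooth angular branch $\theta$ with $|\nabla\theta|=|\nabla(\bv/|\bv|)|$, then applying the scalar coarea formula on each piece and summing with a partition of unity. The finite total curvature hypothesis ensures absolute integrability throughout, and the fact that $|v_1\nabla v_2-v_2\nabla v_1|^2/|\bv|^2$ vanishes a.e.\ on $\{|\bv|=0\}$ (with the natural convention) makes the extension from $\{|\bv|>0\}$ to $\Omega$ harmless.
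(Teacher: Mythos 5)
Your proposal is correct and follows essentially the same route as the paper: both reduce the statement, via Lemma \ref{identity via homogeneous function} and the coarea formula, to the identity $\int_{S^1}\partial_\theta g\,\mfk\,d\theta(y)=0$ for suitable $g$, and then conclude that $\mfk$ is a.e.\ constant on $S^1$ (resp.\ on each open semicircle, using $g$ with $g(\bi)=g(-\bi)$, which is exactly what your choice of $g$ supported away from $\pm\bi$ achieves), identifying the constants through \eqref{coarea1}. Your explicit evaluation of the boundary term as $\tfrac{\mathcal{J}_\infty}{2}\left(g(-\bi)-g(\bi)\right)$ is a harmless refinement of Corollary \ref{lemma from lemma}, and your partition-of-unity justification of the coarea step only spells out what the paper cites directly.
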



\begin{proof}
If $\Omega=\mbR^2$, we apply Lemma \ref{identity via homogeneous function} and the coarea formula to get
\begin{align}\label{coarea2}
\int_{S^1}\partial_\theta g(y)\mfk(y)\,d\theta(y)=0\quad \text{for all } g\in C^\infty(S^1).
\end{align}
 It immediately follows that
\[
\mfk(y)={\rm const},\ \ {\rm for\ a.e.}\  y\in S^1.
\]
This, together with \eqref{coarea1}, proves Part  (\romannumeral1) of Proposition~\ref{total curvature evenly distributed}.

If $\Omega=\Omega_\infty$ or $\Omega=\mbR_+^2$, by Corollary \ref{lemma from lemma} (\romannumeral2), \eqref{coarea2} holds for all $g\in C^\infty(S^1)$ with $g(\bi)=g(-\bi)$. In particular, it holds for all $g\in C^\infty(S^1)$ supported on $S^1_+$ or $S^1_-$. 
Therefore, $\mfk(y)$ is a constant for $y\in S_+^1$ and $y\in S_-^1$, respectively. 
Hence Part (\romannumeral2) of Proposition ~\ref{total curvature evenly distributed} is proved.
\end{proof}



\section{Existence of Type {\thr} flows}\label{existence section}

This section is devoted to the proof of the existence of Type {\thr} flows, i.e., Theorem~\ref{existence_thm}. The construction is based on seeking a positive solution to some semilinear elliptic equation
\begin{equation}\label{stream function of type3}
\left\{\begin{aligned}
&-\Delta u=f(u), && {\rm in}\ \Omega,\\
&u=0, && {\rm on}\ \partial\Omega,
\end{aligned}\right.
\end{equation}
where $\Omega$ is a subset of $\mbR_+^2$ or $\Omega_\infty$. 

\subsection{Classical monotonicity and symmetry results}
Let $Q\vcentcolon=(0,\infty)^2$ be the first quadrant of the plane and $E_\infty\vcentcolon=(0,\infty)\times(-1,1)$ the half-infinite strip. We first recall a monotonicity and symmetry property for positive solutions to \eqref{stream function of type3}.

\begin{lemma}\label{lem6.1} Assume that $f\in C^1(\mathbb{R})$.
Let $u\in C^2(\Bar{\Omega})$ be a positive and bounded solution to \eqref{stream function of type3}. If $\Omega=Q$, it holds that
\begin{align}\label{monotonicity in first quardrant}
\partial_{x_1}u>0\ {\rm and}\ 
\partial_{x_2}u>0\ {\rm in}\ Q.
\end{align}
If $\Omega=E_\infty$, it holds that
\begin{align}
&\partial_{x_1}u>0\ {\rm in}\ E_\infty,\label{monotonicity in half strip}\\
&\partial_{x_2}u>0\ {\rm in}\ (0,\infty)\times(-1,0)\label{monotonicity in a quarter strip},
\end{align}
and
\begin{align}\label{symmetry in half strip}
u(x_1,-x_2)=u(x_1,x_2)\ {\rm for}\ x\in E_\infty.
\end{align}
\end{lemma}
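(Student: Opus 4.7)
The plan is to prove both parts of Lemma \ref{lem6.1} by the method of moving planes of Gidas--Ni--Nirenberg, in the form for positive bounded solutions on unbounded domains developed by Berestycki--Caffarelli--Nirenberg. For any reflection comparison $u_\lambda$ versus $u$, the difference $w_\lambda = u_\lambda - u$ solves a linear equation
\[
-\Delta w_\lambda = c_\lambda(x)\,w_\lambda, \qquad \|c_\lambda\|_{L^\infty} \leq \|f'\|_{L^\infty(\operatorname{range} u)},
\]
with the coefficient $c_\lambda$ obtained from $f\in C^1$ and the boundedness of $u$ via the mean value theorem. The key enabling tool will be the BCN narrow-slab maximum principle: on an unbounded slab whose width in one direction is sufficiently small relative to $\|c_\lambda\|_{L^\infty}$, a bounded subsolution that is nonnegative on the boundary is nonnegative inside. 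This replaces the elementary bounded-domain maximum principle throughout the argument.

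For $\Omega = Q$, by the symmetry of the problem in $x_1$ and $x_2$ it is enough to prove $\partial_{x_1} u > 0$. For $\lambda > 0$, set $\Sigma_\lambda = \{(x_1, x_2) \in Q : 0 < x_1 < \lambda\}$ and $w_\lambda(x) = u(2\lambda - x_1, x_2) - u(x_1, x_2)$. Then $w_\lambda \geq 0$ on $\partial\Sigma_\lambda$, with $w_\lambda(0, x_2) = u(2\lambda, x_2) > 0$. For sufficiently small $\lambda$ the narrow-slab principle gives $w_\lambda \geq 0$ in $\Sigma_\lambda$. Letting
\[
\lambda^{*} = \sup\{\lambda > 0 : w_\mu \geq 0 \text{ in } \Sigma_\mu \text{ for all } 0 < \mu \leq \lambda\},
\]
the plan is to establish $\lambda^{*} = +\infty$ by contradiction. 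If $\lambda^{*} < \infty$, the strong maximum principle forces $w_{\lambda^{*}} > 0$ in $\Sigma_{\lambda^{*}}$, since the alternative $w_{\lambda^{*}} \equiv 0$ would give $u(2\lambda^{*}, x_2) = u(0, x_2) = 0$, contradicting $u > 0$ in $Q$. To extend past $\lambda^{*}$ I would decompose $\Sigma_{\lambda^{*} + \varepsilon}$ into a large compact piece on which strict positivity persists by continuity, and a thin residual strip of width $2\varepsilon$ near $\{x_1 = \lambda^{*}\}$ on which the narrow-slab principle reapplies. Once $\lambda^{*} = \infty$, monotonicity $\partial_{x_1} u \geq 0$ follows by writing $x_1 = \lambda - s$ and letting $s \downarrow 0$; strictness then comes from the strong maximum principle applied to $\partial_{x_1} u$, which solves the same linearized equation.

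For $\Omega = E_\infty$, the monotonicity \eqref{monotonicity in half strip} follows from the same $x_1$-moving-plane scheme, now with $\Sigma_\lambda$ sitting inside an $x_2$-bounded strip. For the symmetry \eqref{symmetry in half strip}, I will run a moving plane in $x_2$: for $\lambda \in (-1, 0]$, set $T_\lambda = \{x \in E_\infty : -1 < x_2 < \lambda\}$ and $\tilde w_\lambda(x) = u(x_1, 2\lambda - x_2) - u(x_1, x_2)$. Then $\tilde w_\lambda \geq 0$ on $\partial T_\lambda$ with $\tilde w_\lambda(x_1, -1) = u(x_1, 2\lambda + 1) > 0$. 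Since the $x_2$-width of $T_\lambda$ is at most $1$, the same initialization-and-continuation scheme yields $\tilde w_0 \geq 0$, i.e.\ $u(x_1, -x_2) \geq u(x_1, x_2)$ for $-1 < x_2 < 0$. The reflected function $\tilde u(x_1, x_2) = u(x_1, -x_2)$ solves the same problem on the same domain, so applying the argument to $\tilde u$ gives the reverse inequality, proving \eqref{symmetry in half strip}. Finally, \eqref{monotonicity in a quarter strip} follows from the strict form of the $x_2$-moving-plane inequality combined with the strong maximum principle applied to $\partial_{x_2} u$.

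The main obstacle throughout is the unboundedness of the moving-plane slabs in one direction: $\Sigma_\lambda$ is unbounded in $x_2$ in the first-quadrant case and $T_\lambda$ is unbounded in $x_1$ in the strip case, so the classical bounded-domain maximum principle is not available at any stage. The BCN narrow-slab principle is tailored precisely to this situation, since it requires only boundedness of $w_\lambda$ together with smallness of the slab width in the perpendicular direction, both of which hold throughout. This is the same mechanism that drives the initial step at small $\lambda$ and the compact-plus-thin-tail decomposition used in the continuation past $\lambda^{*}$.
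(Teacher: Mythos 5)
Your overall strategy coincides with the paper's: the paper proves this lemma by citing \cite[Theorem 1.3]{BN} for \eqref{monotonicity in half strip} and \cite[Theorem $1.1'$]{BCN} for the quadrant statement and for \eqref{monotonicity in a quarter strip}--\eqref{symmetry in half strip}, with the one substantive remark that the moving planes can be \emph{initiated} via the narrow-domain maximum principle of \cite[Theorem 1.6]{BCN96}. Your initialization step, the Hopf/strong-maximum-principle upgrade from $\partial_{x_1}u\ge 0$ to $\partial_{x_1}u>0$, and the two-sided reflection argument for \eqref{symmetry in half strip} are all correct and are exactly what those references do.

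There is, however, a genuine gap in your continuation step past $\lambda^{*}$, and it sits precisely where the cited theorems do their hard work. You propose to cover $\Sigma_{\lambda^{*}+\varepsilon}$ by ``a large compact piece on which strict positivity persists by continuity'' plus ``a thin residual strip of width $2\varepsilon$ near $\{x_1=\lambda^{*}\}$.'' No such decomposition exists: $\Sigma_{\lambda^{*}+\varepsilon}=(0,\lambda^{*}+\varepsilon)\times(0,\infty)$ is unbounded in $x_2$ at \emph{every} value of $x_1$, so after removing a compact set the residual region still contains $(0,\lambda^{*}-\varepsilon)\times(M,\infty)$, a set whose width in the $x_1$-direction is $\lambda^{*}-\varepsilon$ and hence not small. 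On that region you have neither a uniform positive lower bound for $w_{\lambda^{*}}$ (pointwise positivity on a non-compact set does not survive the perturbation $\lambda^{*}\to\lambda^{*}+\varepsilon$, since $\inf w_{\lambda^{*}}$ there may be $0$) nor narrowness (so the BCN maximum principle does not apply). The same defect appears in the strip case for $T_{\lambda^{*}+\varepsilon}$, which is unbounded in $x_1$ and has $x_2$-width up to $1+\lambda^*+\varepsilon$, not small relative to $\|f'\|_{L^\infty}^{-1/2}$ in general. Closing this requires an additional mechanism — in \cite{BCN} one analyzes limits of translates $u(\cdot+t e_2)$ as $t\to\infty$, which solve the same equation on the limiting half-plane (resp.\ $u(\cdot+te_1)$ on the limiting full strip, where Lemma \ref{lemma of asymptotics}-type convergence to the one-dimensional profile is what saves the day), and one invokes the classification of such limits to rule out a negative infimum escaping to infinity. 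Either supply that argument or, as the paper does, cite \cite[Theorem $1.1'$]{BCN} and \cite[Theorem 1.3]{BN} for the continuation and keep only the initialization explicit.
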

\begin{proof}
The estimate
\eqref{monotonicity in half strip} is due to \cite[Theorem 1.3]{BN}. While \eqref{monotonicity in first quardrant}, \eqref{monotonicity in a quarter strip} and \eqref{symmetry in half strip} are essentially due to \cite[Theorem $1.1^\prime$]{BCN}. Taking \eqref{monotonicity in a quarter strip} for example, one can check that \cite[Theorem 1.6]{BCN96} still holds when the domain there is replaced by $(0,\infty)\times(-1,-1+\delta)$ with sufficiently small $\delta$. So as in the proof of \cite[Theorem $1.1^\prime$]{BCN}, the moving plane method can be initiated. Due to the homogeneous Dirichlet boundary condition for $u$, one does not even need \cite[Lemma 2.6]{BCN} to push the moving planes upwards until they reach the positive $x_1$-axis. This completes the proof of the lemma.
\end{proof}

\begin{remark}
Assuming $f(0)\ge0$ in Lemma \ref{lem6.1}, then the Hopf Lemma applies, so that if $\Omega=Q$, one has
\begin{align}\label{boundary velocity1}
\partial_{x_1}u(0,x_2)>0\ {\rm for}\ x_2>0,\ {\rm and}\ \partial_{x_2}u(x_1,0)>0\ {\rm for}\ x_1>0,
\end{align}
and that if $\Omega=E_\infty$, one has
\begin{align}\label{boundary velocity2}
\partial_{x_1}u(0,x_2)>0\ {\rm for}\ -1<x_2<1,\ {\rm and}\ \pm\partial_{x_2}u(x_1,\pm1)<0\ {\rm for}\ x_1>0.
\end{align}
\end{remark}

\subsection{Existence of Type-III flows  in half-plane}
Let $f\in C^2(\mbR)$ be an odd function, and satisfy
\[
f(\pm1)=0, \quad f'(0)>0, \quad f'(\pm1)<0,
\]
and that $f(s)/s$ is strictly decreasing on $(0,1)$. Note that $f(s)=s-s^3$ (which corresponds to the so called Allen-Cahn equation) is a particular case.
It was proved in \cite{DFP} that there exists a unique solution $u\in C^\infty(\Bar{Q})$ to the problem \eqref{stream function of type3} with $\Omega=Q$ such that $0<u<1$ in $Q$. 
Let
\begin{equation}\label{oddextension}
    \tilde{u}(x_1,x_2) =\left\{
    \begin{aligned}
        & u(x_1, x_2),\quad &&\text{if}\,\, (x_1,x_2)\in Q,\\
        & -u(-x_1, x_2),\quad &&\text{if}\,\, (-x_1,x_2)\in Q.
    \end{aligned}
    \right.
\end{equation}
Then $\Tilde{u}$ solves the elliptic equation in \eqref{stream function of type3} in $\mbR_+^2$. Denote $\bv=\nabla^\perp \Tilde{u}=(-\partial_{x_2}\Tilde{u},\partial_{x_1}\Tilde{u})$. Note that $|\bv|=|\nabla \Tilde{u}|$ is bounded by a standard gradient estimate. Then $\bv$ is a bounded and smooth solution to the Euler system \eqref{Euler} in $\mbR_+^2$ satisfying the slip boundary condition \eqref{slip boundary condition}. It follows from Theorem \ref{rigidity in a strip} {\rm (\romannumeral2)} and \eqref{monotonicity in first quardrant} that $\bv$ must be of Type {\thr}. Note that by uniqueness, $u$ is symmetric with respect to the line $x_2=x_1$. Moreover, by \cite[Lemma 6]{DFP}, $\bv$ converges to a shear flow uniformly for $x_2$ (resp. $x_1$) in bounded intervals as $x_1\rightarrow+\infty$ (resp. $x_2\rightarrow+\infty$). Incorporating all the information and remembering \eqref{monotonicity in first quardrant} and \eqref{boundary velocity1}, the Type {\thr} flow constructed above exhibits streamlines shown in Figure \ref{figure of type three flow in half plane}. This flow has exactly one stagnation point, which is also a saddle point of streamlines, located at the origin.

\begin{figure}[h]
\centering
\includegraphics[height=6cm, width=14cm]{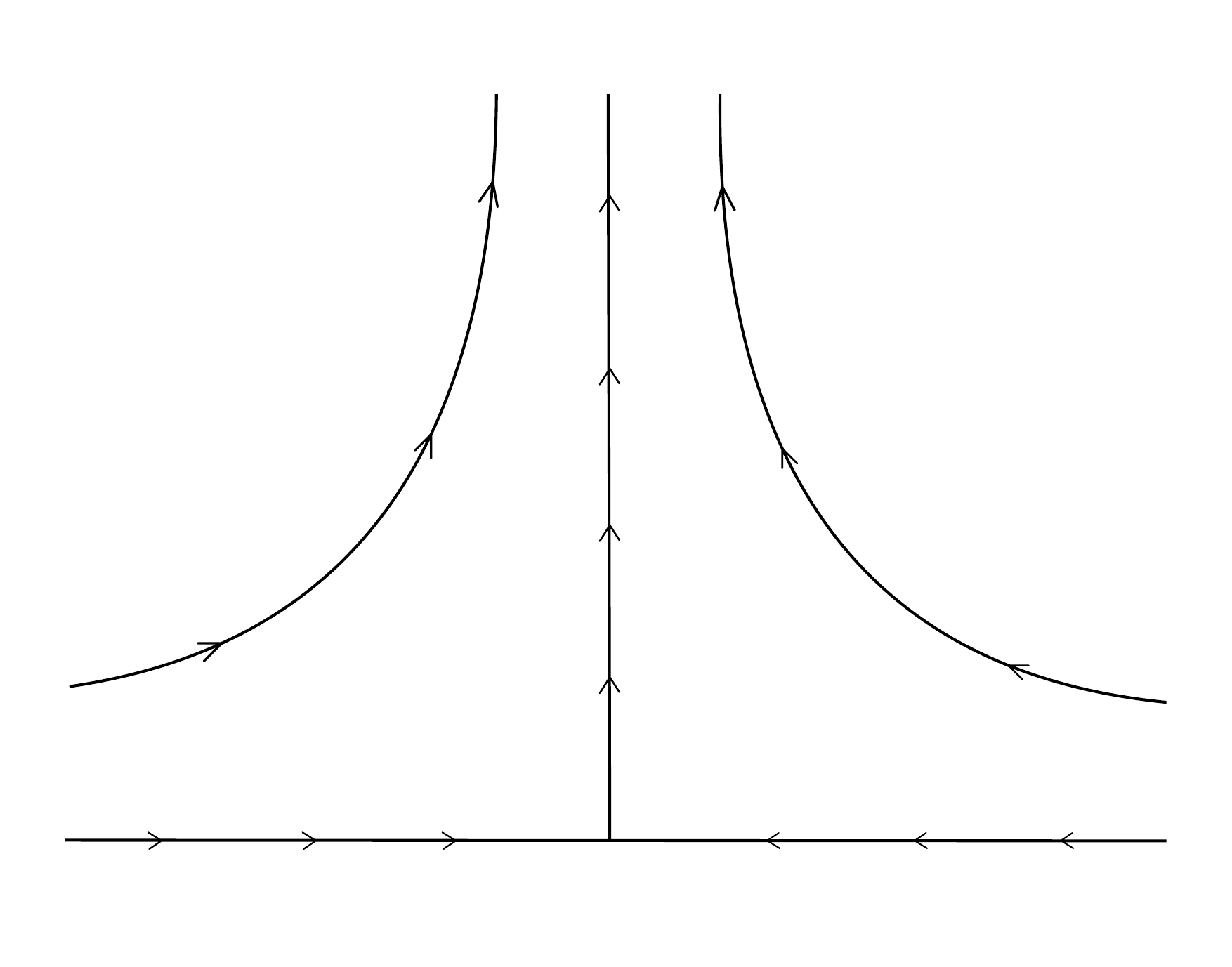}
\caption{Type {\thr} flow in $\mbR_+^2$}
\label{figure of type three flow in half plane}
\end{figure}

Obviously, by an odd reflection about the $x_1$-axis, the solution $\Tilde{u}$ defined in \eqref{oddextension} can be further extended to an entire solution to \eqref{stream function of type3} in $\mbR^2$. Such a solution is called a saddle solution with four ends, which is a special case of the so-called finite-end solutions. Roughly speaking, an end is a direction toward which the flow converges to a shear flow. For more studies on the existence, qualitative properties, and classification of finite-end solutions to the Allen-Cahn equation in two-dimensional case, one may refer to \cite{ACM,DKP,DKPW,Gui0,Gui,GLW,KLP1,KLP2,KLP3,KLPW,WW} and references therein.

\subsection{Existence of Type-III flows  in a strip}

We shall construct the stream function of a Type {\thr} flow in $\Omega_\infty$ by solving a bounded solution $u$ to the problem
\begin{eqnarray}\label{slee in a strip}
\left\{\begin{aligned}
&-\Delta u=f(u), & {\rm in}\ \Omega_\infty,\\
&u=0, & {\rm on}\ \partial\Omega_\infty,\\
&x_1 u>0, & {\rm in}\ \{x\in\Omega_\infty| x_1\neq0\}.
\end{aligned}\right.
\end{eqnarray}
In this subsection, let $f\in C^1(\mbR)$ satisfy
\begin{equation}\label{fproperty1}
    f(s)=-f(-s),\quad |f(s)|\le M\quad \text{for all}\,\, s\in \mathbb{R} \,\, \text{and some }M>0, \,\, 
    \end{equation}
    together with
    \begin{equation}\label{fproperty2}
 f'(0)>\frac{\pi^2}{4}, \quad \text{and}\,\,  \left(\frac{f(s)}{s}\right)'>0 \,\,\text{for }s>0.
\end{equation}
 A typical example of such functions is $f(s)= \lambda\arctan(s)$ with $\lambda>\frac{\pi^2}{4}$. 

The restriction of a solution to \eqref{slee in a strip} to $E_\infty$ is a positive solution to \eqref{stream function of type3} in $E_\infty$. Conversely, given a positive solution $u$ of the problem \eqref{stream function of type3} in $\Omega=E_\infty$. If we extend $u$ to be $\tilde{u}$ via the extension in \eqref{oddextension} with $Q$ replaced by $E_\infty$ and still denote $\tilde{u}$ by $u$, then ${u}$ is a solution to \eqref{slee in a strip}.

We shall use the method of sub- and supersolutions to construct a positive and bounded solution to \eqref{stream function of type3} in $E_\infty$. 
In the following, a substantial portion of our argument is standard and can be located in references such as \cite{BHR,BL,DFP}. Nevertheless, for the reader's convenience, we provide essential elaborations in our proof.

We start with the existence and uniqueness of a one-dimensional solution.

\begin{lemma}\label{lemma of existence and uniqueness of 1d solution}
Assume that $f\in C^1(\mbR)$ satisfies \eqref{fproperty1} and \eqref{fproperty2}. 
There exists a unique positive solution $\Bar{u}\in C^2([-1,1])$ to the problem
\begin{eqnarray}\label{1d solution}
\left\{\begin{aligned}
&-\Bar{u}''=f(\Bar{u}), & {\rm in}\ (-1,1),\\
&\Bar{u}(\pm1)=0.
\end{aligned}\right.
\end{eqnarray}
\end{lemma}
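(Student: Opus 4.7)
My plan is to obtain existence via direct minimization of the natural energy and uniqueness via a Picone-type identity. Set $F(s)=\int_0^s f(t)\,dt$ and consider
\[
J(u)=\frac{1}{2}\int_{-1}^1 (u')^2\,dx - \int_{-1}^1 F(u)\,dx \quad \text{on } H_0^1(-1,1).
\]
The boundedness $|f|\le M$ gives $|F(s)|\le M|s|$, so by Poincar\'e $J(u)\ge \tfrac{1}{2}\|u'\|_{L^2}^2-CM\|u'\|_{L^2}$, which shows $J$ is coercive and bounded below. The nonlinear term $u\mapsto \int_{-1}^1 F(u)\,dx$ is continuous on $H_0^1(-1,1)$ via the compact embedding into $C([-1,1])$, while the Dirichlet integral is weakly lower semicontinuous, so the direct method produces a minimizer $\bar u\in H_0^1(-1,1)$. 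A standard bootstrap argument then puts $\bar u\in C^2([-1,1])$, and the Euler--Lagrange equation is exactly \eqref{1d solution}.

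To rule out $\bar u\equiv 0$, I would test $J$ against $\varphi_\varepsilon(x)=\varepsilon\cos(\pi x/2)$. Using $F(s)=\tfrac{1}{2}f'(0)s^2+o(s^2)$ near $0$ together with $\int_{-1}^1\cos^2(\pi x/2)\,dx=1$, one computes
\[
J(\varphi_\varepsilon)=\left(\frac{\pi^2}{8}-\frac{f'(0)}{2}\right)\varepsilon^2+o(\varepsilon^2),
\]
which is strictly negative for small $\varepsilon>0$ thanks to $f'(0)>\pi^2/4$. Hence $J(\bar u)<0$, so $\bar u\not\equiv 0$. Since $f$ is odd, $F$ is even and $J(|\bar u|)=J(\bar u)$, so I may assume $\bar u\ge 0$ on $[-1,1]$. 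If $\bar u$ vanished at some $x_0\in(-1,1)$, interior minimality at $x_0$ would force $\bar u'(x_0)=0$, and since $f\in C^1$ with $f(0)=0$, ODE uniqueness applied to the initial data $(x_0,0,0)$ would give $\bar u\equiv 0$, a contradiction. Therefore $\bar u>0$ in $(-1,1)$, which proves existence.

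For uniqueness, suppose $u_1,u_2$ are two positive $C^2$ solutions of \eqref{1d solution}. Using $u_1^2/u_2\in H_0^1(-1,1)$ as a test function in the weak form of the equation for $u_2$ and integrating by parts (this is Picone's identity in one dimension) gives
\[
\int_{-1}^1\left(u_1'-\tfrac{u_1}{u_2}u_2'\right)^2 dx=\int_{-1}^1\left[\tfrac{f(u_1)}{u_1}-\tfrac{f(u_2)}{u_2}\right]u_1^2\,dx.
\]
Interchanging the roles of $u_1$ and $u_2$ and adding yields
\[
\int_{-1}^1\left[\tfrac{f(u_1)}{u_1}-\tfrac{f(u_2)}{u_2}\right](u_1^2-u_2^2)\,dx\ge 0.
\]
By the strict monotonicity of $s\mapsto f(s)/s$ on $(0,\infty)$ (as for the model $f(s)=\lambda\arctan s$), the integrand has a fixed sign opposite to that of $u_1-u_2$, so it must vanish a.e., forcing $u_1\equiv u_2$. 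The main subtle point is to verify that the prescribed direction of monotonicity of $f(s)/s$ renders the above integrand nonpositive pointwise; modulo this sign check, both parts of the lemma follow from entirely standard arguments.
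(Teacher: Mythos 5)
Your argument is correct and, in both halves, takes a genuinely different route from the paper's. For existence the paper uses the method of sub- and supersolutions: $\varepsilon\cos(\tfrac{\pi}{2}x_2)$ is a subsolution for small $\varepsilon$ (this is exactly where $f'(0)>\pi^2/4=\lambda_1$ enters, just as in your computation of $J(\varphi_\varepsilon)$), and $\tfrac12 l(1-x_2^2)$ with $l\ge M$ is a supersolution dominating it; your direct minimization uses the same two ingredients ($|f|\le M$ for coercivity, $f'(0)>\lambda_1$ to force $\inf J<0$) packaged variationally, and is equally valid once you add the routine justifications you sketch (weak continuity of $\int F(u)$, bootstrap, positivity via ODE uniqueness). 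For uniqueness the paper cannot compare two arbitrary positive solutions directly: it first manufactures a third solution $\bar w$ dominating both (from the subsolution $\max\{\bar u,\bar v\}$ and the supersolution $\psi_l$) and then applies the Green/Wronskian identity $0=\int(\bar u\bar w''-\bar w\bar u'')=\int \bar u\bar w\bigl(\tfrac{f(\bar u)}{\bar u}-\tfrac{f(\bar w)}{\bar w}\bigr)$ to the \emph{ordered} pair. Your Picone identity symmetrizes and so dispenses with both the ordering and the auxiliary solution; the only thing you should add is why $u_1^2/u_2\in H^1_0(-1,1)$ is admissible, which follows from Hopf's lemma (or one-dimensional ODE uniqueness) giving $u_2'(\pm1)\neq 0$, so that $u_1^2/u_2$ vanishes linearly at $\pm1$ with bounded derivative. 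Concerning your "sign check": the hypothesis \eqref{fproperty2} as printed, $(f(s)/s)'>0$ for $s>0$, is evidently a typo --- it contradicts the model $f(s)=\lambda\arctan s$ (for which $\arctan(s)/s$ is strictly decreasing) and the paper's own uniqueness step, which explicitly invokes that $f(s)/s$ is strictly decreasing. Under the intended (decreasing) hypothesis the bracket $\tfrac{f(u_1)}{u_1}-\tfrac{f(u_2)}{u_2}$ has sign opposite to $u_1-u_2$, hence its product with $u_1^2-u_2^2$ is nonpositive pointwise, and your summed Picone inequality forces it to vanish, giving $u_1\equiv u_2$; with the hypothesis read literally (increasing) the integrand would be nonnegative and neither your argument nor the paper's would close. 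So your proof is complete once the monotonicity is read in the direction the paper actually uses.
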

\begin{proof}
Since $f(0)=0$ and $f'(0)>\frac{\pi^2}{4}$, for $\varepsilon$ sufficiently small, one has $f(s)\ge \frac{\pi^2}{4}s$ for all $s\in[0,\varepsilon]$. Hence for the same $\varepsilon$, the function $\varphi_\varepsilon(x_2)\vcentcolon=\varepsilon\cos(\frac{\pi}{2}x_2)$ satisfies 
\begin{equation*}
    \left\{\begin{aligned}
&-{\varphi}''_{\varepsilon}\leq f(\varphi_\varepsilon), & {\rm in}\ (-1,1),\\
&\varphi_\varepsilon(\pm1)=0.
\end{aligned}\right.
\end{equation*}
Therefore, $\varphi_\varepsilon$
is a subsolution to the problem \eqref{1d solution}. Note that $|f|\le M$. So for every $l\ge M$, $\psi_l(x_2)\vcentcolon=\frac12 l(1-x_2^2)$ is a supersolution. Now for $\varepsilon$ small, we have $\varphi_\varepsilon\le\psi_l$ in $[-1,1]$. Then by the method of sub- and supersolutions (see, e.g., \cite{Evans}), there exists a solution $\Bar{u}$ satisfying $\varphi_\varepsilon\le\Bar{u}\le\psi_l$ in $[-1,1]$. Obviously, $\Bar{u}$ is positive in $(-1,1)$.

Assume $\Bar{v}\in C^2([-1,1])$ is also a positive solution. Then the function $\max\{\Bar{u},\Bar{v}\}$ is a subsolution. Let $l$ be so large that $\max\{\Bar{u},\Bar{v}\}\le \psi_l$ in $[-1,1]$. Then there exists a third positive solution $\Bar{w}$ between $\max\{\Bar{u},\Bar{v}\}$ and $\psi_l$. Using the elliptic equation and integrating by part yield
\begin{align*}
0=\int_{-1}^{1}(\Bar{u}\Bar{w}''-\Bar{w}\Bar{u}'')\,dx_2=\int_{-1}^{1}\Bar{u}\Bar{w}\left(\frac{f(\Bar{u})}{\Bar{u}}-\frac{f(\Bar{w})}{\Bar{w}}\right)\,dx_2.
\end{align*}
Since $0<\Bar{u}\le\Bar{w}$ in $(-1,1)$, and $\frac{f(s)}{s}$ is strictly decreasing, we conclude that $\Bar{u}=\Bar{w}$. Similarly, we have $\Bar{v}=\Bar{w}$. This completes the proof.
\end{proof}

In what follows, for $k\ge1$, we denote $\Omega_k$ and $E_k$ the open rectangles 
 $(-k,k)\times(-1,1)$ and $(0,k)\times(-1,1)$, respectively.

\begin{lemma}[Existence]\label{lemma of existence}
There exists a bounded solution $u\in C^2(\overline{\Omega_\infty})$ to \eqref{slee in a strip}.
\end{lemma}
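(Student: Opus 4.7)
The plan is to produce the desired solution on $\Omega_\infty$ by first solving an auxiliary Dirichlet problem on the half-strip $E_\infty=(0,\infty)\times(-1,1)$ with zero boundary data, and then extending the result by odd reflection across $\{x_1=0\}$. The half-strip problem will be attacked by approximating with bounded rectangles $E_k=(0,k)\times(-1,1)$ and passing to the limit $k\to\infty$.

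For each $k$ large, I would construct a classical positive solution $u_k\in C^2(\overline{E_k})$ of $-\Delta u_k=f(u_k)$ in $E_k$ with $u_k=0$ on $\partial E_k$, via the standard method of sub- and supersolutions. The one-dimensional profile $\bar u(x_2)$ from Lemma~\ref{lemma of existence and uniqueness of 1d solution}, regarded as a function on $E_k$, is a supersolution: it solves the equation and is nonnegative on $\partial E_k$ (strictly positive on the two vertical sides, zero on the horizontal ones). For a positive subsolution I would fix $\lambda\in(\pi^2/4,f'(0))$, set $\mu=\sqrt{\lambda-\pi^2/4}$, and take
\[
\varphi(x)=\varepsilon\cos\!\left(\tfrac{\pi}{2}x_2\right)\sin\!\left(\mu(x_1-1)\right)
\]
on $K\vcentcolon=[1,1+\pi/\mu]\times[-1,1]$, extended by $0$ outside $K$. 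A direct computation gives $-\Delta\varphi=\lambda\varphi$ in the interior of $K$, and for $\varepsilon$ small (using $f'(0)>\lambda$ and continuity) one has $-\Delta\varphi\le f(\varphi)$ in the distributional sense on $E_k$, while $\varphi\le\bar u$ follows by comparing linear behaviour near $x_2=\pm 1$. The monotone iteration $-\Delta u_k^{n+1}+Mu_k^{n+1}=f(u_k^n)+Mu_k^n$ in $E_k$ with zero boundary data, starting from $u_k^0=\varphi$ and with $M$ chosen so that $s\mapsto f(s)+Ms$ is increasing on $[0,\|\bar u\|_\infty]$, will then yield an increasing sequence sandwiched between $\varphi$ and $\bar u$ whose limit $u_k$ is a classical solution with $\varphi\le u_k\le\bar u$ in $E_k$.

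To pass to the limit $k\to\infty$, I would use that the uniform bounds $0\le u_k\le\bar u$ and $|f(u_k)|\le M$ yield uniform $C^{2,\alpha}$ Schauder estimates (interior and up to the flat Dirichlet boundary) on every compact subset of $\overline{E_\infty}$. A diagonal extraction then produces a subsequence converging in $C^2_{loc}(\overline{E_\infty})$ to some $u\in C^2(\overline{E_\infty})$ that solves $-\Delta u=f(u)$ in $E_\infty$, $u=0$ on $\partial E_\infty$, and $0\le u\le\bar u$. Because the subsolution $\varphi$ is independent of $k$, the inequality $u_k\ge\varphi$ will pass to the limit, so $u\ge\varphi>0$ in the interior of $K$; combined with the strong maximum principle applied to $-\Delta u+Cu=f(u)+Cu\ge 0$ for a Lipschitz bound $C$ of $f$ on $[0,\|\bar u\|_\infty]$, this forces $u>0$ everywhere in $E_\infty$.

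Finally, I will define $\tilde u(x_1,x_2)=u(x_1,x_2)$ for $x_1\ge 0$ and $\tilde u(x_1,x_2)=-u(-x_1,x_2)$ for $x_1<0$. Since $u\equiv 0$ on $\{x_1=0\}$, $\partial_{x_2}^2u$ vanishes there, and the equation together with $f(0)=0$ forces $\partial_{x_1}^2u$ to vanish on $\{x_1=0\}$ as well; this is exactly the compatibility needed for $\tilde u\in C^2(\overline{\Omega_\infty})$. The oddness of $f$ then gives $-\Delta\tilde u=f(\tilde u)$ throughout $\Omega_\infty$, $\tilde u$ vanishes on $\partial\Omega_\infty$, boundedness is inherited from $|\tilde u|\le\|\bar u\|_\infty$, and the sign condition $x_1\tilde u>0$ on $\{x_1\neq 0\}$ is immediate from the strict positivity of $u$ on $E_\infty$. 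The step I expect to require the most care is the nondegeneracy in the limit, i.e.\ ruling out $u\equiv 0$: this is precisely where the spectral hypothesis $f'(0)>\pi^2/4$ enters, since $\pi^2/4$ is the first Dirichlet eigenvalue of $-\partial_{x_2}^2$ on the cross-section $(-1,1)$, and it is what makes the fixed positive subsolution $\varphi$ available on a box $K$ of controlled size.
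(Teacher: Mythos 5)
Your overall strategy is the one the paper uses: the supersolution $\bar u(x_2)$ from Lemma \ref{lemma of existence and uniqueness of 1d solution}, a compactly supported product subsolution built from the spectral gap $f'(0)>\pi^2/4$, monotone iteration on the truncated rectangles, a diagonal compactness argument, and an odd reflection in $x_1$ exploiting the oddness of $f$. Those ingredients are all sound, and your subsolution $\varphi$ is a legitimate variant of the paper's $\ubar{u}$ (the paper additionally shrinks the $x_2$-support to $(-(1-\delta),1-\delta)$, which only makes the comparison $\ubar u\le\bar u$ more immediate).

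There is, however, one step where your order of operations creates a genuine gap: the claim of uniform $C^{2,\alpha}$ estimates ``on every compact subset of $\overline{E_\infty}$'' is not justified at the two corners $(0,\pm 1)$. Boundary Schauder estimates apply near flat boundary portions, not at right-angle corners; for a corner of opening $\pi/2$ with zero Dirichlet data the opening angle is resonant and a solution of $-\Delta u = g$ generically carries an $r^2\log r$ term in its expansion, so $u$ need not be $C^2$ up to the corner without further argument. Your final reflection step (checking that $\partial_{x_1}^2 u$ and $\partial_{x_2}^2 u$ vanish on $\{x_1=0\}$) presupposes exactly this $C^2$ regularity up to the corners. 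The paper sidesteps the issue by reflecting \emph{first}: it extends each approximate solution $u_k$ oddly in $x_1$ to a solution $\tilde u_k$ on $\Omega_{k+1}=(-k-1,k+1)\times(-1,1)$, so that the former corners $(0,\pm1)$ of $E_{k+1}$ become interior points of the flat boundary $\{x_2=\pm1\}$, where the uniform $C^{2,\alpha}(\overline{\Omega_k})$ bound is legitimate; the compactness argument is then run on $\overline{\Omega_\infty}$ directly and the limit is automatically odd in $x_1$. Your proof is repaired by adopting this ordering (or by reflecting the half-strip limit $u$ as a weak solution and only then invoking interior and flat-boundary regularity for the reflected function); as written, the regularity claim at the corners is the missing link.
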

\begin{proof}
First, the one-dimensional solution $\Bar{u}$ constructed in Lemma \ref{lemma of existence and uniqueness of 1d solution} is a supersolution to \eqref{stream function of type3} in $E_\infty$. Next, we need to construct a nonnegative subsolution. Note that there exists a small positive number $\delta$ such that 
\[
f'(0)>\frac{\pi^2}{4(1-\delta)^2}+\delta^2.
\]
So for $\varepsilon$ small, it holds that 
\[
f(s)\ge \left(\frac{\pi^2}{4(1-\delta)^2}+\delta^2\right)s\quad \text{for}\,\, s\in[0,\varepsilon].
\]
Define
\begin{eqnarray}\label{definition of subsolution}
\ubar{u}(x)=\ubar{u}(x;h,\delta)\vcentcolon=\left\{\begin{aligned}
&\varepsilon\sin(\delta (x_1-h))\cos\left(\frac{\pi}{2(1-\delta)}x_2\right), && x\in E_{h,\delta},\\
&0, && x\in\overline{E_\infty}\setminus E_{h,\delta},
\end{aligned}\right.
\end{eqnarray}
where $E_{h,\delta}\vcentcolon=(h,h+\frac{\pi}{\delta})\times(-(1-\delta),1-\delta)$ and $h\ge0$ (it suffices to choose $h=0$ in this proof, but we need to choose $h$ large enough in the proof of uniqueness later). The straightforward computations yield that $0\le \ubar{u}\le \varepsilon$, and 
\[
-\Delta\ubar{u}=\left(\frac{\pi^2}{4(1-\delta)^2}+\delta^2\right)\ubar{u}\le f(\ubar{u})\quad \text{in } E_{h,\delta}.
\]
It is easy to see the outer normal derivative $\partial_{\bn}\ubar{u}\le0$ on $\partial E_{h,\delta}$. Therefore, for every $\phi\in C_c^1(E_\infty)$ with $\phi\ge0$, it holds that
\begin{align*}
\int_{E_\infty}\nabla\ubar{u}\cdot\nabla\phi\,dx=&\int_{E_{h,\delta}}\nabla\ubar{u}\cdot\nabla\phi\,dx=-\int_{E_{h,\delta}}\Delta\ubar{u}\phi\,dx+\int_{\partial E_{h,\delta}}\partial_{\bn}u\phi\,ds\\
\le&\int_{E_{h,\delta}}f(\ubar{u})\phi\,dx=\int_{E_{\infty}}f(\ubar{u})\phi\,dx.
\end{align*}
Hence $\ubar{u}$ is a weak subsolution to \eqref{stream function of type3} in $E_\infty$. By our construction, it is easily seen that $\ubar{u}\le \Bar{u}$ in $E_\infty$ provided $\varepsilon$ is small enough, where $\bar{u}$ is the solution of \eqref{1d solution} established in Lemma \ref{lemma of existence and uniqueness of 1d solution}.

Now for every $k$ with $k+1\ge h+\frac{\pi}{\delta}$, $\ubar{u}$ and $\Bar{u}$ are still sub- and supersolutions to \eqref{stream function of type3} in $E_{k+1}$. So there exists a weak solution $u_k\in H_0^1(E_{k+1})$ to \eqref{stream function of type3} in $E_{k+1}$ such that $\ubar{u}\le u_k\le \Bar{u}$ in $E_{k+1}$. By the standard regularity theory for elliptic equations (cf. \cite{GT}), $u_k$ is regular up to the boundary except possibly at the corners of $E_{k+1}$. Since $f$ is an odd function, the odd extension of $u_k$ with respect to $x_1$, denoted by $\Tilde{u}_k$, is a solution to \eqref{stream function of type3} in $\Omega_{k+1}$. Hence for any $\alpha\in(0,1)$, it follows again from the regularity theory for elliptic equations that
\[
\|\Tilde{u}_k\|_{C^{2,\alpha}(\overline{\Omega_k})}\le C,
\]
where $C$ is a constant depending on $\alpha$ but independent of $k$. By the Arzel\'a–Ascoli theorem and Cantor's diagonal argument, $u_k$ (up to a subsequence) converges to some limit $u$ in $C^2$ on any compact subsets of $\overline{\Omega_\infty}$. Hence $u\in C^2(\overline{\Omega_\infty})$ is a bounded solution to \eqref{stream function of type3} in $\Omega_\infty$ and $u$ is odd in $x_1$. Since $u\ge\ubar{u}\ge0$ in $E_\infty$ and $\ubar{u}$ is nontrivial, the strong maximum principle implies that $u$ must be positive in $E_\infty$. Therefore, $u$ solves \eqref{slee in a strip}. This completes the proof of the lemma.
\end{proof}

\begin{lemma}[Asymptotics]\label{lemma of asymptotics}
Let $u\in C^2(\overline{\Omega_\infty})$ be any bounded solution to \eqref{slee in a strip}. It holds that
\begin{align}\label{asymptotics of gradient u}
\lim_{x_1\rightarrow+\infty}(|u(x_1,x_2)-\Bar{u}(x_2)|+|\nabla u(x_1,x_2)-(0,\Bar{u}'(x_2))|)=0
\end{align}
{uniformly for }$x_2\in[-1,1]$.
\end{lemma}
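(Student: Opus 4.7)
The plan is to exploit the interior monotonicity of $u$ in $x_1$ to pass to a translation limit, identify the limit as the one--dimensional solution $\bar{u}$, and then upgrade pointwise convergence to the uniform $C^1$ convergence stated in \eqref{asymptotics of gradient u}.

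\textbf{Step 1: Monotonicity and a pointwise limit.} The restriction of $u$ to $E_\infty$ is a bounded positive solution of \eqref{stream function of type3}, so by Lemma~\ref{lem6.1} we have $\partial_{x_1}u>0$ in $E_\infty$. Consequently, for each fixed $x_2\in[-1,1]$ the map $x_1\mapsto u(x_1,x_2)$ is monotonically nondecreasing and bounded, so the limit
\[
u_\infty(x_2)\vcentcolon=\lim_{x_1\to+\infty}u(x_1,x_2)
\]
exists pointwise. Moreover, by monotonicity and the Hopf lemma applied at $\{x_1=0\}$ (which gives $\partial_{x_1}u(0,x_2)>0$ for $x_2\in(-1,1)$), we have $u_\infty(x_2)\ge u(1,x_2)>0$ for every $x_2\in(-1,1)$, while $u_\infty(\pm 1)=0$.

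\textbf{Step 2: Identification of the limit.} Set $u_n(x_1,x_2)\vcentcolon= u(x_1+n,x_2)$. Since $u$ is bounded and $f\in C^1$, standard interior and boundary Schauder estimates on rectangles $(-2,2)\times(-1,1)$ (using $u=0$ on the horizontal sides) give a uniform $C^{2,\alpha}$ bound on $u_n$ on any compact subset of $\overline{\Omega_\infty}$, independent of $n$. By Arzel\`a--Ascoli and a diagonal extraction, a subsequence of $u_n$ converges in $C^2_{loc}(\overline{\Omega_\infty})$ to some $u_\infty\in C^2(\overline{\Omega_\infty})$ solving $-\Delta u_\infty=f(u_\infty)$ in $\Omega_\infty$ with $u_\infty=0$ on $\partial\Omega_\infty$. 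The pointwise limit from Step~1 forces $u_\infty$ to be independent of $x_1$ and to coincide with the function $u_\infty(x_2)$ defined there. Hence $u_\infty$ is a positive solution of \eqref{1d solution}, and Lemma~\ref{lemma of existence and uniqueness of 1d solution} yields $u_\infty\equiv\bar{u}$.

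\textbf{Step 3: Uniform $C^1$ convergence.} Since every subsequence of $\{u_n\}$ admits a further subsequence converging in $C^2_{loc}(\overline{\Omega_\infty})$ to the same limit $\bar{u}$, the full sequence $u_n\to\bar{u}$ in $C^2_{loc}(\overline{\Omega_\infty})$. To eliminate passing to integer shifts and to get the limit as $x_1\to+\infty$ along reals, observe that the uniform $C^{2,\alpha}$ bound on $u$ over $[n,n+1]\times[-1,1]$, uniform in $n$, gives equicontinuity of $u$ and $\nabla u$ as functions of $x_1\in[0,\infty)$ with values in $C([-1,1])$. Combined with convergence along integer translates, this yields
\[
\lim_{x_1\to+\infty}\bigl(\|u(x_1,\cdot)-\bar{u}\|_{C([-1,1])}+\|\nabla u(x_1,\cdot)-(0,\bar{u}')\|_{C([-1,1])}\bigr)=0,
\]
which is \eqref{asymptotics of gradient u}.

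\textbf{Main difficulty.} The only nontrivial step is ensuring $u_\infty>0$ in $(-1,1)$, since a priori the limit could be the trivial solution. This is settled above by combining the strict monotonicity $\partial_{x_1}u>0$ (hence $u_\infty\ge u(1,\cdot)$) with the Hopf-boundary-point inequality at the symmetry axis $\{x_1=0\}$; alternatively one can slide the explicit positive subsolution $\ubar{u}(\,\cdot\,;h,\delta)$ from the construction in Lemma~\ref{lemma of existence} and invoke the sliding method to obtain $u\ge\ubar{u}(\,\cdot\,;h,\delta)$ for all large $h$, which also forces $u_\infty>0$.
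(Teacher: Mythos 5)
Your proposal is correct and follows essentially the same route as the paper: monotonicity in $x_1$ gives a pointwise limit $u_\infty(x_2)$, uniform $C^{2,\alpha}$ bounds on translates plus Arzel\`a--Ascoli identify $u_\infty$ as a positive solution of \eqref{1d solution}, and the uniqueness in Lemma~\ref{lemma of existence and uniqueness of 1d solution} forces $u_\infty\equiv\bar{u}$. The only cosmetic differences are that the paper uses continuous translates $u^{(t)}$ on $\overline{E_1}$ rather than integer shifts followed by an equicontinuity patch, and that the positivity of the limit is immediate from $u_\infty\ge u(1,\cdot)>0$ without needing the Hopf lemma at $\{x_1=0\}$.
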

\begin{proof}
First, the restriction of $u$ to $\overline{E_\infty}$ is a positive and bounded solution to \eqref{stream function of type3} in $E_\infty$. Thanks to \eqref{monotonicity in half strip}, there exists a bounded function, $u^\sharp(x_2)$, defined by 
\[
u^\sharp(x_2)=\lim_{x_1\rightarrow+\infty}u(x_1,x_2)\quad \text{for all }x_2\in[-1,1].
\]
Obviously, $u^\sharp$ is positive in $(-1,1)$ and $u^\sharp(\pm1)=0$.

For $t\ge0$, define $u^{(t)}(x)=u(x_1+t,x_2)$ for $x\in\overline{E_1}$. It follows from the regularity theory for elliptic equations that for any $\alpha\in(0,1)$, one has
\[
\sup_{t\ge0}\|u^{(t)}\|_{C^{2,\alpha}(\overline{E_1})}\le\|u\|_{C^{2,\alpha}(\overline{\Omega_\infty})}<\infty.
\]
Hence by the Arzel\'a–Ascoli theorem and the existence of $u^\sharp$,  $u^{(t)}$ converges to the function $u^\sharp$ in $C^2(\overline{E_1})$ as $t\rightarrow+\infty$. Since $u^{(t)}$ solves $$-\Delta u^{(t)}=f(u^{(t)})\quad \text{in }E_1,$$ letting $t\rightarrow+\infty$ implies that $u^\sharp$ is a positive solution to \eqref{1d solution}. It then follows from Lemma \ref{lemma of existence and uniqueness of 1d solution} that $u^\sharp \equiv\Bar{u}$. This completes the proof of the lemma.
\end{proof}

\begin{lemma}[Uniqueness]\label{lemma of uniqueness}
There exists a unique bounded solution $u\in C^2(\overline{\Omega_\infty})$ to \eqref{slee in a strip}. Moreover, this unique solution is odd in $x_1$.
\end{lemma}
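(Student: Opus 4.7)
The plan is to reduce uniqueness on $\Omega_\infty$ to uniqueness of positive bounded solutions of \eqref{stream function of type3} on the half-strip $E_\infty$, and then prove that via the sliding method in the $x_1$-direction.

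\emph{Reduction.} The sign condition $x_1 u > 0$ in \eqref{slee in a strip} forces any bounded $C^2(\overline{\Omega_\infty})$ solution $u$ to vanish continuously on $\{x_1 = 0\}$, so $u|_{E_\infty}$ is a positive bounded $C^2$ solution of \eqref{stream function of type3} on $E_\infty$. Conversely, since $f$ is odd, the odd reflection across $\{x_1 = 0\}$ of any such solution yields a $C^2$ solution of \eqref{slee in a strip}. Thus once we know the positive bounded solution $u_*$ on $E_\infty$ is unique, any bounded solution $v$ of \eqref{slee in a strip} satisfies $v|_{E_\infty} = u_*$; applying uniqueness on $E_\infty$ to the positive bounded solution $(x_1, x_2) \mapsto -v(-x_1, x_2)$ (which solves \eqref{stream function of type3} by oddness of $f$ together with the sign condition for $x_1 < 0$) gives $-v(-x_1, x_2) = u_*(x_1, x_2) = v(x_1, x_2)$, establishing simultaneously the uniqueness and the oddness in $x_1$.

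\emph{Uniqueness on $E_\infty$ via sliding.} Let $u, v$ be two positive bounded solutions of \eqref{stream function of type3} on $E_\infty$ and set $v^\tau(x_1, x_2) := v(x_1 + \tau, x_2)$ for $\tau \geq 0$. By Lemma \ref{lem6.1}, $\partial_{x_1} v > 0$, so $v^\tau$ is strictly increasing in $\tau$; by Lemma \ref{lemma of asymptotics}, $v^\tau \to \bar u$ and $u(x_1, \cdot) \to \bar u$ uniformly on compacts as $\tau, x_1 \to \infty$, and in particular $u, v < \bar u$ strictly in $E_\infty$. The first step is to show $v^\tau \geq u$ in $E_\infty$ for all sufficiently large $\tau$: on any compact $\overline{E_R}$, uniform convergence $v^\tau \to \bar u$ together with the strict inequality $u < \bar u$, the matching zeros on $\{x_2 = \pm 1\}$, and the strict positivity $v^\tau(0, \cdot) > 0 = u(0, \cdot)$ yield $v^\tau \geq u$ on $\overline{E_R}$ for $\tau$ large; on the tail $\{x_1 > R\}$, both $u$ and $v^\tau$ lie within $\varepsilon$ of $\bar u$, and a comparison argument on the semi-infinite tail (whose Dirichlet data at $\{x_1 = R\}$ are arbitrarily small) propagates the inequality. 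Define $\tau^* := \inf\{\tau \geq 0 : v^\tau \geq u \text{ in } E_\infty\}$; by continuity and the upward closedness given by monotonicity of $v$ in $x_1$, $v^{\tau^*} \geq u$. Assume toward contradiction $\tau^* > 0$ and write $w := v^{\tau^*} - u \geq 0$, which satisfies a linear equation $-\Delta w = c(x) w$ with $c(x) := \int_0^1 f'(tu+(1-t)v^{\tau^*})\,dt$ bounded. Minimality of $\tau^*$ yields $\tau_n \uparrow \tau^*$ and $x_n \in E_\infty$ with $w(x_n) \to 0$. If (along a subsequence) $x_n \to x^0 \in \overline{E_\infty}$, then $w(x^0) = 0$ with $x^0 \notin \{x_1 = 0\}$ (since $w(0, \cdot) = v(\tau^*, \cdot) > 0$); an interior $x^0$ is excluded by the Hopf/strong maximum principle for $w \geq 0$ solving $-\Delta w = cw$, which would force $w \equiv 0$ and contradict the positive boundary value; and at $x^0 \in \{x_2 = \pm 1\}$ the boundary Hopf lemma applied to $w$ gives a strict normal-derivative sign incompatible with the equality of normal derivatives forced by the touching of $u$ and $v^{\tau^*}$. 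If instead $x_{1,n} \to \infty$, the uniform asymptotics yield $w \to 0$ on the tail and, for any small $\delta > 0$, $v^{\tau^* - \delta} - u \to 0$ there as well, while on $\overline{E_R}$ the strict monotonicity $\partial_{x_1} v > 0$ gives $v^{\tau^* - \delta} \geq u$ by a uniform margin; combining these shows $v^{\tau^* - \delta} \geq u$ globally for some small $\delta > 0$, contradicting the minimality of $\tau^*$. Hence $\tau^* = 0$, giving $v \geq u$; by symmetry $u \equiv v$.

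\emph{Main obstacle.} The most delicate point is the treatment of possible ``touching at infinity'' in the sliding step: since $u$ and every $v^\tau$ converge to the common profile $\bar u$, the sliding process might stall at $\tau^* > 0$ without producing a finite touching point to which a standard strong-maximum-principle or Hopf argument can be applied. Overcoming this requires coupling all three of the uniform asymptotic convergence (Lemma \ref{lemma of asymptotics}), the strict $x_1$-monotonicity (Lemma \ref{lem6.1}), and the uniqueness of the one-dimensional profile $\bar u$ (Lemma \ref{lemma of existence and uniqueness of 1d solution}) in order to convert the tail asymptotics into a strict global improvement of the sliding inequality past $\tau^*$. A secondary technical subtlety is the initialization step: propagating $v^\tau \geq u$ from $\overline{E_R}$ onto the tail $\{x_1 > R\}$ for $\tau$ large, where the Dirichlet data at $\{x_1 = R\}$ are small but not exactly zero, requires a careful comparison principle on the long rectangle rather than a direct application of a spectral bound in the infinite strip.
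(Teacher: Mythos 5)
Your reduction to the half-strip and the derivation of oddness via $x\mapsto -v(-x_1,x_2)$ are fine and coincide with the paper's, but the sliding argument you propose for uniqueness on $E_\infty$ has a genuine gap exactly at the point you flag as the main obstacle, and the ingredients you list do not close it. Twice --- in the initialization for large $\tau$ and in the passage from $\tau^*$ to $\tau^*-\delta$ --- you must convert ``$v^\tau-u\ge 0$ on $\{x_1=R\}$ and $v^\tau-u\to 0$ as $x_1\to\infty$'' into ``$v^\tau-u\ge 0$ on the tail $\{x_1>R\}$''. A bounded function that is nonnegative on the lateral boundary and tends to zero at infinity can still dip below zero inside the tail, so ``combining these shows $v^{\tau^*-\delta}\ge u$ globally'' is an assertion, not a proof. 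What is actually needed is a maximum principle for $-\Delta-c(x)$ in the semi-infinite strip, with $c=\int_0^1 f'(tv^\tau+(1-t)u)\,dt\approx f'(\bar u(x_2))$ there; this holds only because the principal eigenvalue of $-d^2/dx_2^2-f'(\bar u)$ on $(-1,1)$ is positive, which follows from $f'(s)<f(s)/s$, i.e.\ from the strict monotonicity of $f(s)/s$ in \eqref{fproperty2} (note that $\bar u$ is the principal eigenfunction of $-d^2/dx_2^2-f(\bar u)/\bar u$ with eigenvalue $0$). Your argument never invokes this hypothesis, yet it is the hypothesis that makes uniqueness true at all; without it the sliding can genuinely stall at some $\tau^*>0$. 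A secondary error: in the case $x_n\to x^0\in\{x_2=\pm1\}$ there is no ``equality of normal derivatives forced by the touching'' --- $w=v^{\tau^*}-u$ vanishes identically on $\{x_2=\pm1\}$, so Hopf's lemma ($\partial_{\bn}w<0$) is perfectly consistent and gives no contradiction by itself; one must instead play the resulting lower bound $w\ge c\,\mathrm{dist}(x,\partial\Omega_\infty)$ against the bound $v^{\tau^*}-v^{\tau_n}\le C(\tau^*-\tau_n)\,\mathrm{dist}(x,\partial\Omega_\infty)$, which comes from $\partial_{x_1}v=0$ on $\{x_2=\pm1\}$.

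For comparison, the paper avoids sliding entirely: it constructs, by the sub/supersolution method with the explicit subsolution $\ubar{u}(\cdot\,;h,\delta)$ pushed far enough to the right (using Lemma \ref{lemma of asymptotics}), a solution $w\le\min\{u,v\}$ on $E_\infty$, and then applies the Green identity $\int_{E_k}(w\Delta u-u\Delta w)\,dx=\int_{E_k}uw\bigl(\tfrac{f(w)}{w}-\tfrac{f(u)}{u}\bigr)\,dx$, whose boundary term at $x_1=k$ vanishes as $k\to\infty$ by the asymptotics; the strict monotonicity of $f(s)/s$ then forces $u=w=v$. That route uses the structural hypothesis exactly once and transparently, and needs none of the delicate tail comparisons. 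If you wish to keep the sliding approach, you must supply the half-strip maximum principle described above and repair the boundary-touching case.
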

\begin{proof}
The existence has been proved in Lemma \ref{lemma of existence}. 

Let $u,v$ be two bounded solutions to \eqref{slee in a strip}. Then $\min\{u|_{\overline{E_\infty}},v|_{\overline{E_\infty}}\}$ is a positive supersolution to \eqref{stream function of type3} in $E_\infty$. Recall the subsolution $\ubar{u}$ defined by \eqref{definition of subsolution}. In view of \eqref{asymptotics of gradient u}, one can choose $h$ so large that $\ubar{u}\le \min\{u|_{\overline{E_\infty}},v|_{\overline{E_\infty}}\}$ in $E_\infty$. Following the proof of Lemma \ref{lemma of existence}, there exists a bounded solution $w$ to \eqref{slee in a strip} such that 
\[
w(x)\le\min\{u(x),v(x)\}\quad\text{for all }x\in E_\infty.
\]
Now for every $k>0$, we have
\begin{align*}
\int_{-1}^{1}(w\partial_{x_1}u-u\partial_{x_1}w)(k,x_2)\,dx_2=\int_{E_k}(w\Delta u-u\Delta w)\,dx=\int_{E_k}uw\left(\frac{f(w)}{w}-\frac{f(u)}{u}\right)\,dx.
\end{align*}
Since $0<w\le u$ in $E_k$ and $\frac{f(s)}{s}$ is strictly decreasing, it holds that $$uw\left(\frac{f(w)}{w}-\frac{f(u)}{u}\right)\ge0.$$ Letting $k\rightarrow+\infty$ and recalling \eqref{asymptotics of gradient u} give
\begin{align*}
0=\int_{E_\infty}uw\left(\frac{f(u)}{u}-\frac{f(w)}{w}\right)\,dx.
\end{align*}
This implies $u=w$ in $E_\infty$. Similarly, $v=w$ in $E_\infty$, and thus, $u=v$ in $E_\infty$.

Now the functions $\tilde{u}(x)=-u(-x_1,x_2)$  and $\tilde{v}=-v(-x_1,x_2)$  are also bounded solutions to \eqref{slee in a strip}. By the previous step, we have 
\[
u(x)=v(x)=\tilde{u}(x)=-u(-x_1,x_2)= \tilde{v}(x)=-v(-x_1,x_2)\quad \text{for all }x\in E_\infty.
\]
Hence the lemma immediately follows.
\end{proof}

Incorporating \eqref{monotonicity in half strip}, \eqref{monotonicity in a quarter strip}, \eqref{symmetry in half strip}, \eqref{boundary velocity2}, Lemma \ref{lemma of asymptotics} and Lemma \ref{lemma of uniqueness}, we arrive at the following proposition.

\begin{pro}
There exists a unique bounded solution $u\in C^{2}(\overline{\Omega_\infty})$ to \eqref{slee in a strip}. Moreover, this unique solution satisfies that

{\rm (\romannumeral1)} $u(x_1,x_2)=-u(-x_1,x_2)=u(x_1,-x_2)$ for all $x\in\Omega_\infty$;

{\rm (\romannumeral2)} $\partial_{x_1}u>0$ in $\Omega_\infty$, and $\partial_{x_2}u>0$ in $(0,\infty)\times[-1,0)$;

{\rm (\romannumeral3)}
$\lim_{x_1\rightarrow+\infty}|\nabla u(x_1,x_2)-(0,\Bar{u}'(x_2))|=0$ uniformly in $x_2\in[-1,1]$.
\end{pro}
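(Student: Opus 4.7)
The plan is to assemble the final proposition directly from the lemmas already established in this subsection: Lemma \ref{lem6.1}, Lemma \ref{lemma of existence}, Lemma \ref{lemma of asymptotics}, and Lemma \ref{lemma of uniqueness}. There is no new analytical work required; the task is bookkeeping, combined with one invocation of the Hopf lemma to pin down the sign of $\partial_{x_1} u$ on the axis $\{x_1=0\}$.

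First, existence of a bounded solution $u\in C^2(\overline{\Omega_\infty})$ to \eqref{slee in a strip} is Lemma \ref{lemma of existence}, and uniqueness is Lemma \ref{lemma of uniqueness}. Moreover, the construction in Lemma \ref{lemma of existence} produces a solution that is odd in $x_1$ (via the odd reflection step after solving in $E_{k+1}$), and by uniqueness this must be \emph{the} solution; hence $u(x_1,x_2)=-u(-x_1,x_2)$, giving the first half of (i). Next, the sign condition $x_1 u>0$ built into \eqref{slee in a strip} ensures that the restriction $u|_{\overline{E_\infty}}$ is a \emph{positive}, bounded, $C^2(\overline{E_\infty})$ solution of \eqref{stream function of type3} with $\Omega=E_\infty$, so the hypotheses of Lemma \ref{lem6.1} are met. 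Applying that lemma yields the $x_2$-symmetry \eqref{symmetry in half strip} on $\overline{E_\infty}$, the monotonicity \eqref{monotonicity in half strip} in $E_\infty$, and \eqref{monotonicity in a quarter strip} in $(0,\infty)\times(-1,0)$. Combining the $x_2$-evenness of $u$ on $\overline{E_\infty}$ with the already-established oddness of $u$ in $x_1$ propagates $u(x_1,-x_2)=u(x_1,x_2)$ to all of $\Omega_\infty$, completing (i).

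For the full-strip monotonicity in (ii), oddness of $u$ in $x_1$ makes $\partial_{x_1} u$ even in $x_1$, so \eqref{monotonicity in half strip} extends $\partial_{x_1} u>0$ from $E_\infty$ to $(-\infty,0)\times(-1,1)$. On the axis $\{x_1=0\}\cap\Omega_\infty$, where $u\equiv 0$ while $u>0$ immediately to the right, the Hopf lemma applies (as noted in the remark after Lemma \ref{lem6.1}, using $f(0)=0$ by oddness of $f$; cf.\ \eqref{boundary velocity2}) and yields $\partial_{x_1} u(0,x_2)>0$ for $x_2\in(-1,1)$. This gives $\partial_{x_1} u>0$ throughout $\Omega_\infty$, and \eqref{monotonicity in a quarter strip} gives the second statement of (ii) directly. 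Finally, (iii) is an immediate consequence of Lemma \ref{lemma of asymptotics}, which in fact delivers the stronger $C^1$ convergence $|u-\bar{u}|+|\nabla u-(0,\bar{u}')|\to 0$ uniformly on $[-1,1]$. The only step deserving even a momentary pause is verifying that the restriction to $\overline{E_\infty}$ is genuinely \emph{positive} in $E_\infty$ so that Lemma \ref{lem6.1} applies, but this is guaranteed by the sign condition in \eqref{slee in a strip} itself; the substantive obstacles (sub/supersolution construction, the $f(s)/s$ monotonicity argument for uniqueness, and the moving-plane input from \cite{BCN,BN}) have all been handled in the preceding lemmas.
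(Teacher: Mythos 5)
Your proposal is correct and matches the paper's own (very terse) proof, which simply assembles \eqref{monotonicity in half strip}, \eqref{monotonicity in a quarter strip}, \eqref{symmetry in half strip}, \eqref{boundary velocity2}, Lemma \ref{lemma of asymptotics} and Lemma \ref{lemma of uniqueness} in exactly the way you describe. One small precision: the closed endpoint in $\partial_{x_2}u>0$ on $(0,\infty)\times[-1,0)$ is not covered by \eqref{monotonicity in a quarter strip} alone (which is stated on the open quarter-strip), but follows from the Hopf-lemma inequality $\partial_{x_2}u(x_1,-1)>0$ contained in \eqref{boundary velocity2}, which you already invoke for the axis $\{x_1=0\}$.
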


Finally, assuming $f$ is smooth and introducing $\bv=\nabla^\perp u$, we prove the following proposition.
\begin{pro}
There exists a bounded solution $\bv\in C^\infty(\overline{\Omega_\infty})$ to the Euler system in $\Omega_\infty$ satisfying $\Theta(\bv, \overline{\Omega_\infty})=S^1_+$ and the following properties. 
    \begin{enumerate}
\item[(a)] $v_1(-x_1,x_2)=v_1(x_1,-x_2)=-v_1(x_1,x_2)$ for all $x\in\overline{\Omega_\infty}$, and $v_1<0$ in $(0,\infty)\times[-1,0)$;

\item[(b)] $v_2(-x_1,x_2)=v_2(x_1,-x_2)=v_2(x_1,x_2)$ for all $x\in\overline{\Omega_\infty}$, and $v_2>0$ in $\Omega_\infty$;

\item[(c)] $\lim_{x_1\rightarrow+\infty}|\bv(x_1,x_2)-(-\Bar{u}'(x_2),0)|=0$ uniformly in $x_2\in[-1,1]$.
\end{enumerate}
\end{pro}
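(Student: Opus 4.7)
The plan is to set $\bv\vcentcolon=\nabla^\perp u=(-\partial_{x_2}u,\partial_{x_1}u)$, where $u$ is the unique bounded solution to \eqref{slee in a strip} supplied by the preceding proposition. Since $f\in C^\infty$ by assumption and $u\in C^2(\overline{\Omega_\infty})$ satisfies $-\Delta u=f(u)$, a standard elliptic bootstrap upgrades $u$ to $C^\infty(\overline{\Omega_\infty})$, hence $\bv\in C^\infty(\overline{\Omega_\infty})$. Setting $P\vcentcolon=F(u)-\tfrac12|\nabla u|^2$ with $F'=f$, a direct computation shows that $(\bv,P)$ solves \eqref{Euler} in $\Omega_\infty$; $\Div\bv=0$ is automatic from $\bv=\nabla^\perp u$. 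The slip boundary condition $v_2=\partial_{x_1}u=0$ on $\partial\Omega_\infty$ follows from $u\equiv0$ there. Boundedness of $\bv$ comes from the standard global gradient estimate for bounded solutions of $-\Delta u=f(u)$ with $f$ bounded. Properties (a)--(c) are obtained by differentiating the corresponding statements of the previous proposition: the $x_1$-oddness and $x_2$-evenness of $u$ become, respectively, $v_1(-x_1,x_2)=-v_1(x_1,x_2)$, $v_1(x_1,-x_2)=-v_1(x_1,x_2)$, $v_2(-x_1,x_2)=v_2(x_1,x_2)$, and $v_2(x_1,-x_2)=v_2(x_1,x_2)$; the strict signs $\partial_{x_1}u>0$ in $\Omega_\infty$ and $\partial_{x_2}u>0$ in $(0,\infty)\times[-1,0)$ translate directly into the sign conditions on $v_1,v_2$; and (c) is a verbatim rewriting of the uniform convergence of $\nabla u$ to $(0,\bar u'(x_2))$.

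The substantive step is the identification $\Theta(\bv;\overline{\Omega_\infty})=S^1_+$. The inclusion $\Theta(\bv;\overline{\Omega_\infty})\subset S^1_+$ is immediate because $v_2=\partial_{x_1}u\ge0$ everywhere (with equality only on $\partial\Omega_\infty$). For the reverse inclusion, I would first obtain the endpoints $\pm\bi$ from the boundary: applying the Hopf lemma to the positive solution $u$ of $-\Delta u=f(u)$ in $E_\infty$ (using $f(0)=0$ from oddness) gives $\partial_{x_2}u(x_1,1)<0$ and $\partial_{x_2}u(x_1,-1)>0$ for $x_1>0$, so $v_1(x_1,1)>0$ and $v_1(x_1,-1)<0$ while $v_2\equiv0$ on $\partial\Omega_\infty$; this yields the directions $\bi$ and $-\bi$. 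For every intermediate direction $(\cos\theta,\sin\theta)\in S^1_+$ with $\theta\in(0,\pi)$, fix any $x_1^0>0$ and consider the vertical segment $\{x_1^0\}\times[-1,1]$. On it, $v_2>0$ in the interior and vanishes at $x_2=\pm1$, while $v_1$ is continuous and changes from a positive value at $x_2=1$ to a negative value at $x_2=-1$. Hence the unit vector $\bv/|\bv|$ traces a continuous path in $S^1_+$ running from $\bi$ through the open upper semicircle to $-\bi$, and the intermediate value theorem produces a point on the segment realizing the prescribed direction.

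The only delicate point is confirming that the horizontal directions $\pm\bi$ genuinely lie in $\Theta(\bv;\overline{\Omega_\infty})$, since they are attained exclusively on the closed boundary where $v_2$ vanishes; this is exactly what the Hopf lemma delivers, using that $f(0)\ge 0$. Everything else is a direct translation of the qualitative information on $u$ already assembled in the preceding proposition.
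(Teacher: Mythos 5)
Your proposal is correct, and the bulk of it (setting $\bv=\nabla^\perp u$ with $P=F(u)-\tfrac12|\nabla u|^2$, bootstrapping regularity, and reading off (a)--(c) from the symmetry, monotonicity, and asymptotic statements of the preceding proposition) is exactly what the paper does; the paper states this proposition as an immediate corollary of the previous one and does not write out a separate proof. Where you genuinely diverge is in establishing $\Theta(\bv;\overline{\Omega_\infty})=S^1_+$. The paper's route (made explicit in the parallel half-plane construction in Subsection 6.2) is to observe that $v_2=\partial_{x_1}u\ge0$ forces $\Theta(\bv;\overline{\Omega_\infty})\subset S^1_+\subsetneqq S^1$, that $\bv$ is not a shear flow, and then to invoke the classification in Theorem \ref{rigidity in a strip} (\romannumeral2) to conclude that the set of flow angles must be exactly $S^1_+$. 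You instead give a direct argument: the Hopf lemma (applicable since $f(0)=0\ge0$ by oddness) yields $v_1(x_1,1)>0$ and $v_1(x_1,-1)<0$ for $x_1>0$, so $\pm\bi$ are attained at non-stagnant boundary points, and on any vertical segment $\{x_1^0\}\times[-1,1]$ with $x_1^0>0$ the vector $\bv$ is nonvanishing and confined to the closed upper half-plane, so the continuous angle runs from $\pi$ to $0$ and the intermediate value theorem fills in the open upper arc. Your version is more elementary and self-contained — it does not rest on the total-curvature machinery of Sections 4--5 underlying Theorem \ref{rigidity in a strip} — at the cost of being specific to this explicit construction; the paper's route is shorter given that the classification theorem is already available and simultaneously certifies that the example is of Type \thr\ in the sense of that theorem. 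Both arguments are sound.
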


\begin{figure}[h]
\centering
\includegraphics[height=6cm, width=14cm]{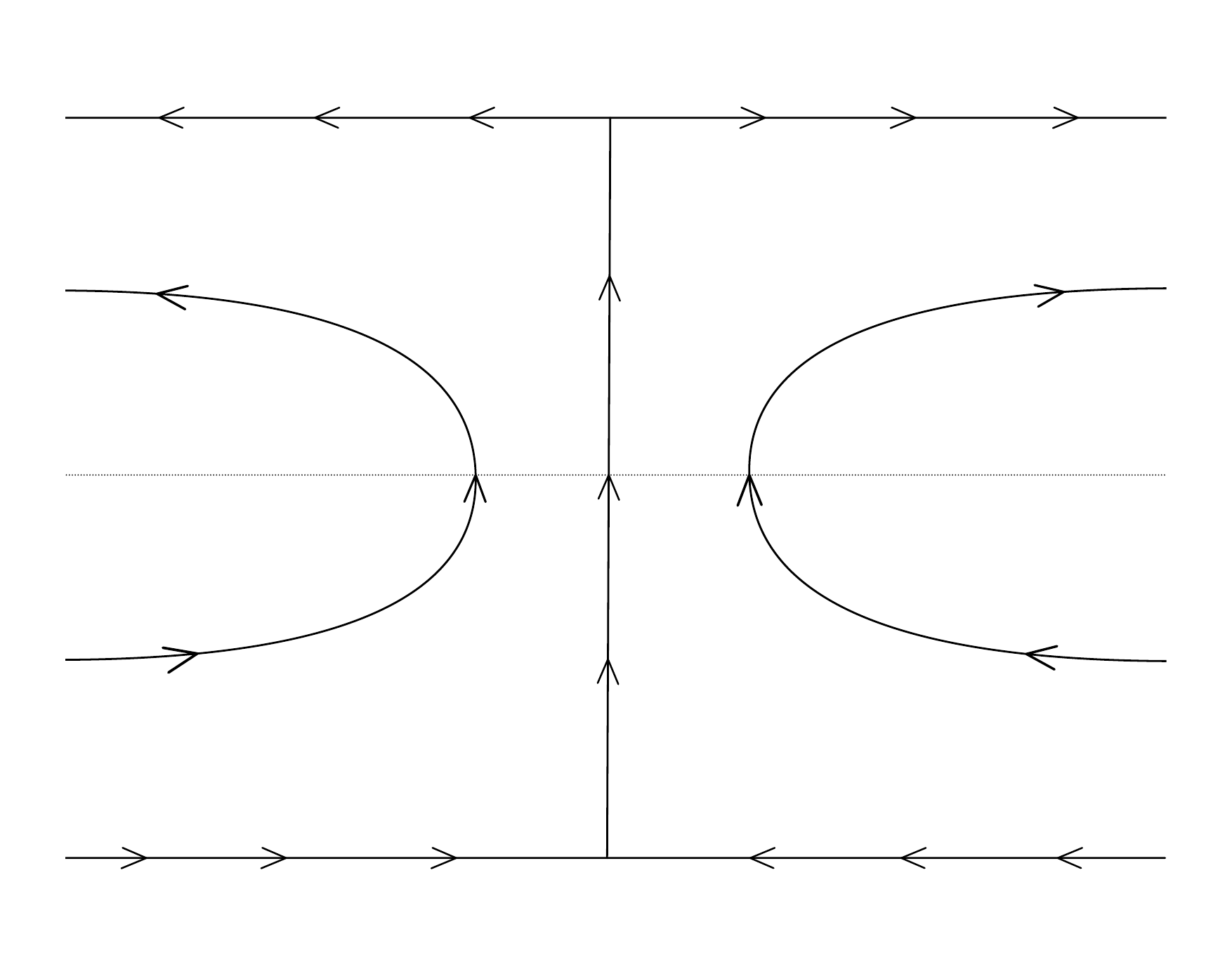}
\caption{Type {\thr} flow in $\Omega_\infty$}
\label{figure of type three flow in strip}
\end{figure}

This Type {\thr} flow exhibits streamlines shown in Figure \ref{figure of type three flow in strip}. There are exactly two stagnation points located at $(0,1)$ and $(0,-1)$.


\medskip 

{\bf Acknowledgement.}
The research of Gui is supported by University of Macau research grants  CPG2024-00016-FST, SRG2023-00011-FST, UMDF Professorial Fellowship of Mathematics, and Macao SAR FDCT 0003/2023/RIA1. 
 The research of  Xie is partially supported by  NSFC grants 12250710674 and 11971307, the Fundamental Research Funds for the Central Universities, Natural Science Foundation of Shanghai 21ZR1433300,  and Program of Shanghai Academic Research Leader 22XD1421400.

\bigskip

\bibliographystyle{plain}

\end{document}